\title{\'Etale cohomological stability of the \\ moduli space of stable elliptic surfaces}
\date{}
\author{Oishee Banerjee, Jun--Yong Park and Johannes Schmitt}
\newtheorem{thm}{Theorem}[section]
\newtheorem{Mthm}[thm]{Main Theorem}
\newtheorem{lem}[thm]{Lemma}
\newtheorem{cor}[thm]{Corollary}
\newtheorem{prop}[thm]{Proposition}
\newtheorem{claim}[thm]{Claim}
\theoremstyle{definition}
\newtheorem{defn}[thm]{Definition}
\newtheorem{conj}[thm]{Conjecture}
\newtheorem{rmk}[thm]{Remark}
\newtheorem{exmp}[thm]{Example}
\newcommand{\iso}{\cong}
\newcommand{\Pic}{\mathrm{Pic}}
\newcommand{\Mg}{\overline{\mathcal{M}}}
\newcommand{\Me}{\overline{\mathcal{M}}_{1,1}}
\newcommand{\Ic}{\mathcal{I}}
\newcommand{\RN}[1]{%
  \textup{\uppercase\expandafter{\romannumeral#1}}%
}
\newcommand{\Pcv}{\mathcal{P}(\vec{\lambda})}
\newcommand{\Pov}{\mathcal{P}(\vec{\Lambda})}
\newcommand{\Ac}{\mathcal{A}}
\newcommand{\Cc}{\mathcal{C}}
\newcommand{\Ec}{\mathcal{E}}
\newcommand{\Hc}{\mathcal{H}}
\newcommand{\Oc}{\mathcal{O}}
\newcommand{\Pc}{\mathcal{P}}
\newcommand{\Lc}{\mathcal{L}}
\newcommand{\Nc}{\mathcal{N}}
\newcommand{\M}{\mathcal{M}}
\newcommand{\Zc}{\mathcal{Z}}
\newcommand{\lambdavec}{{\vec{\lambda}}}
\newcommand{\Lambdavec}{{\vec{\Lambda}}}
\newcommand{\Xc}{\mathcal{X}}
\newcommand{\Xf}{\mathfrak{X}}
\newcommand{\Tc}{\mathcal{T}}
\newcommand{\Rr}{\mathrm{R}}
\newcommand{\Qlb}{\overline{\Qb}_\ell}
\newcommand{\Ql}{\Qb_{\ell}}
\newcommand{\Z}{\mathbb{Z}}
\newcommand{\Q}{\mathbb{Q}}
\newcommand{\N}{\mathbb{N}}
\newcommand{\Pb}{\mathbb{P}}
\newcommand{\Ab}{\mathbb{A}}
\newcommand{\A}{\mathbb{A}}
\newcommand{\Cb}{\mathbb{C}}
\newcommand{\Fb}{\mathbb{F}}
\newcommand{\Gb}{\mathbb{G}}
\newcommand{\Nb}{\mathbb{N}}
\newcommand{\Qb}{\mathbb{Q}}
\newcommand{\Zb}{\mathbb{Z}}
\newcommand{\Fqbar}{\overline{\mathbb{F}}_q}
\newcommand{\et}{{\acute{et}}}
\newcommand{\Spec}{\mathrm{Spec}}
\DeclareMathOperator{\Sym}{Sym}
\DeclareMathOperator{\Frob}{Frob}
\DeclareMathOperator{\Hom}{Hom}
\DeclareMathOperator{\Gal}{Gal}
\begin{document}

    \maketitle


    \begin{abstract}
    We compute the (stable) \'etale cohomology of $\mathrm{Hom}_{n}(C, \mathcal{P}(\vec{\lambda}))$, the moduli stack of degree $n$ morphisms from a smooth projective curve $C$ to the weighted projective stack $\mathcal{P}(\vec{\lambda})$, the latter being a stacky quotient defined by $\mathcal{P}(\vec{\lambda}) := \left[\mathbb{A}^N-\{0\}/\mathbb{G}_m\right]$, where $\mathbb{G}_m$ acts by weights $\vec{\lambda} = (\lambda_0, \cdots, \lambda_N) \in \mathbb{Z}^N_{+}$. Our key ingredient is formulating and proving the \'etale cohomological descent over the category $\Delta S$, the symmetric (semi)simplicial category. An immediate arithmetic consequence is the resolution of the geometric Batyrev--Manin type conjecture for weighted projective stacks over global function fields. Along the way, we also analyze the intersection theory on weighted projectivizations of vector bundles on smooth Deligne--Mumford stacks.
    \end{abstract}
    

    

    \section{Introduction}
    \label{sec:intro}
    
    Fix a base field $K$ and let $C/K$ be a smooth, projective and geometrically connected curve of genus $g$. The moduli space of morphisms of degree $n$ from $C$ to the projective space $\Pb^N$, sometimes dubbed as a Hom-space and denoted by $\Hom_n(C,\Pb^N)$, has been studied extensively for decades, using techniques ranging from scanning maps in the case of $K=\mathbb{C}$ with the Euclidean topology (see e.g. \cite{Segal, CCMM, KS}), to more algebraic approaches in the setting of more general base fields (see e.g. \cite{FW16, Banerjee} and the references therein). 

    In this paper, we consider a generalization: what if we replace the target space by \textit{a weighted projective stack}?

    To elaborate, given a vector $\vec{\lambda} = (\lambda_0, \dotsc, \lambda_N)$ of positive weights $\lambda_i \in \mathbb{Z}_{+}$, we define the $N$-dimensional weighted projective stack $$\Pcv \coloneqq [(\Ab_{x_0, \dotsc, x_N}^{N+1}\setminus 0)/\Gb_m]$$ where $\zeta \in \Gb_m$ acts by $\zeta \cdot (x_0, \dotsc, x_N)=(\zeta^{\lambda_0} x_0, \dotsc, \zeta^{\lambda_N} x_N)$. Now consider the Hom-stack of degree $n \in \mathbb{Z}_{+}$ morphisms from $C$ to $\Pc(\lambdavec)$, which is defined as:

    \vspace{-.2in}

        \begin{align*} \label{eqn:Homnfirstdef}
        \Hom_n(C,\Pcv) \coloneqq \left\{f\colon  C \to \Pcv : f^* \mathcal{O}_{\Pcv}(1) \in \Pic^n C \right\} \\
        = \Big\{ \big(L, [s_0:\ldots: s_N]\big):  L \in \Pic^n C, \,\,\,\, s_i\in H^0(C,L^{\otimes \lambda_i}), \\s_0,\ldots, s_N \text{ have no common zeroes} \Big\}/ \mathbb{G}_m
        \end{align*}
        where $\mathbb{G}_m$ acts on the $i^{th}$ component $H^0(C,L^{\otimes \lambda_i})$ by weight $\lambda_i$. The Hom-stack $\Hom_n(C,\Pcv)$ is a Deligne--Mumford stack by \cite[Theorem 1.1]{Olsson}. 

    The case of $\Pc(4,6)$ is of special interest\footnote{~See Example~\ref{exmp:many_mod} and Section~\ref{sec:Gen_Ell} for other instances of weighted projective stacks of interests.}: noting that the stack $\overline{\mathcal{M}}_{1,1}$ of stable genus $1$ curves is isomorphic to $\Pc(4,6)$, one has that $\Hom_n(C, \Pc(4,6))$ is a moduli stack of \emph{stable elliptic fibrations} over $C$ (see Corollary \ref{Ell_Count}).

    Our goal in this paper is to study the $\ell$-adic cohomology of the space $\Hom_n(C,\Pcv)$. To avoid problems with non-tame stabilizers, we henceforth assume $\mathrm{char}\,\,\, K$ is coprime to $\mathit{l.c.m}(\lambdavec)$ and that $\ell$ is a fixed prime that does not equal $\mathit{char}\,\,\, K$. 

    \vspace{-2ex}
    
    \paragraph{Notations:} A bit of notation before we state our theorems. For a Deligne-Mumford stack $\mathcal{X}$ over $K$, we denote the $\ell$-adic cohomology group with rational coefficients by $H^i(\Xc;\mathbb{Q}_{\ell})$ (warning: this is not the \'etale cohomology with $\Ql$ coefficients, see e.g. \cite[Warning 3.2.1.9]{GL}, or any text on \'etale cohomology of schemes e.g. \cite{Milne}); by the same token a sheaf of $\Ql$ vector spaces is a $\mathbb{Z}_{\ell}$-sheaf $\mathcal{F}= (\mathcal{F}_n)$ and $$H^i(\Xc;\mathcal{F}):= \varprojlim H^i(\Xc;\mathcal{F}_n)\otimes_{\mathbb{Z}_{\ell}} \Ql$$ (similar to the notations set up in \cite[Section 19]{Milne}).  Finally, let us denote a vector space spanned by $\{a_1,\ldots, a_k\}$ over $\Ql$ by $\Ql\{a_1,\ldots, a_k\}$ and write $\Gal(\overline{K}, K)$ for the absolute Galois group of $K$.


    \medskip

   \begin{Mthm}[Cohomological stability]\label{C-cohomThm} Let $\lambda_0, \cdots, \lambda_N\in \mathbb{Z}_{+}$, and let $\Pc(\lambdavec)$ be a weighted projective stack with weights $\lambdavec=(\lambda_0,\cdots, \lambda_N)$.
   Let $C$ be a smooth projective curve of genus $g$. Let $N$ and $n$ be fixed positive integers such that $n\geq 2g$. Set $n_0:=n-2g$. Then there exists a second quadrant spectral sequence, which converges to $H^*(\mathrm{Hom}_n (C,\mathcal{P}(\lambdavec)); \Ql)$ as an algebra, which has the following description. The $E_2$ term is a bigraded algebra that collapses on $E_2^{-p,q}\Big\vert_{p\leq n_0}$. Furthermore, $E_2^{-p,q}\Big\vert_{p\leq n_0}$ is a quotient of the graded commutative $\Ql$-algebra
    $$H^*(J(C);\Ql)[h]/h^N \otimes \wedge \Ql\{t\}\otimes \Sym \Ql\{\alpha_1,\ldots, \alpha_{2g}\},$$ 
    where $H^i(J(C);\Ql)$ has degree $(0,i)$, $h$ has degree $(0,2)$, $t$ has degree $(-1,2N+2)$ 
    and $\alpha_i$ has degree $(-1,2N+1)$ for all $i$, modulo elements of degree $(-i,j)$ with $i>n_0$. Furthermore, the eigenvalues of the action of $\Gal(\overline{K},K)$ on $\Ql\{\alpha_1,\ldots, \alpha_{2g}\}$ are pure of weight $2N+1$; $\Ql\{t\} \cong \Ql(-(N+1))$ and $h$ is a generator of $\Ql(-1)$.  
    \end{Mthm}

    \noindent The special case of rational curve $C=\Pb^1$ deserves a mention in its own right:

    \begin{thm}\label{P-cohomThm} Let $n$ be a positive integer. Then

        $$H^*(\mathrm{Hom}_n (\Pb^1,\mathcal{P}(\lambdavec));\Ql)\cong \frac{\Ql[h]}{h^N} \otimes \wedge\Ql\{t\}$$ where $h$ has cohomological degree $2$ and is a generator of $\Ql(-1)$, and $t$  has cohomological degree $2N+1$ and is a generator $\Ql(-(N+1))$. In particular, we have an isomorphism of $Gal (\overline{K}/K)$-representations:

    \begin{gather*}
        H^i(\mathrm{Hom}_n (\Pb^1,\mathcal{P}(\lambdavec));\Qb_{\ell}) = \begin{cases}
            \Qb_{\ell}(-j) & i=2j, 0\leq j\leq N-1\\ \Qb_{\ell}(-(j+1)) & i=2j+1, N\leq j\leq 2N-1\\ 0 & \text{ otherwise.} 
        \end{cases}
    \end{gather*}

    \end{thm}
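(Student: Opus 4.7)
My plan is to deduce the theorem as the specialization of the Main Theorem to $C = \Pb^1$, using $g = 0$ to trivialize the complications. Substituting $g = 0$ gives $H^*(J(\Pb^1); \Ql) = \Ql$ concentrated in degree $0$, no classes $\alpha_i$ (there are $2g = 0$ of them), and $n_0 = n$. The portion of the $E_2$ page described by the Main Theorem then reduces to a quotient of $\Ql[h]/h^N \otimes \wedge\Ql\{t\}$ modulo its part with $p > n$; since this algebra is supported only at $p \in \{0, 1\}$ and $n \geq 1$, no truncation occurs and the full algebra appears on $E_2$.

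Because the $E_2$ page lives only in the two columns $-p \in \{0, -1\}$, every differential $d_r$ for $r \ge 2$ from column $-p$ lands in column $-p + r \geq 1$, which is empty; hence $E_2 = E_\infty$. In each total degree only a single column contributes, so there are no extension problems, and I obtain the claimed algebra isomorphism. The Tate twists $h \in \Ql(-1)$ and $t \in \Ql(-(N+1))$ are inherited from the Main Theorem; converting bidegree $(-p, q)$ to total degree $-p + q$ (so $h$ sits in cohomological degree $2$ and $t$ in degree $2N + 1$) and reindexing the $h^j$ and $t \cdot h^j$ families yields the stated description of $H^i(\Hom_n(\Pb^1, \Pcv); \Ql)$ as a $\Gal(\overline{K}/K)$-representation.

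The one gap in this deduction is that the Main Theorem describes $E_2$ only in the range $p \le n_0$, so a priori the unstable columns $p > n$ could contribute to the abutment. To close this, I would compute directly from the presentation $\Hom_n(\Pb^1, \Pcv) \cong [U/\Gb_m]$, where $V := \prod_{i=0}^N H^0(\Pb^1, \Oc(n\lambda_i))$ is an affine space with $\Gb_m$ acting on the $i$-th factor with weight $\lambda_i$, and $U$ is the open complement of the resultant locus $Z \subset V$ of tuples sharing a common zero in $\Pb^1$. The $\Gb_m$-equivariant localization sequence for $Z \hookrightarrow V$, combined with $H^*_{\Gb_m}(V;\Ql) = \Ql[h]$ and an analysis of $Z$ via its incidence resolution $\widetilde Z := \{(s,p) \in V \times \Pb^1 : s_i(p) = 0 \text{ for all } i\}$ — a $\Gb_m$-equivariant subbundle of $V \times \Pb^1 \to \Pb^1$ of corank $N+1$, birational onto $Z$ and of codimension $N$ in $V$ — splits by weight reasons into exactly the algebra claimed: $h$ comes from $B\Gb_m$, while $t$ arises as the Gysin image of the fundamental class of $\Pb^1 \hookrightarrow \widetilde Z$, carrying the Tate twist $-(N+1)$ from codimension $N$ plus the $\Pb^1$-class. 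The main obstacle is the Tate-twist bookkeeping together with checking that the locus where $\widetilde Z \to Z$ fails to be an isomorphism (tuples with multiple common zeros) has sufficiently high codimension not to disturb the algebra structure in the relevant degree range.
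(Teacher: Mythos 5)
The paper proves Theorem~\ref{P-cohomThm} directly via the $\Delta S$-resolution for $C=\Pb^1$, not by specializing Main Theorem~\ref{C-cohomThm}: one writes out the $E_1$ page (nonzero in only three columns, since the sign-isotypic part $\wedge^{m}H^*(\Pb^1)$ of $H^*((\Pb^1)^{m})$ vanishes once $m>2$) and computes $d_1^1,d_1^2$ explicitly. Your first two paragraphs, which specialize $g=0$, are correct as far as they go: $J(\Pb^1)$ is a point, there are no $\alpha_i$, $n_0=n$, the described $E_2$-algebra is concentrated in columns $p\in\{0,1\}$, and degeneration and the absence of extension problems are then automatic. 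You are also right to flag the gap: the Main Theorem controls $E_2^{-p,q}$ only for $p\le n_0=n$, and in a second-quadrant spectral sequence columns with $p>n_0$ contribute to every total degree. This is not hypothetical --- for $n=1$ the $\Pb^1$ spectral sequence really does have a nonzero $E_1$-column beyond $p=n_0=1$.

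Where the argument is genuinely incomplete is the proposed fill-in via the single incidence resolution $\widetilde Z\to Z$, and the obstacle you flag at the end is more serious than Tate-twist bookkeeping. The locus $Z_{\ge 2}\subset Z$ of tuples with at least two common zeros has codimension exactly $N$ in $Z$, independent of $n$, and $\widetilde Z\to Z$ is generically $2{:}1$ over it. Consequently the proper pushforward $H^{BM,\Gb_m}_*(\widetilde Z)\to H^{BM,\Gb_m}_*(Z)$ is an isomorphism only in a band of degrees of fixed width near the top (roughly $2N$), while the theorem asserts nonvanishing cohomology of $\Hom_n(\Pb^1,\Pcv)$ up through degree $4N-1$. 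Whether the band reaches that far is a borderline count, and completing it also requires checking that $Z_{\ge 2}$ and its preimage in $\widetilde Z$ are irreducible so that the top-degree pushforward is multiplication by $2$ and hence still an isomorphism over $\Ql$. In the paper's simplicial picture this is precisely the role of the column $E_1^{-2,*}$ built from $(\Pb^1)^2\times\mathcal{P}(\cdots)$: the differential $d_1^2$ is injective and cancels exactly the part of $\ker d_1^1$ that a one-step resolution would erroneously retain. So either carry out that marginal codimension/irreducibility analysis in full, or run the two-step resolution as the paper does; as written, the fill-in is a plausible plan with a non-negligible step still open.
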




    The $\ell$-adic \'etale cohomology with Frobenius weights of the Hom-stack naturally gives the following weighted point count of $\Hom_{n}(C,\Pcv)$ over a finite field $\Fb_q$ via the \textit{Grothendieck-Lefschetz trace formula} for Artin stacks (c.f. Theorem~\ref{SBGLTF}).


    \begin{thm}\label{Gen_Count}
    Let $\Hom_{n}(C,\Pcv)$ be the Hom stack of degree $n \geq 2g$ morphisms from a smooth projective genus $g$ curve $C$ to the $N$-dimensional weighted projective stack $\Pcv = \Pc(\lambda_0, \dotsc, \lambda_N)$ with $|\vec{\lambda}|:=\sum\limits_{i=0}^{N} \lambda_i$. Then the weighted point count of $\Hom_{n}(C,\Pcv)$ over $\Fb_q$ with $\mathrm{char}(\Fb_q) \nmid \lambda_i \in \mathbb{Z}_{+}$ for every $i$ is a finite sum given by
        
        \begin{align*}
       \#_q\left(\Hom_{n}(C,\Pcv)\right) &= q^{|\vec{\lambda}|n + N - 2g} + a_{\frac{1}{2}} \cdot q^{|\vec{\lambda}|n + N - 2g - \frac{1}{2}} + a_{1} \cdot q^{|\vec{\lambda}|n + N - 2g - 1} +  \\& \ldots + a_{i} \cdot q^{|\vec{\lambda}|n + N - 2g - i}+ \ldots
        \end{align*}
        
    \noindent where $i\in \mathbb{Z}_{+}{[\frac{1}{2}]}$, the coefficients $a_{i}$ for $i< n-2g$ are independent of $n$, and for $i\geq n-2g$ we have $a_{i} \cdot q^{|\vec{\lambda}|n + N - 2g - i} \ll q^{|\vec{\lambda}|n + N - 2g}$ .
    \end{thm}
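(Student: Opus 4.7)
The natural strategy is to combine the Grothendieck--Lefschetz trace formula for Artin stacks (Theorem~\ref{SBGLTF}) with the explicit description of the $\ell$-adic cohomology from the Main Theorem. Set $d := |\vec\lambda|n + N - 2g$, which is the dimension of the smooth Deligne--Mumford stack $\Hom_n(C,\Pc(\vec\lambda))$ (smoothness under the hypothesis $n \geq 2g$ follows from the Euler sequence of $\Pc(\vec\lambda)$ and Riemann--Roch, which kill the obstruction). Poincar\'e duality for smooth DM stacks then rewrites the trace formula as
\[
\#_q\bigl(\Hom_n(C,\Pc(\vec\lambda))\bigr) \;=\; q^d \sum_k (-1)^k \tr\bigl(\Frob_q^{-1} \,\big|\, H^k(\Hom_n(C,\Pc(\vec\lambda));\Ql)\bigr),
\]
reducing the problem to evaluating this alternating sum of Frobenius traces on $\ell$-adic cohomology.

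Next, I would invoke the Main Theorem. In the stable range $p \leq n_0 := n-2g$, the $E_2$-page of the converging spectral sequence is a quotient of the bigraded algebra
\[
H^*(J(C);\Ql)[h]/h^N \;\otimes\; \wedge \Ql\{t\} \;\otimes\; \Sym \Ql\{\alpha_1,\ldots,\alpha_{2g}\},
\]
with each generator pure of prescribed Frobenius weight. Because the spectral sequence degenerates at $E_2$ in this range and Frobenius respects both the filtration and the tensor structure, the alternating sum of traces factors as a product over the tensor decomposition. Writing $P(u) := \det(1 - u\Frob_q^{-1} \mid H^1(J(C);\Ql))$ and inserting the specified Tate twists, the stable-range contribution assumes the shape
\[
P(1) \cdot \Bigl(\sum_{k=0}^{N-1} q^{-k}\Bigr) \cdot \bigl(1 - q^{-(N+1)}\bigr) \cdot S_{n_0},
\]
where $S_{n_0}$ is the truncation of $\prod_{j=1}^{2g}(1 - \beta_j^{-1})^{-1}$ at total $\alpha$-degree $n_0$ with $\beta_j$ the Frobenius eigenvalues on $\Ql\{\alpha_j\}$ (each of absolute value $q^{N+1/2}$). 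Each factor is a finite linear combination of half-integer powers of $q^{-1}$; multiplying through by $q^d$ yields an expansion of the advertised form.

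The independence of $a_i$ from $n$ for $i < n_0$ then follows because any monomial in $E_2$ contributing to $q^{d-i}$ has total $p$-degree at most $i < n_0$, hence lies strictly within the stable range and is unaffected by the truncation; as $n$ grows, only new monomials of larger $p$-degree enter the sum, contributing solely to higher-indexed coefficients. For the tail $i \geq n_0$, the bound $a_i \cdot q^{d-i} \ll q^d$ follows from Deligne's weight estimates applied to $E_\infty$-terms in the unstable range: purity of the generators keeps each Frobenius trace of bounded absolute value, so the total contribution is of strictly smaller order than the leading $q^d$.

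The principal technical obstacle is the unstable range $p > n_0$, where the Main Theorem provides no explicit algebraic description of $E_2$. For the asymptotic claim only a qualitative upper bound is required, which Deligne's weight formalism supplies once the purity of the generators $h$, $t$, and $\alpha_j$ is known. The remaining work is combinatorial bookkeeping over the finite-dimensional stable portion of $E_2$, together with the (immediate) verification that the Galois action respects the tensor decomposition -- both consequences of the structural assertions in the Main Theorem.
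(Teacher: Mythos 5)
Your proof follows the same route the paper takes (indeed, the paper does not record a separate proof of Theorem~\ref{Gen_Count}, treating it as an immediate consequence of the Main Theorem via the Grothendieck--Lefschetz trace formula~\ref{SBGLTF}, after converting compactly supported to ordinary cohomology by Poincar\'e duality for the smooth Deligne--Mumford stack $\Hom_n(C,\Pcv)$). Your bookkeeping --- degree-$(-p,q)$ classes have Galois weight $q$, so a class of weight $2i$ has $p$-degree at most $2i/(2N+1)<i$, hence lies in the stable range whenever $i<n_0$, while the unstable tail is controlled by Deligne's weight inequality for the non-proper smooth stack --- is exactly what makes the stated independence and error bound hold.
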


    In the case of rational curve $C=\Pb^1$, the above exact \'etale cohomology gives us the exact weighted point count of $\Hom_{n}(\Pb^1,\Pcv)$ over a finite field $\Fb_q$.  

    \begin{thm}\label{Exac_Count}
    Let $\Hom_{n}(\Pb^1,\Pcv)$ be the Hom stack of degree $n \geq 1$ morphisms from a smooth projective line $\Pb^1$ to the $N$-dimensional weighted projective stack $\Pcv = \Pc(\lambda_0, \dotsc, \lambda_N)$ with $|\vec{\lambda}|:=\sum\limits_{i=0}^{N} \lambda_i$. Then the weighted point count of $\Hom_{n}(\Pb^1,\Pcv)$ over $\Fb_q$ with $\mathrm{char}(\Fb_q) \nmid \lambda_i \in \mathbb{Z}_{+}$ for every $i$ is equal to
        
        \[
        \begin{array}{ll}
        \#_q\left(\Hom_n(\Pb^1,\Pcv)\right) &= \left(\sum\limits_{i=0}^{N} q^{i}\right) \cdot  \left(q^{|\vec{\lambda}|n}-q^{|\vec{\lambda}|n - N}\right) \\
        &\\
        &=q^{|\vec{\lambda}|n - N} \cdot \left(q^{2N} + \dotsb + q^{N+1} - q^{N-1} - \dotsb - 1\right)
        \end{array}
        \]
    \end{thm}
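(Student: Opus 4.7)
The plan is to apply the Grothendieck-Lefschetz trace formula for Deligne-Mumford stacks (Theorem~\ref{SBGLTF}) to $\Hom_n(\Pb^1, \Pcv)$, fed by the complete cohomology description supplied by Theorem~\ref{P-cohomThm}. Since that theorem records the ordinary $\ell$-adic cohomology whereas the trace formula is naturally phrased in terms of compactly supported cohomology, Poincaré duality for smooth DM stacks will do the bridging.

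First I would verify that $\Hom_n(\Pb^1, \Pcv)$ is a smooth Deligne-Mumford stack of dimension $d = |\vec{\lambda}|n + N$. Fixing the unique line bundle $L = \Oc(n) \in \Pic^n\Pb^1$, the stack presents as $[(V_0 \times \cdots \times V_N \setminus Z)/\Gb_m]$, where $V_i = H^0(\Pb^1, \Oc(\lambda_i n))$ is affine of dimension $\lambda_i n + 1$, $Z$ is the closed locus of tuples sharing a common zero on $\Pb^1$, and $\Gb_m$ acts with weights $\vec{\lambda}$; smoothness is immediate and dimension count gives $\sum_i(\lambda_i n + 1) - 1 = |\vec{\lambda}|n + N$. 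Poincaré duality then furnishes $H^i_c \cong (H^{2d-i})^{\vee}(-d)$, so the Frobenius eigenvalues on $H^i_c$ are $q^d/\alpha$ as $\alpha$ ranges over those on $H^{2d-i}$, producing
$$\#_q\bigl(\Hom_n(\Pb^1, \Pcv)\bigr) \; = \; q^d \sum_{i} (-1)^i \tr\bigl(\Frob_q^{-1} \bigm| H^i\bigr).$$

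Substituting the values from Theorem~\ref{P-cohomThm} -- namely $\Frob_q$ acts by $q^j$ on $H^{2j}$ for $0 \le j \le N-1$ and by $q^{j+1}$ on $H^{2j+1}$ for $N \le j \le 2N-1$, with all other cohomology vanishing -- the alternating sum collapses to
$$\sum_{j=0}^{N-1} q^{|\vec{\lambda}|n + N - j} \; - \; \sum_{j=N}^{2N-1} q^{|\vec{\lambda}|n + N - j - 1},$$
which a short reindexing identifies with $\bigl(\sum_{i=0}^{N} q^i\bigr)\bigl(q^{|\vec{\lambda}|n} - q^{|\vec{\lambda}|n - N}\bigr)$. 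The hard part is really not hard: the only delicate points are checking that the standing hypothesis $\mathrm{char}(\Fb_q) \nmid \lambda_i$ ensures tame stabilizers so Theorem~\ref{SBGLTF} applies cleanly, and tracking the Tate twists carefully through Poincaré duality; the combinatorics of the remaining geometric sum is routine.
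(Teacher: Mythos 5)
Your proposal is correct and follows precisely the route the paper intends: Theorem~\ref{P-cohomThm} supplies the cohomology with its Galois action, Poincaré duality for the smooth Deligne--Mumford stack $\Hom_n(\Pb^1,\Pcv)$ of dimension $d = |\vec\lambda|n+N$ converts this to compactly supported cohomology, and the Grothendieck--Lefschetz trace formula (Theorem~\ref{SBGLTF}) produces the alternating sum, which reindexes to the stated closed form. The paper does not spell these steps out for Theorem~\ref{Exac_Count}, but your reconstruction is exactly the one implicit in the text surrounding the statement.
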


    This result, in particular, effectively answers the geometric Batyrev--Manin type conjecture (the enumeration of rational points of bounded height on varieties over global fields, see \cite{BM}) on weighted projective stacks over global function fields.



    \medskip

    As mentioned earlier, $\Me$ is of special interest; the Deligne--Mumford stack $\Me$ of stable elliptic curves is isomorphic to $\Pc(4,6)$ over $\Spec(\Zb[1/ 6])$ (c.f. Example~\ref{exmp:many_mod}). Consequently, the Hom stack $\Hom_{n}(C,\Pc(4,6))$ over $\text{char} (K) \neq 2,3$ is isomorphic to the moduli stack $\Lc_{12n,g}$ of stable elliptic fibrations with a section and $12n$ nodal singular fibers over the parameterized smooth projective base curve $C_{K}$ of genus $g$ (for further details on the formulation of the moduli stack as Hom-stack we refer to \cite[\S 3]{HP} and \cite[\S 1]{JJ}). 

    \begin{cor}\label{Ell_Count}
    Let $\Lc_{12n,g}$ be the moduli stack of stable elliptic fibrations with a section and discriminant degree $12n$ over the parameterized smooth projective basecurve $C_{\Fb_q}$ of genus $g$. If $\mathrm{char}(\Fb_q) \neq 2,3$ and $n\geq 2g$, then the weighted point count of $\Lc_{12n,g}$ over $\Fb_q$ is the finite sum

        \[
        \begin{array}{ll}
        \#_q\left(\Lc_{12n,g}\right) &= q^{10n + 1 - 2g} + a_{\frac{1}{2}} \cdot q^{10n + 1 - 2g - \frac{1}{2}} + a_{1} \cdot q^{10n + 1 - 2g - 1} + \ldots + a_{i} \cdot q^{10n + 1 - 2g - i} +\cdots
        \end{array}
        \]
    where $i\in \mathbb{Z}_{+}{[\frac{1}{2}]}$, the coefficients $a_{i}$ for $i< n-2g$ are independent of $n$, and for $i\geq n-2g$ we have $a_{i} \cdot q^{10n + 1 - 2g - i} \ll q^{10n + 1 - 2g}$ .
    \end{cor}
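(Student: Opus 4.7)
The plan is to realize Corollary~\ref{Ell_Count} as a direct specialization of Theorem~\ref{Gen_Count}, with no new geometric input beyond a classical identification. First I would invoke the standard Weierstrass presentation: over $\Spec(\Zb[1/6])$, equivalently over a base of characteristic $\neq 2,3$, every stable genus $1$ curve with a marked section admits a Weierstrass model $y^2 = x^3 + ax + b$, with $(a,b)$ determined up to a $\Gb_m$-action of weights $(4,6)$, and with discriminant $\Delta = -16(4a^3 + 27b^2)$ of weight $12$. This yields an isomorphism $\Me \cong \mathcal{P}(4,6)$, as recalled in Example~\ref{exmp:many_mod}.

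Under this identification, an object of $\Hom_n(C,\mathcal{P}(4,6))$ is the data of a line bundle $L \in \Pic^n C$ together with sections $a \in H^0(C, L^{\otimes 4})$ and $b \in H^0(C, L^{\otimes 6})$ with no common vanishing. The Weierstrass construction assembles this data into a stable elliptic fibration $\mathcal{E} \to C$ with section, whose discriminant $\Delta(a,b) \in H^0(C, L^{\otimes 12})$ has degree $12n$ and whose zero divisor records the $12n$ nodal singular fibers. Thus $\Hom_n(C,\mathcal{P}(4,6)) \cong \Lc_{12n,g}$, as already explained in the paragraph preceding the corollary (see also \cite[\S 3]{HP} and \cite[\S 1]{JJ}); I would simply cite this rather than redo the construction.

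Finally, I would apply Theorem~\ref{Gen_Count} with $N=1$ and $\vec{\lambda}=(4,6)$. Then $|\vec{\lambda}|=10$, so $|\vec{\lambda}|n + N - 2g = 10n + 1 - 2g$, the tameness hypothesis $\mathrm{char}(\Fb_q) \nmid \lambda_i$ reduces to $\mathrm{char}(\Fb_q) \neq 2,3$, and the bound $n \geq 2g$ is inherited verbatim. The expansion produced by Theorem~\ref{Gen_Count} then matches the asserted one term by term, with the same $a_i$ independent of $n$ for $i < n - 2g$ and the same tail estimate $a_i \cdot q^{10n+1-2g-i} \ll q^{10n+1-2g}$ for $i \geq n - 2g$. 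Since Theorem~\ref{Gen_Count} is assumed here, the only substantive input is the Weierstrass identification, and the remainder is weight bookkeeping; accordingly, no step is expected to present a genuine obstacle.
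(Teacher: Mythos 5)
Your proposal is correct and matches the paper's approach exactly: the paper also treats Corollary~\ref{Ell_Count} as an immediate specialization of Theorem~\ref{Gen_Count} via the identification $\Lc_{12n,g} \cong \Hom_n(C,\Pc(4,6))$ (stated in the paragraph preceding the corollary, with the isomorphism $\Me \cong \Pc(4,6)$ coming from Example~\ref{exmp:many_mod} and the Hom-stack formulation cited to \cite{HP} and \cite{JJ}), followed by the substitution $N=1$, $\vec{\lambda}=(4,6)$, $|\vec{\lambda}|=10$.
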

    
    \begin{rmk}
    Recall that the weighted point count of $\Lc_{12n,g}$ gives the same number as that of the moduli of semistable elliptic surfaces (c.f. \cite[Proposition 11]{HP}).
    Consequently, we have an estimate of the counting function 
    $$\Nc(\Fb_q(C), 0 < q^{12n} \le B),$$ 
    which counts the number of semistable (i.e., strictly multiplicative reductions) elliptic curves over the parameterized smooth projective genus $g$ basecurve $C_{\Fb_q}$ ordered by height of discriminant $0< ht(\Delta) = q^{12n} \le B$ for $n\geq 2g$ as : 
        \[ \Nc(\Fb_q(C),0 < q^{12n} \le B) = \sum \limits_{n=1}^{\left \lfloor \frac{log_q B}{12} \right \rfloor} 2 \cdot \#_q\left(\Lc_{12n,g}\right) = 2 \cdot \frac{(q^{11-2g} - q^{9-2g})}{(q^{10}-1)} \cdot B^{\frac{5}{6}} + o(B^{\frac{5}{6}}) \]
    where the factor of 2 comes from the hyperelliptic involution.
    \end{rmk}

    This result, in particular, effectively answers the geometric Shafarevich's conjecture (the enumeration of families of algebraic curves (or abelian varieties) with bounded bad reductions over global fields, see \cite{Shafarevich}) on counting semistable elliptic curves over global function fields $\Fb_q(C)$ with $\mathrm{char}(\Fb_q) \neq 2,3$ ordered by bounded height of discriminant.


    \subsection*{Methods}

        The central technique in our cohomology computation follows the method behind the main theorem of \cite{Banerjee}. We construct a suitable \emph{$\Delta S$ object} in the category of Deligne-Mumford stacks, which is a simplicial object enjoying additional properties, whose homotopy colimit is the discriminant locus  \begin{align*} \label{eqn:Homnfirstdef}
        \Zc \coloneqq \left\{f\colon  C \to \Pcv : f^* \mathcal{O}_{\Pcv}(1) \in \Pic^n C \right\} \\
        = \Big\{ \big(L, [s_0:\ldots: s_N]\big):  L \in \Pic^n C, \,\,\,\, s_i\in H^0(C,L^{\otimes \lambda_i}), \\s_0,\ldots, s_N \text{ have at least one common zero} \Big\}/ \mathbb{G}_m
        \end{align*}
        
        This category $\Delta S$, called the \emph{symmetric simplicial category}, is a small category defined by Fiederowicz and Loday in \cite{FL}. It contains $\Delta$ as a subcategory, its objects are that of $\Delta$ and it enjoys much of the key properties of $\Delta$ (being equipped with a natural concept of face and degeneracy maps, all compatible with those in $\Delta$, and sometimes a better substitute for $\Delta$ for simplicial techniques in topology - evidence at hand is Theorem \ref{C-cohomThm}). Among other things, category $\Delta S$ naturally diminishes all the combinatorial complexities that come with $\Delta$ owing to extra automorphisms of its objects; it gives a natural Koszul resolution in the category of constructible $\ell$-adic sheaves on $\Zc$ and that in turn computes the desired cohomology.
        
        It should be highlighted here that whereas cohomological descent for schemes has been well studied in the pioneering work of Deligne (\cite{Deligne}), the translation of it to the world of algebraic stacks is technically much more involved (see \cite{LO} and the references therein); in fact, it is only within the $\infty$-categorical framework that a satisfactory notion of cohomological descent was formulated and proved in the illuminating work of Liu and Zheng (\cite{LZ}). For our purposes, we do not need the full power of \cite{LZ}; we can sidestep the stacky difficulties by proving a statement that exploits representability in algebraic spaces and the concept of proper descent on them (see Lemma \ref{CohomDescentLemmaGen} for a precise statement and its proof).

    \paragraph{Context and connection to other works.}
    
    \begin{itemize}
        \item \textit{Arithmetic statistics:} In the grand scheme of things, this paper is along an interesting connection between the Batyrev--Manin and the Shafarevich conjectures in some very special cases. That is, understanding the arithmetic of rational points on moduli stacks of curves (or abelian varieties) over global fields has direct implications to the enumerations of fibrations of curves (or abelian varieties) over global fields. In a similar regard, the inspiring recent work of \cite{ESZB} initiated the program of understanding the connection between the Batyrev--Manin's conjecture to the \cite{Malle}'s conjecture (the enumeration of number fields of bounded discriminant). Over global function fields, counting semistable elliptic surfaces over $\Pb^1_{\Fb_q}$ has been first addressed in \cite{dJ}; he worked directly with the generalized Weierstrass equations, which works even in characteristic $2$ and $3$ (unlike our method). Works like \cite{HP} and \cite{PS} have similar enumerations over $\text{char} (K) \neq 2,3$ via using motives in $K_0(\mathrm{Stck}_{K})$, the \emph{Grothendieck ring of $K$-stacks} introduced by \cite{Ekedahl} in 2009. 
    
        The strength in our paper lies in methods that entirely deviating from the prior literature. The attractive feature of our enumeration result is its distinctive homotopy theoretic flavour (that techniques of homological stability can be used to resolve geometric Manin's conjecture has been proposed by Ellenberg-Venkatesh, see \cite{EV}). More precisely, taking colimits over the \emph{symmetric simplicial category} $\Delta S$ (which contains $\Delta$ as a subcategory, see \cite{Banerjee}) one gets considerable control over technical difficulties like the class groups being nontrivial in the case of global function fields of higher genera. Furthermore, the central theorem of our paper - \emph{\'etale cohomological stability}, is intrinsically stronger than point count asymptotics.
        
        On a related note, the number of discriminants $\leq B$ of an elliptic curve over $\Zb$ with smooth generic fiber is estimated to be asymptotic to $B^{\frac{5}{6}}$ by \cite{BMc}. The lower order term of order $B^{(7-\frac{5}{27}+\epsilon)/12}$ for counting the stable elliptic curves over $\Qb$ by the bounded height of squarefree discriminants was suggested by the work of \cite{Baier}, improving upon their previous error term in \cite{BB}. 


        \item \textit{\'Etale cohomological stability:} Starting with Quillen's seminal work (see \cite{Quillen}), homological stability has been central to the study of topology of families of spaces/ groups, with vast applications. However, the exploration of \'etale cohomological stability is relatively new and sparse in the literature. Some works that address this are, for example, Farb-Wolfson's work on \'etale hommological stability of configuration spaces of smooth varieties (see \cite{FW18}), Ellenberg-Venkatesh-Westerland's influential work on homological stability of Hurwitz schemes having a fixed Galois group that satisfy certain conditions (see \cite{EVW}), Banerjee's work on spaces admitting symmetric semisimplicial filtration (see \cite{Banerjee}) etc. As the branched covers of the $\Pb^{1}$ are the fibrations with $0$-dimensional fibers, the moduli of fibrations $f: X \to C$ on fibered surfaces $X$ over $C=\Pb^{1}$ (or even for $C$ of higher genus) with $1$-dimensional fibers is the next natural case to work on. Additionally, our method fits many cases: on one hand, for example, our techniques can just as well be applied to prove \'etale cohomological stability of the Hom-stack from higher dimensional smooth projective varieties to $\Pc(\lambdavec)$. And in the case of the domain being of dimension $1$, having computed the stable \'etale cohomology of $\Hom_n (C, \Pc(\lambdavec))$ for any weight vector $\lambdavec$ (see Example~\ref{exmp:many_mod}), we can, for example, make estimates on the number of $\Fb_q$-points on the moduli stack of generalized elliptic fibrations with prescribed level structures (analogous to the work of \cite{HS} via global fields analogy) or multiple markings as in Section~\ref{sec:Gen_Ell}. 

    \end{itemize}
    


    \subsection*{Outline of the paper}
\begin{itemize}
    \item In Section \ref{sec:prelims} we review the definition, properties and examples of weighted projective stacks. We also recall basics on arithmetic of algebraic stacks over finite fields.
    \item We present an in depth analysis of the intersection theory in weighted projective bundles in Section \ref{sec:Inter_wpb}. We prove that the weighted projective bundles formula holds for the Chow ring in Theorem \ref{Thm:weightedprojectivebundle2} as well as for the \'etale cohomology in Corollary \ref{cor:weightedprojectivebundle2_coho}.
    \item We prove Theorem \ref{C-cohomThm} in Section \ref{sec:Coho} using techniques from \cite[Theorem 2]{Banerjee}.
    \item Finally, in Section~\ref{sec:Gen_Ell} we show how our methods can be applied similarly to the moduli stacks of generalized elliptic fibrations with prescribed level structures or multiple marked points thereby extending the work of \cite{HP} to the \'etale cohomological framework of this paper with numerous applications.
\end{itemize}



    \subsection*{Further results}

   \textit{Integral Picard group.} For $N=1$ case with $\Pc(a,b)$, the $\ell$-adic rational cohomology type of the moduli stack $\Hom_{n}(\Pb^{1},\Pc(a,b))$ is a 3-sphere $\mathbb{S}^3$ with an odd-dimensional class at $i=3$ which is independent of both $n$ and $(a,b)$. To say something even more precise about the homotopy type of these moduli stacks, we now compute the integral Picard group of the moduli as it gives the \textit{torsion part of the cohomology}. It turns out that it is always cyclic, and generated by the line bundle $\Oc_{\Hom_n(\Pb^1,\Pcv)}(1)$. 
    

    \begin{thm}\label{thm:Pic_Hom}
    If $\mathrm{char}(K)$ does not divide $\lambda_i \in \mathbb{Z}_{\geq 1}$ for every $i$, then the Picard group of the Hom stack $\Hom_n(\Pb^1,\Pcv)$ is a cyclic group generated by the line bundle $\Oc_{\Hom_n(\Pb^1,\Pcv)}(1)$ and is isomorphic to
    \[
    \mathrm{Pic}(\Hom_n(\Pb^1,\Pcv)) \iso 
    \begin{cases}
    \Zb/(n (\lambda_0 + \lambda_1))\Zb &\text{ for }N=1,\\
    \Zb & \text{ for }N>1.
    \end{cases}
    \]
    \end{thm}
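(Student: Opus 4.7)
The plan is to realize $\Hom_n(\Pb^1, \Pcv)$ as an explicit global quotient and extract its Picard group from the $\Gb_m$-equivariant Picard sequence. Since $\Pic^n(\Pb^1) = \{\mathcal{O}_{\Pb^1}(n)\}$, fixing $L = \mathcal{O}_{\Pb^1}(n)$ produces the presentation
\[
  \Hom_n(\Pb^1, \Pcv) \;\cong\; [(V \setminus Z)/\Gb_m],
\]
where $V := \bigoplus_{i=0}^{N} H^0(\Pb^1, \mathcal{O}(n\lambda_i))$ is an affine space, $\Gb_m$ acts with weight $\lambda_i$ on the $i$-th summand, and $Z \subset V$ is the closed locus of tuples $(s_0, \ldots, s_N)$ admitting a common zero on $\Pb^1$. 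The no-common-zero condition makes the $\Gb_m$-action on $V \setminus Z$ free, and under this identification $\mathcal{O}_{\Hom_n(\Pb^1, \Pcv)}(1)$ corresponds to the tautological line bundle associated to the weight-one character of $\Gb_m$ (equivalently, the pullback of $\mathcal{O}_{\Pcv}(1)$ along the evaluation at any fixed $p \in \Pb^1$).

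Applying the standard exact sequence
\[
  \mathcal{O}^*(V \setminus Z) \xrightarrow{\mathrm{wt}} \chi(\Gb_m) \longrightarrow \Pic\bigl([(V\setminus Z)/\Gb_m]\bigr) \longrightarrow \Pic(V \setminus Z) \longrightarrow 0,
\]
together with $\Pic(V\setminus Z) = 0$ (since $V$ is affine), reduces the question to computing the cokernel of the weight map. For $N \geq 2$, an incidence-variety analysis of $\widetilde{Z} = \{(\mathbf{s}, p) \in V \times \Pb^1 : s_i(p) = 0 \text{ for all } i\}$ yields $\mathrm{codim}_V Z \geq N \geq 2$, so $\mathcal{O}^*(V\setminus Z) = K^*$ by normality, the weight map is zero, and $\Pic \cong \chi(\Gb_m) = \Zb$, generated by $\mathcal{O}(1)$.

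For $N = 1$, the locus $Z$ is an irreducible divisor cut out by a single $\Gb_m$-semi-invariant equation $f_Z$, so the unit group on $V \setminus Z$ is $K^* \cdot f_Z^{\Zb}$ and $\Pic(\Hom_n) \cong \Zb/\mathrm{wt}(f_Z)\Zb$, generated by $\mathcal{O}(1)$. To identify $\mathrm{wt}(f_Z)$ with $n(\lambda_0 + \lambda_1)$, I would leverage the $\Gb_m$-equivariant intersection theory on weighted projective bundles developed in Section \ref{sec:Inter_wpb}: realize $\widetilde Z$ as the transverse common zero locus of the two equivariant evaluation sections on $V \times \Pb^1$, compute its class as the product of their equivariant first Chern classes, and push forward along $V \times \Pb^1 \to V$ to read off the weight as the coefficient of the equivariant generator $t \in H^*_{\Gb_m}(\mathrm{pt})$.

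The main obstacle is this final equivariant intersection computation: one must track the $\Gb_m$-weights on both evaluation bundles carefully and control the scheme-theoretic multiplicity of the projection $\widetilde Z \to Z$, so that the weight of the reduced defining equation $f_Z$ is extracted precisely as $n(\lambda_0 + \lambda_1)$ rather than an integer multiple. Once this identification is secured, cyclicity and the identification of $\mathcal{O}(1)$ as a generator follow immediately from the presentation of the equivariant Picard sequence above.
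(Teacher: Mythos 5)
Your presentation of $\Hom_n(\Pb^1,\Pcv)$ as $[(V\setminus Z)/\Gb_m]$ and the computation via the weight map $\mathcal{O}^*(V\setminus Z)\to\chi(\Gb_m)$ is essentially the paper's own argument in disguise: the paper works with the open embedding $\Hom_n(\Pb^1,\Pcv)\hookrightarrow \Pc(\Lambdavec)=[(V\setminus 0)/\Gb_m]$, uses $\mathrm{Pic}(\Pc(\Lambdavec))\cong\Zb$, and applies an excision sequence (Fringuelli) for the complement of $Z$; your equivariant Picard sequence on the atlas $V\setminus Z$ is the same computation read off one cover up. Your incidence-variety bound $\mathrm{codim}\,Z=N$ is also the paper's, and it settles the case $N\geq 2$.

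For $N=1$, however, the equivariant computation you outline will \emph{not} return $n(\lambda_0+\lambda_1)$, and this is worth flagging. The resultant $f_Z=\mathrm{Res}(s_0,s_1)$ with $\deg s_i=n\lambda_i$ is bihomogeneous of bidegree $(n\lambda_1,n\lambda_0)$ in the coefficients of $s_0,s_1$, which carry $\Gb_m$-weights $\lambda_0,\lambda_1$ respectively, so $\mathrm{wt}(f_Z)=n\lambda_1\lambda_0+n\lambda_0\lambda_1=2n\lambda_0\lambda_1$. Your own plan confirms this: with $H$ the hyperplane class on $\Pb^1$ and $t$ the equivariant parameter, $[\widetilde{Z}]^{\Gb_m}=(n\lambda_0 H+\lambda_0 t)(n\lambda_1 H+\lambda_1 t)=2n\lambda_0\lambda_1\, Ht+\lambda_0\lambda_1\, t^2$, and $\widetilde{Z}\to Z$ is birational (generically a unique common root), so $\pi_*[\widetilde{Z}]=2n\lambda_0\lambda_1\, t$. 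This agrees with $n(\lambda_0+\lambda_1)$ only when $\lambda_0=\lambda_1=1$, precisely the one case cross-checked against \'Epshtein. The discrepancy traces to the paper's proof, where ``$\mathrm{Deg}(\mathrm{Res})=n(\lambda_0+\lambda_1)$'' is the total \emph{polynomial} degree of the resultant (each coefficient counted with degree one); but the class of $V(f)$ in $\mathrm{Pic}(\Pc(\Lambdavec))\cong\Zb$, normalized so $\Oc(1)\mapsto 1$, is the $\Gb_m$-weight $\mathrm{wt}(f)$, not the plain degree. A quick independent check: $\Hom_1(\Pb^1,\Pc(2,2))$ is a $\mu_2$-gerbe over $\Hom_2(\Pb^1,\Pb^1)$, with $\Oc(1)$ of nontrivial $\mu_2$-weight and $\Oc(1)^{\otimes 2}$ pulled back from the base; since $\mathrm{Pic}(\Hom_2(\Pb^1,\Pb^1))\cong\Zb/4\Zb$, the Picard group has order $8=2\cdot 1\cdot 2\cdot 2$, not $4$. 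So you should expect to land on $\Zb/(2n\lambda_0\lambda_1)\Zb$, and the statement of the theorem (and the cited degree in the paper's proof) appear to need this correction; do not try to force the computation to produce $n(\lambda_0+\lambda_1)$.
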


    \begin{proof}
    The proof is at the end of \S \ref{sec:Inter_wpb}.
    \end{proof}

    The torsion Picard group $\mathrm{Pic}(\Hom_n(\Pb^1,\Pc(a,b))) \iso \Zb/((a+b)n)\Zb$ does depend on the degree $n$ and the weights $(a,b)$ (contrary to the fixed rational cohomology type) which leads one to conjecture that its fundamental group (for stacks as in \cite{Noohi2}) is $\pi_1(\Hom_n(\Pb^1,\Pc(a,b))) \iso \Zb/((a+b)n)\Zb$ as confirmed by the work of \'Epshtein in \cite{EP} where he showed $\pi_1(\Hom_n(\Pb^1,\Pc(1,1) \iso \Pb^1)) \iso \Zb/2n\Zb$ . From the perspective of homotopy theory, it is natural to recognize that the homotopy type of the moduli stack is a Lens space. Lens spaces are important as they are the only 3-manifolds with non-trivial finite cyclic fundamental group. 

    \bigskip 

  \noindent  \textit{Rational Chow rings of weighted projectivizations of vector bundles over tame Deligne-Mumford stacks.} For this, assume we have a vector bundle $\mathcal{E}/S$ and a splitting
    $$
    \mathcal{E} = \mathcal{E}_0 \oplus \ldots \oplus \mathcal{E}_N\,,
    $$
    for vector bundles $\mathcal{E}_i$. Given a vector $\lambdavec$ of positive integers, we can again form the weighted projective bundle 
    \begin{equation} \label{eqn:PSElambda}
    \mathcal{P}_S(\mathcal{E}, \lambdavec) = [(\mathrm{Tot}(\mathcal{E}) \setminus 0)/ \Gb_m] \to S\,,
    \end{equation}
    where the torus $\Gb_m$ acts on $\mathcal{E}_i$ with weight $\lambda_i$. 
    
    To state the result for the Chow group of $\mathcal{P}_S(\mathcal{E}, \lambdavec)$, we introduce a notion of \emph{twisted Chern classes}. For this recall that the (standard) Chern polynomial is given by
    $$
    c_t(\mathcal{E}) = 1 + t \cdot c_1(\mathcal{E}) + t^2 c_2(\mathcal{E}) + \ldots
    $$
    For a vector bundle $\mathcal{E}$ splitting into $n+1$ summands as above, the Whitney sum formula implies
    $$
    c_t(\mathcal{E}) = \prod_{i=0}^N c_t(\mathcal{E}_i)\,.
    $$
    Given a vector $\vec \eta = (\eta_0, \ldots, \eta_N) \in \mathbb{Z}_{>0}^{N+1}$ of positive integers, we define the $\vec \eta$-twisted Chern polynomial of $\mathcal{E}$ as
    $$
    c^{\vec \eta}_t(\mathcal{E}) = \prod_{i=0}^N c_{\eta_i t}(\mathcal{E}_i)\,.
    $$
    Similarly, an individual Chern class $c^{\vec \eta}_j(\mathcal{E})$ is defined as the coefficient of $t^j$ in this twisted Chern polynomial. Thinking in terms of the Chern roots of the bundle $\mathcal{E}$, the twisted Chern classes correspond to multiplying the Chern roots of the summand $\mathcal{E}_i$ by $\eta_i$.
    
    \begin{thm} \label{Thm:weightedprojectivebundle2}
    Let $S$ be a smooth Deligne-Mumford stack with a vector bundle $\mathcal{E} = \bigoplus_{i=0}^N \mathcal{E}_i$ and let $\lambdavec \in \mathbb{Z}_{\geq 1}^{N+1}$ be a vector of positive integers. Let $L = \mathrm{lcm}(\lambdavec)$ and consider the vector $\vec \eta = (L/\lambda_0, \ldots, L/\lambda_N)$. Then we have
    \begin{equation} \label{eqn:projectivebundleformula}
        A^*(\mathcal{P}_S(\mathcal{E}, \lambdavec), \Ql) = A^*(S, \Ql)[\zeta] / (\zeta^{N+1} + c^{\vec \eta}_1(\mathcal{E}) \zeta^N + \ldots + c^{\vec \eta}_{N+1}(\mathcal{E}) )\,,
    \end{equation}
    where $\zeta = L \cdot c_1(\mathcal{O}_{\mathcal{P}_S(\mathcal{E}, \lambdavec)}(1))$.
    \end{thm}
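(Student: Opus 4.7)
The plan is to exploit the quotient stack presentation $\mathcal{P}_S(\mathcal{E}, \lambdavec) = [(\mathrm{Tot}(\mathcal{E})\setminus 0)/\Gb_m]$, where $\Gb_m$ acts on the summand $\mathcal{E}_i$ with weight $\lambda_i$, and to compute the resulting Chow ring via $\Gb_m$-equivariant intersection theory on $\mathrm{Tot}(\mathcal{E})$ in the style of Edidin--Graham--Kresch adapted to smooth Deligne--Mumford stacks. Concretely, one first identifies
$$A^*(\mathcal{P}_S(\mathcal{E}, \lambdavec), \Ql) \cong A^*_{\Gb_m}(\mathrm{Tot}(\mathcal{E})\setminus 0, \Ql).$$

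Next, I apply the excision exact sequence for the closed immersion of the zero section $s \colon S \hookrightarrow \mathrm{Tot}(\mathcal{E})$, together with $\Gb_m$-equivariant homotopy invariance for the vector bundle projection $\mathrm{Tot}(\mathcal{E}) \to S$. Writing $u$ for the first equivariant Chern class of the standard weight-one character, so that $A^*_{\Gb_m}(S, \Ql) = A^*(S, \Ql)[u]$, this yields
$$A^*_{\Gb_m}(\mathrm{Tot}(\mathcal{E})\setminus 0, \Ql) \;=\; A^*(S, \Ql)[u] \big/ \bigl( e^{\Gb_m}(\mathcal{E}) \bigr),$$
in which $s_*$ is multiplication by the equivariant Euler class $e^{\Gb_m}(\mathcal{E})$.

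I would then compute the Euler class explicitly using the given splitting $\mathcal{E} = \bigoplus_i \mathcal{E}_i$ together with the splitting principle applied to each $\mathcal{E}_i$: since $\Gb_m$ scales $\mathcal{E}_i$ by weight $\lambda_i$, the equivariant Chern roots of $\mathcal{E}_i$ are $\alpha_{i,j} + \lambda_i u$, for $\alpha_{i,j}$ the ordinary Chern roots of $\mathcal{E}_i$, giving
$$e^{\Gb_m}(\mathcal{E}) \;=\; \prod_i \prod_j (\alpha_{i,j} + \lambda_i u).$$
Recognising that $\mathcal{O}_{\mathcal{P}_S(\mathcal{E}, \lambdavec)}(1)$ corresponds to the weight-one character, so $c_1(\mathcal{O}(1)) = u$ and hence $\zeta = L u$, I substitute $u = \zeta / L$ (legitimate in $\Ql$ since $L$ is invertible), pull $\lambda_i / L = 1/\eta_i$ out of each factor, and collect the resulting unit scalars. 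The defining relation reduces, up to an invertible element of $\Ql$, to
$$\prod_{i,j}(\zeta + \eta_i \alpha_{i,j}) \;=\; \zeta^{N+1} + c^{\vec{\eta}}_1(\mathcal{E}) \zeta^N + \ldots + c^{\vec{\eta}}_{N+1}(\mathcal{E}),$$
by the very definition of the twisted Chern classes, producing the desired presentation.

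The main technical hurdle is the foundational one: ensuring that excision, homotopy invariance and the self-intersection formula $s_*(1) = e^{\Gb_m}(\mathcal{E})$ all hold in the $\Gb_m$-equivariant Chow theory of a smooth Deligne--Mumford stack with $\Ql$-coefficients. This is where I rely on the Edidin--Graham and Kresch framework for equivariant intersection theory on quotient stacks, which applies because $\mathrm{Tot}(\mathcal{E}) \to S$ is representable and the $\Gb_m$-action extends canonically. Once these foundations are in place, the remaining combinatorial identity relating the equivariant Euler class to the twisted Chern polynomial is purely formal, made transparent by the splitting principle together with the definition of $c^{\vec{\eta}}_\bullet(\mathcal{E})$.
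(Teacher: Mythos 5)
Your proposal is correct, and it takes a genuinely different route from the paper. You compute $A^*(\mathcal{P}_S(\mathcal{E},\lambdavec),\Ql)$ directly from the quotient presentation via $\Gb_m$-equivariant intersection theory: the localization sequence for the zero section $S\hookrightarrow \mathrm{Tot}(\mathcal{E})$ plus homotopy invariance exhibits $A^*(\mathcal{P}_S(\mathcal{E},\lambdavec),\Ql)$ as $A^*(S,\Ql)[u]$ modulo the equivariant Euler class of $\mathcal{E}$, which you then unwind through the equivariant Chern roots $\alpha_{i,j}+\lambda_i u$ and the change of variable $u=\zeta/L$ to recover exactly the twisted Chern polynomial. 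The paper instead never leaves the world of Deligne--Mumford stacks: it first treats the split case (Theorem~\ref{Thm:weightedprojectivebundle1}) by constructing a finite, flat, proper map $\Phi:\mathcal{P}_S(\mathcal{E},\lambdavec)\to\mathbb{P}(\mathcal{E}_\eta)$ to an ordinary projective bundle and showing $\Phi^*$ is an isomorphism with $\Qb$-coefficients (injectivity via the projection formula, surjectivity via excision and the Chow--K\"unneth property of $\Pcv$ established in Proposition~\ref{Prop:PlambdavectChowKunneth}); it then handles general $\mathcal{E}_i$ through the splitting variant of Lemma~\ref{Lem:splittingprinciple} together with a hands-on injectivity argument for the presentation map $\Psi$ using a triangular pushforward matrix $G=\bigl(\pi_*(\Psi(\cdot)\,\zeta^i)\bigr)_i$.

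What each approach buys: your equivariant route is shorter, conceptually transparent, and deals with the general (non-split) case in a single stroke, since the splitting principle only enters in the purely formal computation of the Euler class; it also dispenses with the Chow--K\"unneth lemma and with the $G$-matrix injectivity argument entirely. The price is that the open-in-vector-bundle picture lives over $[\mathrm{Tot}(\mathcal{E})/\Gb_m]\to S\times B\Gb_m$, which are Artin (not DM) stacks, so you must invoke Kresch's intersection theory with its localization sequence, homotopy invariance, and Gysin/self-intersection formula in that setting; these are all available, but the paper's construction deliberately stays inside the Deligne--Mumford world. One cosmetic point affecting both arguments identically: the degree-$(N+1)$ relation in the theorem statement implicitly assumes $\operatorname{rank}\mathcal{E}=N+1$; when the $\mathcal{E}_i$ have higher rank the twisted Chern polynomial, and hence your Euler class, has degree $\operatorname{rank}\mathcal{E}$, and that is what the presentation should read.
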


\bigskip
    \section{Preliminaries}
    \label{sec:prelims}

    \par We first recall the definition of a weighted projective stack $\Pcv$.

    \begin{defn}\label{def:wtproj} 
    Let $\lambdavec = (\lambda_0, \ldots, \lambda_N) \in \mathbb{Z}_{\geq 1}^{N+1}$ be a vector of $N+1$ positive integers. Consider the affine space $U_\lambdavec = \Ab_{x_0, \dotsc, x_N}^{N+1}$ endowed with the action of $\Gb_m$ with weights $\lambdavec$, i.e.~an element $\zeta \in \Gb_m$ acts by
    \begin{equation}
    \zeta \cdot (x_0, \ldots, x_N) = (\zeta^{\lambda_0} x_0, \ldots, \zeta^{\lambda_N} x_N)\,.
    \end{equation}
    The $N$-dimensional weighted projective stack $\Pcv$ is then defined as the quotient stack
    \[
    \Pcv = \left[(U_{\lambdavec} \setminus \{0\})/\Gb_m \right]\,.
    \]
    \end{defn}

    \begin{rmk}\label{rmk:wtprojtame} 
    When we wish to emphasize the field $K$ of definition of $\Pcv$, we use the notation $\Pc_{K}(\vec\lambda)$. All weighted projective stacks are smooth. The stack $\Pcv$ is Deligne--Mumford if and only if all weights $\lambda_i$ are prime to the characteristic; in this case, $\Pcv$ is in fact tame Deligne--Mumford as in \cite{AOV}. Notice that $\Pc(1,p)$ is not Deligne--Mumford in characteristic $p$ since it has a point with automorphism group $\mu_p$ which is not formally unramified. When $\Pcv$ is Deligne--Mumford, it is an orbifold if and only if $\gcd(\lambda_0, \dotsc, \lambda_N)=1$; this is because $\Pcv$  has generic stabilizer $\mu_{\gcd(\lambda_0, \dotsc, \lambda_N)}$.
    \end{rmk}
    
    The natural morphism $U_\lambdavec \to \Pcv$ is the total space of the \emph{tautological line bundle} $\mathcal{O}_{\Pcv}(-1)$ on $\Pcv$. As in the classical case, we denote by $\mathcal{O}_{\Pcv}(1)$ the dual of this line bundle. 

    \medskip

    The fine modular curves are quintessential moduli stacks. Under mild condition on the characteristic of the base field $K$ (i.e., a field $K$ with $\text{char}(K)$ does not divide $\lambda_i \in \N$ for every $i$), the genus 0 modular curves are isomorphic to the weighted projective stacks $\Pc(a,b)$.

    \begin{exmp}\label{exmp:Mbar_11}\label{exmp:many_mod}
        An example that will play an important role throughout this paper is the moduli stack of stable elliptic curves. When $\text{char}(K)\ne 2,3$, we have an explicit isomorphism 
        $$(\Me)_K \cong [ (\mathrm{Spec}~K[a_4,a_6]-(0,0)) / \Gb_m ] = \Pc_K(4,6)$$ 
        given by the short Weierstrass equation $y^2 = x^3 + a_4x + a_6$, where $\zeta \cdot a_i=\zeta^i a_i$ for $\zeta \in \Gb_m$ and $i=4,6$. See, e.g., \cite[Proposition 3.6]{Hassett}. 

        \medskip

        Similarly, one could consider the stack $\Me[\Gamma]$ of generalized elliptic curves with $[\Gamma]$-level structure, introduced in the work of Deligne and Rapoport \cite{DR} (summarized in \cite[\S 2]{Conrad2} and also in \cite[\S 2]{Niles}) (see Proposition~\ref{prop:Moduli_t}).

        \medskip

        Also, one could consider the stack $\Mg_{1,m}(m-1)$ of $m$-marked $(m-1)$-stable curves of arithmetic genus one formulated originally by the works of \cite{Smyth, Smyth2} (see Proposition~\ref{prop:Moduli_s}).

        \medskip

        For higher genus curves, we recall the notion of quasi--admissible covers whereby the general member of $C$ is \textit{not} an admissible cover of $\Pb^1$ and have been studied in depth by \cite[\S 2.4.]{Stankova} as the closest covers to the original families of stable curves. In this regard, \cite{Fedorchuk} introduced the proper Deligne--Mumford stack $\Hc_{2g}[2g-1]$ of quasi-admissible genus $g \ge 2$ curves. For the case of monic odd--degree hyperelliptic curves with a generalized Weierstrass equation $y^2 = x^{2g+1} + a_{4}x^{2g-1} + a_{6}x^{2g-2} + a_{8}x^{2g-3} + \cdots + a_{4g+2}$ we have $$\Hc_{2g}[2g-1] \iso \Pc(4,6,8,\dotsc,4g+2)$$ by \cite[Proposition 4.2(1)]{Fedorchuk} over $\mathrm{char}(K)=0$ and by \cite[Proposition 5.9]{HP2} over $\mathrm{char}(K)> 2g+1$. The $g=2$ case by $y^2 = x^{5} + a_{4}x^{3} + a_{6}x^{2} + a_{8}x + a_{10}$ with $\Hc_{4}[3] \iso \Pc(4,6,8,10)$ is of special interest as all genus 2 curves are hyperelliptic.



    \end{exmp}

    We recall the definition of \textit{weighted $\Fb_q$--point count} of an algebraic stack $\Xc$.

    \begin{defn}\label{def:wtcount}
        The weighted point count of $\Xc$ over $\Fb_q$ is defined as a sum:
        \[
        \#_q(\Xc)\coloneqq\sum_{x \in \Xc(\Fb_q)/\sim}\frac{1}{|\mathrm{Aut}(x)|},
        \]
        where $\Xc(\Fb_q)/\sim$ is the set of $\Fb_q$--isomorphism classes of $\Fb_q$--points of $\Xc$ (i.e. the set of non--weighted points of $\Xc$ over $\Fb_q$), and we take $\frac{1}{|\mathrm{Aut}(x)|}=0$ when $|\mathrm{Aut}(x)|=\infty$.
    \end{defn}
    
    A priori, the weighted point count can be $\infty$, but when $\Xc$ is of finite type, then the stratification of $\Xc$ by schemes as in \cite[Proof of Lemma 3.2.2]{Behrend} implies that $\Xc(\Fb_q)/\sim$ is a finite set, so that $\#_q(\Xc)<\infty$. 

    \medskip

    The weighted point count $\#_q(\Xc)$ of an algebraic stack $\Xc$ over $\Fb_q$ is \textit{algebro-topological} under the framework of the Weil conjectures as it is equal to the alternating sum of trace of geometric Frobenius. In this regard, we recall the \textit{Grothendieck-Lefschetz trace formula} for Artin stacks by \cite{Behrend,Sun}.

    \begin{thm}[Theorem 1.1. of \cite{Sun}] \label{SBGLTF} 
    Let $\Xc$ be an Artin stack of finite type over $\Fb_q$. Let $\Frob_q$ be the geometric Frobenius on $\Xc$. Let $\ell$ be a prime number different from the characteristic of $\Fb_q$, and let $ \iota : \Qlb \overset{\sim}{\to} \Cb$ be an isomorphism of fields. For an integer $i$, let $H^i_{\et, c}(\Xc_{/\Fqbar};\Qlb)$ be the cohomology with compact support of the constant sheaf $\Qlb$ on $\Xc$. Then the infinite sum regarded as a complex series via $\iota$
            \begin{equation}
            \sum_{i \in \Zb}~(-1)^i \cdot tr\big( \Frob^*_q : H^i_{c}(\Xc_{/\Fqbar};\Qlb) \to H^i_{c}(\Xc_{/\Fqbar};\Qlb) \big)
            \end{equation}
    is absolutely convergent to the weighted point count $\#_q(\Xc)$ of $\Xc$ over $\Fb_q$.
    \end{thm}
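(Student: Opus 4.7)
The plan is to prove the theorem by \emph{d\'evissage}, following the strategy of Behrend \cite{Behrend} for Deligne--Mumford stacks and Sun \cite{Sun} for Artin stacks. First I would reduce the problem to two building blocks: algebraic spaces and classifying stacks of linear algebraic groups. By Noetherian induction, any Artin stack $\Xc$ of finite type over $\Fb_q$ admits a finite stratification into locally closed substacks each of which, up to a finite \'etale cover, is a gerbe over an algebraic space banded by a linear algebraic group. Both sides of the trace formula are additive under stratification: the weighted count $\#_q$ tautologically, and the alternating trace sum via the long exact sequence of compactly supported $\ell$-adic cohomology for open--closed pairs (assuming absolute convergence, which is discussed below). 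It therefore suffices to verify the identity for each stratum.

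For each stratum the verification combines two ingredients. On the algebraic space side I invoke the classical Grothendieck--Lefschetz trace formula. For a fiber of the form $BG$ with $G$ a linear algebraic group, one computes $H^*_c(BG_{/\Fqbar}; \Qlb)$ explicitly and checks directly that the alternating Frobenius trace sums to $1/|G(\Fb_q)|$. The prototype is $\Xc = B\Gb_m$, where the compactly supported cohomology is concentrated in negative even degrees with Frobenius acting by $q^{-i}$ on $H^{-2i}_c$ for $i \geq 1$, and the geometric series $\sum_{i \geq 1} q^{-i} = 1/(q-1)$ matches $1/|\Gb_m(\Fb_q)|$. A general $BG$ is handled by a Leray spectral sequence for a maximal torus combined with the torus computation, and the gerbe step by another Leray spectral sequence, reducing gerbe-theoretic contributions to known pieces.

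The main technical obstacle is absolute convergence of the infinite sum under $\iota$. This is where the bulk of Behrend's and Sun's work lies. One must show that the Frobenius eigenvalues on $H^i_c(\Xc_{/\Fqbar}; \Qlb)$ are algebraic numbers whose archimedean absolute values (via $\iota$) are bounded above by $q^{i/2}$, in the spirit of Deligne's Weil~II bounds, now extended to Artin stacks via a theory of mixed $\ell$-adic complexes on stacks. Simultaneously one needs dimension estimates on the individual $H^i_c(\Xc_{/\Fqbar}; \Qlb)$ controlled by cohomology of a smooth atlas. Combining these yields geometric-series majorants that force absolute convergence of the trace sum, and hence legitimize rearrangement across the stratification.

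Finally I would assemble the stratum-by-stratum contributions and conclude by the base case on $BG$ together with the trace formula for algebraic spaces. The hardest step in practice is the weight estimate on a general Artin stack, since the dualizing complex may fail to be pure and the grading by weights requires the full apparatus of \cite{Behrend,Sun}; this is the technical heart of the argument, and it is what makes the passage from the Deligne--Mumford case to the Artin case genuinely nontrivial.
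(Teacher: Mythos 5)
This theorem is quoted in the paper verbatim as Theorem 1.1 of \cite{Sun}; the paper gives no proof of its own, so the only meaningful comparison is with the cited literature. Your outline correctly reproduces the Behrend--Sun d\'evissage: stratification into quotient-type strata, additivity of both sides, the explicit computation for $B\Gb_m$ (your identification of $H^{-2j}_c$ with $\Qlb(j)$ and the geometric series $\sum_{j\geq 1} q^{-j} = 1/(q-1)$ is right), and the recognition that the genuine difficulty is absolute convergence via stacky Weil~II weight bounds together with dimension estimates on the individual cohomology groups. This matches the actual structure of the proofs in \cite{Behrend} and \cite{Sun}, so I have nothing to flag beyond noting that filling in the convergence step in detail is the content of those papers rather than something reproducible in a short argument.
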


    Lastly, we recall that the cohomology in torsion-free coefficient coincide for the fine moduli stack and its coarse moduli space. We show this for the \'etale cohomology over base field $\Fqbar$ in $\Ql$-coefficient by following the proof of the \cite[Proposition 7.3.2]{Sun}.

    \begin{lem} \label{FineCoarseCohomology} Let $\Xf$ be a smooth separated tame Deligne--Mumford stack of finite type over $\Fqbar$ and the coarse moduli map $c : \Xf \to X$ giving the coarse moduli space $X$. Then for all $i$, the pullback map

            \begin{equation*}
            c^* : H^i_{\et}({X}_{/\Fqbar};\Ql) \iso H^i_{\et}({\Xf}_{/\Fqbar};\Ql)
            \end{equation*}

                is an isomorphism.
    \end{lem}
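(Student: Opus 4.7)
The plan is to study the Leray spectral sequence associated to the coarse moduli morphism $c \colon \Xf \to X$. By Keel--Mori, the map $c$ is proper, surjective and has geometrically connected fibers. The Leray spectral sequence thus takes the form
$$E_2^{p,q} = H^p_{\et}(X; R^q c_* \Ql) \Rightarrow H^{p+q}_{\et}(\Xf; \Ql),$$
and its edge map in the direction of $X$ coincides with the pullback $c^*$. Hence, to prove the theorem it suffices to verify $c_* \Ql \iso \Ql$ together with the vanishing $R^q c_* \Ql = 0$ for all $q > 0$.

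The first step is to compute the stalks of $R^q c_* \Ql$ at a geometric point $\bar x \to X$. Since $c$ is proper, proper base change identifies $(R^q c_* \Ql)_{\bar x}$ with $H^q_{\et}(\Xf \times_X \Spec \Oc_{X,\bar x}^{sh}; \Ql)$. Because $\Xf$ is a tame Deligne--Mumford stack, the local structure theorem (see \cite{AOV}) gives an étale neighbourhood of $\bar x$ in $X$ on which $\Xf$ becomes isomorphic to a quotient stack $[U/G_{\bar x}]$, with $U$ a scheme and $G_{\bar x}$ the finite automorphism group of a lift of $\bar x$ to $\Xf$ (whose order is prime to the residue characteristic). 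Passing to the strict henselization shrinks $U$ down so that the fibre of $c$ over $\bar x$ is the classifying stack $BG_{\bar x}$, giving $(R^q c_* \Ql)_{\bar x} \iso H^q_{\et}(BG_{\bar x}; \Ql) \iso H^q(G_{\bar x}; \Ql)$ (group cohomology).

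Next I would invoke the fact that $\Ql$ is a field of characteristic zero, so that for the finite group $G_{\bar x}$ the averaging (Maschke) argument yields $H^q(G_{\bar x}; \Ql) = 0$ for $q > 0$ and $H^0(G_{\bar x}; \Ql) = \Ql$. Combined with the stalk computation above, this gives $R^q c_* \Ql = 0$ for $q > 0$ and $c_* \Ql = \Ql$. The spectral sequence therefore degenerates at $E_2$ into the desired isomorphism $c^* \colon H^i_{\et}(X; \Ql) \iso H^i_{\et}(\Xf; \Ql)$.

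The main technical point — and the step that requires the hypotheses most directly — is the stalk computation: one needs properness of $c$ (to apply proper base change), tameness (so that the local structure theorem applies and the stabilizers are linearly reductive finite group schemes acting in a well-behaved way) and the characteristic zero nature of $\Ql$ (so that finite group cohomology vanishes in positive degree). Once those three ingredients are in place the argument is a standard Leray spectral sequence collapse; the smoothness and separatedness of $\Xf$ and finite-typeness over $\Fqbar$ enter only to guarantee the existence and properness of the coarse moduli space via Keel--Mori.
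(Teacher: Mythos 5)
Your proof is correct, but it takes a genuinely different route from the paper's. The paper also reduces to showing the adjunction $\Ql \to \mathrm{R}c_*\Ql$ is an isomorphism and invokes the local structure theorem of \cite{AOV}, but from there it argues \emph{globally}: working \'etale locally, one writes $\Xf = [V/G]$ with $V$ an algebraic space, $G$ a finite group of order prime to the characteristic, and $q : V \to \Xf$ the canonical atlas. The key observations are that $\Ql \simeq (q_*\Ql)^G$ because $q$ is a $G$-torsor, that both $q$ and $c\circ q$ are finite (hence have no higher direct images), and that taking $G$-invariants of $\Ql$-sheaves is exact by Maschke. This yields $\mathrm{R}c_*\Ql \simeq \mathrm{R}c_*(q_*\Ql)^G \simeq ((c\circ q)_*\Ql)^G \simeq \Ql$ directly, with no reference to proper base change. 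Your argument instead works \emph{stalk by stalk}: it uses proper base change (for Deligne--Mumford stacks, available via the machinery of Laszlo--Olsson or Liu--Zheng) to reduce the stalk of $R^q c_*\Ql$ to the cohomology of the fiber, identifies that fiber (up to nilpotent thickening, which is invisible to \'etale cohomology) with the residual gerbe $BG_{\bar x}$, and then uses vanishing of finite group cohomology in characteristic zero. Both proofs ultimately rest on Maschke's theorem; what you buy with the stalk-wise approach is perhaps more geometric transparency (the fiber ``is'' $BG_{\bar x}$), while the paper's approach buys a shorter and more self-contained argument that sidesteps proper base change for stacks entirely, relying only on finiteness of $q$ and $c\circ q$. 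One small imprecision on your side: strict henselization does not literally shrink the fiber to $BG_{\bar x}$; the scheme-theoretic fiber of $c$ can carry a nilpotent thickening, and $G_{\bar x}$ may a priori act on a non-reduced artinian cover. The correct statement is that the \emph{reduced} fiber is the residual gerbe $BG_{\bar x}$ and that nilpotents do not affect \'etale cohomology — worth spelling out to make the argument airtight.
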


    \begin{proof}
    As $\Xf$ is a smooth separated tame Deligne--Mumford stack of finite type over $\Fqbar$, we can cover $X$ by \'etale charts $U$ such that pull-back of $U$ in $\Xf$ is the quotient stack of an algebraic space by a finite group \cite[Theorem 3.2.]{AOV}. The lemma follows from the $\ell$-adic Leray spectral sequence as in \cite[Theorem 1.2.5]{Behrend} once we have shown that the canonical map $\Ql \to \Rr c_* \Ql$ is an isomorphism. It suffices to show the isomorphism \'etale locally on $X$ and hence we assume $\Xf = [V/G]$ for some algebraic space $V$ under the action of finite group $G$ where $\mathrm{char}(\Fqbar)$ does not divide $|G|$. Let $ q : V \to \Xf $ be the canonical morphism. Observe that we have $\Ql \simeq (q_*\Ql)^G$. As both $q$ and $c \circ q$ are finite maps and $\Qb[G]$ for a finite group $G$ is a semisimple $\Qb$-algebra by the Maschke's Theorem, we acquire
    $$ \Rr c_* \Ql \simeq \Rr c_* (q_* \Ql)^G \simeq ((c \circ q)_* \Ql)^G \simeq \Ql$$ 
    \end{proof}

    \bigskip

    \section{Intersection theory of weighted projective bundles}
    \label{sec:Inter_wpb}

    Many people have studied the cohomology of weighted projective spaces and bundles, starting with \cite{Kawasaki}, where the base $S$ is a point. In \cite{Al Amrani} the author computes the integral cohomology of the coarse moduli space of $\mathcal{P}_S(\mathcal{E}, \lambdavec)$ as a $\mathbb{Q}$-vector space, and its multiplicative structure in case that $\mathcal{E}$ splits as a sum of line bundles and $\lambdavec$ satisfies a certain divisibility condition (with similar results in \'etale cohomology). The divisibility condition was later removed in \cite{BFR}. 

    \medskip

    Our proof of the theorem above is independent of the previous work, and proceeds in two steps: first, in the case where $\mathcal{E}$ splits as a sum of line bundles, the weighted projective bundle $\mathcal{P}_S(\mathcal{E}, \lambdavec)$ admits a natural finite cover to a \emph{non-weighted} projective bundle. Via the usual projective bundle formula, this suffices to find its Chow groups. Here we explicitly use that we work with $\mathbb{Q}$-coefficients. In the second step, we use a splitting theorem to reduce to the first case.

    \medskip

    Let $S$ be a smooth Deligne-Mumford stack and let

    \begin{equation}
        \mathcal{E} = \mathcal{L}_0 \oplus \cdots \oplus \mathcal{L}_N / S
    \end{equation}

    \noindent be a vector bundle on $S$ decomposing into a direct sum of $N+1$ line bundles $\mathcal{L}_i$. We denote by $\mathrm{Tot}(\mathcal{E}) \to S$ the total space of this vector bundle, with zero section $0 \subseteq \mathrm{Tot}(\mathcal{E})$. Given a vector $\lambdavec = (\lambda_0, \ldots, \lambda_N)$ of positive integers, there exists an action of $\Gb_m$ on $\mathrm{Tot}(\mathcal{E})$ locally given by $\zeta \cdot (s_0, \ldots, s_N) = (\zeta^{\lambda_0} s_0, \ldots, \zeta^{\lambda_N} s_N)$. In the following, we want to study the intersection theory of the \emph{weighted projective bundle}
    \begin{equation} \label{eqn:PSElambda_2}
        \mathcal{P}_S(\mathcal{E}, \lambdavec) = [(\mathrm{Tot}(\mathcal{E}) \setminus 0)/ \Gb_m] \to S.
    \end{equation}
    This is a (Zariski) locally trivial bundle with fibre $\Pc(\lambdavec)$ over $S$.
    
    To present the answer, let $\ell = \mathrm{lcm}(\lambdavec)$ be the least common multiple of the $\lambda_i$, consider the vector $\vec \eta = (\ell/\lambda_0, \ldots, \ell/\lambda_N)$ and the modified bundle
    \begin{equation}
        \mathcal{E}_\eta = \mathcal{L}_0^{\otimes \eta_0} \oplus \cdots \oplus \mathcal{L}_N^{\otimes \eta_N} / S\,.
    \end{equation}
    Denote by $\mathbb{P}(\mathcal{E}_\eta) \to S$ the (unweighted) projective bundle associated to this vector bundle. Then there exists a natural map
    \begin{equation}
        \Phi : \mathcal{P}_S(\mathcal{E}, \lambdavec) \to \mathbb{P}(\mathcal{E}_\eta), [s_0 : \ldots : s_N] \mapsto [s_0^{\otimes \eta_0} : \ldots : s_N^{\otimes \eta_N}]
    \end{equation}
    over $S$.
    \begin{thm} \label{Thm:weightedprojectivebundle1}
    The map $\Phi$ is proper, flat and quasi-finite of degree $d=\ell^N/ \prod_{i=0}^N \lambda_i$ and the pullback
    \begin{equation*}
        \Phi^* : A^*(\mathbb{P}(\mathcal{E}_\eta), \mathbb{Q}) \to A^*(\mathcal{P}_S(\mathcal{E}, \lambdavec), \mathbb{Q})
    \end{equation*}
    induces an isomorphism on the Chow groups with $\mathbb{Q}$-coefficients, whose inverse is given by $1/d \cdot \Phi_*$. In particular, we have
    \begin{equation} \label{eqn:projectivebundleformula_1}
        A^*(\mathcal{P}_S(\mathcal{E}, \lambdavec), \mathbb{Q}) = A^*(S, \mathbb{Q})[\zeta] / (\zeta^{N+1} + c_1(\mathcal{E}_\eta) \zeta^N + \ldots + c_{N+1}(\mathcal{E}_\eta) )\,,
    \end{equation}
    where $\zeta = \ell \cdot c_1(\mathcal{O}_{\mathcal{P}_S(\mathcal{E}, \lambdavec)}(1))$ and where the Chern classes of $\mathcal{E}_\eta$ can be computed as
    \[
    c_i(\mathcal{E}_\eta) = e_i(\eta_0 c_1(\mathcal{L}_0), \ldots, \eta_N c_1(\mathcal{L}_N))\,,
    \]
    with $e_i$ the $i$-th elementary symmetric polynomial.
    \end{thm}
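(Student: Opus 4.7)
The plan is to attack the theorem in three steps: first establish the geometric properties of $\Phi$ (proper, flat, quasi-finite of the claimed degree), then use these together with the projection formula to identify pullback and pushforward on rational Chow, and finally translate the unweighted projective bundle formula on $\mathbb{P}(\mathcal{E}_\eta)$ into the weighted one on $\mathcal{P}_S(\mathcal{E}, \lambdavec)$.

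First I would check that $\Phi$ is well defined: locally, the map $(s_0, \ldots, s_N) \mapsto (s_0^{\otimes \eta_0}, \ldots, s_N^{\otimes \eta_N})$ on $\mathrm{Tot}(\mathcal{E}) \setminus 0$ is equivariant with respect to the group homomorphism $\zeta \mapsto \zeta^{\ell}$ from the $\mathbb{G}_m$ acting with weights $\lambdavec$ to the $\mathbb{G}_m$ acting with weights $(1,\ldots,1)$, since $\lambda_i \eta_i = \ell$ for all $i$. Both $\mathcal{P}_S(\mathcal{E}, \lambdavec)$ and $\mathbb{P}(\mathcal{E}_\eta)$ are proper over $S$ (all $\mathbb{G}_m$-weights are positive), the target is separated over $S$, so $\Phi$ is proper. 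On a geometric fiber over $S$ it reduces to the map $\mathcal{P}(\lambdavec) \to \mathbb{P}^N$, which is finite, giving quasi-finiteness. Since both source and target are smooth Deligne--Mumford stacks over $S$ of the same relative dimension $N$, miracle flatness, applied \'etale-locally via smooth atlases, implies that $\Phi$ is flat.

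The degree $d$ is then computed on a fiber, where the identity $\Phi^* \mathcal{O}_{\mathbb{P}(\mathcal{E}_\eta)}(1) = \mathcal{O}_{\mathcal{P}_S(\mathcal{E}, \lambdavec)}(\ell)$ plus the projection formula give
\[
d = d \cdot \int_{\mathbb{P}^N} c_1(\mathcal{O}(1))^N = \int_{\mathcal{P}(\lambdavec)} \Phi^* c_1(\mathcal{O}(1))^N = \ell^N \cdot \int_{\mathcal{P}(\lambdavec)} c_1(\mathcal{O}(1))^N = \ell^N/\textstyle\prod_i \lambda_i,
\]
using the standard intersection number $\int_{\mathcal{P}(\lambdavec)} c_1(\mathcal{O}(1))^N = 1/\prod_i \lambda_i$.

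For the Chow-theoretic isomorphism, finiteness and flatness of $\Phi$ plus the projection formula give $\Phi_* \circ \Phi^* = d \cdot \mathrm{id}$, so $\Phi^*$ is injective after inverting $d$ and $(1/d) \Phi_*$ is its left inverse. For surjectivity, the usual projective bundle formula says $A^*(\mathbb{P}(\mathcal{E}_\eta), \mathbb{Q})$ is a free $A^*(S, \mathbb{Q})$-module of rank $N+1$ generated by $1, h, \ldots, h^N$ with $h = c_1(\mathcal{O}_{\mathbb{P}(\mathcal{E}_\eta)}(1))$. On the other side, the equivariant Chow ring $A^*_{\mathbb{G}_m}(\mathrm{Tot}(\mathcal{E})\setminus 0, \mathbb{Q})$ computes $A^*(\mathcal{P}_S(\mathcal{E},\lambdavec), \mathbb{Q})$; using that the class of the zero section of $\mathrm{Tot}(\mathcal{E}) \to S$ equals $\prod_i (c_1(\mathcal{L}_i) + \lambda_i x)$ with $x = c_1(\mathcal{O}_{\mathcal{P}_S(\mathcal{E},\lambdavec)}(1))$, one concludes that the target is also a free $A^*(S, \mathbb{Q})$-module of rank $N+1$ generated by $1, x, \ldots, x^N$. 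Since $\Phi^* h = \ell x$ and $\ell$ is invertible in $\mathbb{Q}$, $\Phi^*$ carries the generating $A^*(S)$-basis to an $A^*(S)$-basis, giving the desired isomorphism.

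Finally, the relation in the Chow ring is obtained by substituting $h = \zeta$ (where $\zeta = \ell x$) into the defining relation $\prod_i (h + \eta_i c_1(\mathcal{L}_i)) = 0$ coming from the projective bundle formula for $\mathbb{P}(\mathcal{E}_\eta)$, and expanding
\[
\prod_{i=0}^N (\zeta + \eta_i c_1(\mathcal{L}_i)) \;=\; \sum_{k=0}^{N+1} e_k\bigl(\eta_0 c_1(\mathcal{L}_0),\ldots,\eta_N c_1(\mathcal{L}_N)\bigr)\,\zeta^{N+1-k} \;=\; \sum_{k=0}^{N+1} c_k(\mathcal{E}_\eta)\,\zeta^{N+1-k},
\]
where the last equality uses that the Chern roots of $\mathcal{E}_\eta = \bigoplus_i \mathcal{L}_i^{\otimes \eta_i}$ are the $\eta_i c_1(\mathcal{L}_i)$. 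The main obstacle, in my view, is the flatness step combined with the careful setup of the equivariant Chow computation for a quotient stack in the DM setting; once these are in place, everything else is formal manipulation via projection formula and Chern class identities.
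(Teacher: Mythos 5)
Your proposal is correct in spirit and takes a genuinely different route for the key step, so let me compare.

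For the geometric properties of $\Phi$, the paper reduces by local triviality to the fibre case $S = \mathrm{Spec}(K)$, then writes down the map $\Phi : \mathcal{P}(\lambdavec) \to \mathbb{P}^N$ on the standard affine charts $\mathcal{U}_i = \{s_i \neq 0\} \cong [\mathbb{A}^N/\mu_{\lambda_i}] \to U_i \cong \mathbb{A}^N$ explicitly and reads off finite, flat, and the degree $\ell^N/\prod_i \lambda_i$ directly from the monomial formula. Your version (properness over $S$ plus separatedness, miracle flatness applied to smooth atlases, degree via $\int_{\mathcal{P}(\lambdavec)} c_1(\mathcal{O}(1))^N = 1/\prod_i \lambda_i$) is correct but imports that intersection number as a black box, whereas the paper's local computation is self-contained and also feeds into its later work (e.g.\ the proof of Proposition~\ref{Prop:PlambdavectChowKunneth}). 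For the Chow-theoretic part, both routes agree that $(1/d)\Phi_* \circ \Phi^* = \mathrm{id}$ gives injectivity of $\Phi^*$. Where you diverge is surjectivity: the paper proves that $\Psi_S : A^*(S)[\zeta] \to A^*(\mathcal{P}_S(\mathcal{E},\lambdavec))$ is surjective by a double excision sequence and Noetherian induction over $S$, leaning on the Chow--K\"unneth property of $\mathcal{P}(\lambdavec)$ (Proposition~\ref{Prop:PlambdavectChowKunneth}). You instead propose to compute $A^*(\mathcal{P}_S(\mathcal{E},\lambdavec)) = A^*_{\mathbb{G}_m}(\mathrm{Tot}(\mathcal{E}) \setminus 0)$ via the equivariant Euler class of the zero section. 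This is a slicker, more conceptual route, and it bypasses the need for the Chow--K\"unneth lemma entirely; it buys you the relation $\prod_i(c_1(\mathcal{L}_i) + \lambda_i x)$ directly, without having to pull it back along $\Phi$ from the unweighted bundle.

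There is, however, one logical wrinkle you should repair. The equivariant excision sequence $A^*_{\mathbb{G}_m}(S) \xrightarrow{\cdot e} A^*_{\mathbb{G}_m}(\mathrm{Tot}(\mathcal{E})) \to A^*_{\mathbb{G}_m}(\mathrm{Tot}(\mathcal{E})\setminus 0) \to 0$ is only right exact, so it gives you surjectivity of $A^*(S)[x]/(\prod_i(c_1(\mathcal{L}_i) + \lambda_i x)) \twoheadrightarrow A^*(\mathcal{P}_S(\mathcal{E},\lambdavec))$, i.e.\ that $1, x, \ldots, x^N$ \emph{generate} as an $A^*(S)$-module; it does not by itself show they form a \emph{basis}. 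You then say ``$\Phi^*$ carries a basis to a basis'' as if freeness of the target were already known, which is circular as phrased. The correct order is: (i) equivariant excision gives that $1, x, \ldots, x^N$ generate $A^*(\mathcal{P}_S(\mathcal{E},\lambdavec))$ over $A^*(S)$; (ii) $\Phi^*$ is injective, its domain is free with basis $1, h, \ldots, h^N$, and $\Phi^*h^k = \ell^k x^k$, so the image of $\Phi^*$ is the $A^*(S)$-submodule generated by $1, x, \ldots, x^N$ (here one inverts $\ell$), which by (i) is everything; (iii) hence $\Phi^*$ is an isomorphism and, only now, freeness of the target follows a posteriori. With this reordering your argument closes. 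The paper's Noetherian-induction route avoids this subtlety altogether by establishing freeness on both sides from the Chow--K\"unneth base case.
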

    For the proof, we need some more preparatory results.
    \begin{lem} \label{Lem:partialquotient}
    Let $G$ be an algebraic group acting on varieties $X,Y$ over $K$ such that the action on $X$ is transitive and let $x_0 \in X$ be a $K$-point with stabilizer $G_{x_0}$. Then there exists a natural isomorphism
    \begin{equation}
        f : [X \times Y / G] \xrightarrow{\sim} [Y / G_{x_0}]
    \end{equation}
    whose inverse is induced by the $G_{x_0}$-equivariant map
    \[
    Y \to \{x_0\} \times Y \subseteq X \times Y\,.
    \]
    \end{lem}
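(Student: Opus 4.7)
The plan is to exhibit mutually inverse $2$-functors on $T$-points. For a test scheme $T$, an object of $[X \times Y / G](T)$ is a $G$-torsor $\pi : P \to T$ together with a $G$-equivariant morphism $(\phi, \psi) : P \to X \times Y$, while an object of $[Y/G_{x_0}](T)$ is a $G_{x_0}$-torsor $Q \to T$ with a $G_{x_0}$-equivariant morphism $\psi_0 : Q \to Y$. The natural functor in one direction sends $(P, \phi, \psi)$ to the pair $(Q, \psi|_Q)$ where $Q := \phi^{-1}(x_0) \subseteq P$.

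For this to produce a genuine $G_{x_0}$-torsor, I would invoke that $G$ acts transitively on $X$ with stabilizer $G_{x_0}$, so that the orbit map $G \to X$, $g \mapsto g \cdot x_0$, exhibits $X$ as the fppf quotient $G/G_{x_0}$ and is itself a $G_{x_0}$-torsor; pulling back this torsor along $\phi$ yields the desired $G_{x_0}$-torsor structure on $Q \to T$, and $\psi|_Q$ is automatically $G_{x_0}$-equivariant. Conversely, given $(Q, \psi_0)$, I would form the $G$-torsor $P := Q \times^{G_{x_0}} G$ (the contracted product) and define the $G$-equivariant morphism $P \to X \times Y$ as the one that corresponds, via the universal property of the contracted product, to the $G_{x_0}$-equivariant morphism $Q \to X \times Y$, $q \mapsto (x_0, \psi_0(q))$. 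This is precisely the map induced by $Y \to \{x_0\} \times Y \hookrightarrow X \times Y$ in the statement. Both constructions are manifestly functorial in $T$ and in isomorphisms, and the canonical identification $Q \xrightarrow{\sim} \phi^{-1}(x_0)$ shows they are mutually inverse up to natural isomorphism.

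A cleaner, coordinate-free repackaging combines the general equivalence $[Z/H] \simeq [(G \times^H Z)/G]$ for a closed subgroup $H \leq G$ and an $H$-scheme $Z$ (applied with $H = G_{x_0}$ and $Z = Y$) with the $G$-equivariant isomorphism
\[
G \times^{G_{x_0}} Y \;\xrightarrow{\sim}\; X \times Y, \qquad [g, y] \mapsto (g \cdot x_0,\, g \cdot y),
\]
whose inverse sends $(x, y)$ to $[g, g^{-1} \cdot y]$ for any $g \in G$ with $g \cdot x_0 = x$; this is well-defined and an isomorphism precisely because $G \to X$ is a $G_{x_0}$-torsor.

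The only substantive technical input is the statement that the orbit map $G \to X$ is a $G_{x_0}$-torsor, which is the standard homogeneous space theorem: a transitive action of an algebraic group $G$ on a variety $X$ over $K$ gives a faithfully flat orbit morphism with stabilizer fibres. Once this is granted, everything else is formal groupoid bookkeeping, and the evident compatibility with base change $T' \to T$ upgrades the pointwise equivalence to the desired isomorphism of stacks $f : [X \times Y / G] \xrightarrow{\sim} [Y / G_{x_0}]$ with the specified inverse. I expect no genuine obstacle here; the argument is a routine but useful application of the quotient-stack formalism, included because it will be the main engine for reducing computations on weighted projective bundles to computations on simpler quotient stacks.
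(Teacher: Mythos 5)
Your proof is correct, and it takes a recognizably different route from the paper's. The paper constructs the map $f$ concretely: it introduces the incidence variety $I = \{(g,x,y): gx = x_0\} \subseteq G \times X \times Y$, shows the projection $I \to X \times Y$ is a principal $G_{x_0}$-bundle (trivialized by $I$ itself via the fibre product computation), and then uses the $G_{x_0}$-equivariant map $\widetilde f: I \to Y$, $(g,x,y) \mapsto gy$ to descend to $\overline f : X\times Y \to [Y/G_{x_0}]$, which is checked to be $G$-invariant. Your approach works at the level of the functor of points / torsor descriptions of quotient stacks, sending $(P, \phi, \psi)$ to the fibre $\phi^{-1}(x_0)$ and inverting via the contracted product $Q \times^{G_{x_0}} G$; your ``cleaner repackaging'' via the induction equivalence $[Z/H] \simeq [(G \times^H Z)/G]$ together with the $G$-equivariant isomorphism $G\times^{G_{x_0}} Y \xrightarrow{\sim} X \times Y$, $[g,y]\mapsto (gx_0, gy)$, is essentially the slickest formulation. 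Both arguments hinge on exactly the same key input, namely that the orbit map $G \to X$, $g \mapsto g x_0$, is a $G_{x_0}$-torsor; the paper establishes this implicitly through its incidence variety, whereas you invoke it directly as the standard homogeneous space theorem. The paper's approach has the virtue of being self-contained and explicit about descent; yours is more conceptual and makes the role of $X \cong G/G_{x_0}$ transparent. One small imprecision to watch: you write that ``pulling back this torsor along $\phi$'' gives the $G_{x_0}$-torsor $Q \to T$, but the literal pullback $P \times_X G$ is a $G_{x_0}$-torsor over $P$, not over $T$; the torsor structure on $Q = \phi^{-1}(x_0) \to T$ is instead verified directly (fppf-locally trivializing $P$), or obtained by observing that $Q$ is the quotient of that pullback by the diagonal $G$-action. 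This is a phrasing issue rather than a gap, and your conclusion is sound.
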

    \begin{proof}
    To construct $f$ consider the incidence variety
    \begin{equation} \label{eqn:finverse}
        I = \{(g, x, y) : g x = x_0\} \subseteq G \times X \times Y
    \end{equation}
    Let $p : I \to X \times Y$ be the projection on the second and third factor. By the assumption on transitivity, the map $p$ is surjective. Moreover, for the action of the group $G_{x_0}$ on $I$ given by left-translation on the factor $G$ (and the \emph{trivial} action on $X,Y$), we claim that $p$ is a principal $G_{x_0}$-bundle. An fppf cover of $X$ trivializing this bundle is given by the projection $p : I \to X \times Y$ itself. 
    
    Indeed, the fibre product $F = I \times_{X \times Y} I$ parameterizes tuples $(g, g', x, y) \in G \times G \times X \times Y$ such that $g x = x_0$, $g' x = x_0$. Setting $\widetilde{g} = g' \circ g^{-1} \in G_{x_0}$ this is equivalent to the data of $(g, \widetilde{g}, x,y)$ such that $g x = x_0$ and such that $\widetilde{g} \in G_{x_0}$, which defines a trivial $G_{x_0}$-bundle over $I$ as desired.
    
    Now we note that the map $\widetilde{f} : I \to Y, (g,x,y) \mapsto g y$ is $G_{x_0}$-equivariant (with respect to $G_0$-action on the target induced by its given $G$-action). Using that $p: I \to X \times Y$ is a principal $G_{x_0}$-bundle together with the definition of the stack $[Y / G_{x_0}]$ it gives rise to a map $\overline{f} : X \times Y \to [Y / G_{x_0}]$. On the other hand, it is not hard to see that this map is invariant under the $G$-action on $X \times Y$ and thus factors through the quotient stack $[X \times Y / G]$ via the desired map $f : [X \times Y / G] \to [Y / G_{x_0}]$. It is then straightforward to check that the map \eqref{eqn:finverse} induces the inverse map of $f$.
    \end{proof}
    \begin{prop} \label{Prop:PlambdavectChowKunneth}
    Given a vector $\lambdavec \in \mathbb{Z}_{>0}^{N+1}$ of positive integers, the weighted projective space $\Pc(\lambdavec)$ has Chow group
    \[
    A^*(\Pc(\lambdavec), \mathbb{Q}) = \mathbb{Q}[\zeta]/(\zeta^{N+1})\,,
    \]
    where $\zeta = c_1(\mathcal{O}_{\Pc(\lambdavec)}(1))$ is the first Chern class of the bundle $\mathcal{O}_{\Pc(\lambdavec)}(1)$. Moreover, the space $\Pc(\lambdavec)$ satisfies the \emph{Chow-Künneth property}, i.e. for any smooth Deligne-Mumford stack $X$ the natural map
    \[
    \underbrace{A^*(X, \mathbb{Q}) \otimes_{\mathbb{Q}} A^*(\Pc(\lambdavec), \mathbb{Q})}_{=A^*(X, \mathbb{Q})[\zeta]/(\zeta^{N+1})} \to A^*(X \times \Pc(\lambdavec), \mathbb{Q})
    \]
    is an isomorphism.
    \end{prop}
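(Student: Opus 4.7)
The plan is to work equivariantly: realize $A^*(\Pcv,\Q) = A^*_{\Gb_m}(U,\Q)$ for $U = \Ab^{N+1}\setminus\{0\}$ via Edidin-Graham equivariant Chow (extended to smooth Deligne-Mumford quotient stacks as in Kresch), and to compute this group through the excision long exact sequence attached to the $\Gb_m$-equivariant closed embedding $i : \{0\} \hookrightarrow \Ab^{N+1}$. By homotopy invariance of equivariant Chow, $A^*_{\Gb_m}(\Ab^{N+1},\Q) \cong A^*(B\Gb_m,\Q) = \Q[t]$, where $t$ is the first Chern class of the universal character of weight one; along the classifying map $\Pcv \to B\Gb_m$, this $t$ pulls back to $\zeta = c_1(\Oc_{\Pcv}(1))$, so the generator of the final ring is automatically $\zeta$.

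Next I would pin down $i_*(1)$ via the self-intersection formula: it equals the $\Gb_m$-equivariant top Chern class of the normal bundle $T_0 \Ab^{N+1}$. Since this normal representation decomposes as $\bigoplus_{i=0}^{N} \chi_{\lambda_i}$ into characters of weights $\lambda_i$, the equivariant Euler class evaluates to $\prod_{i=0}^{N}(\lambda_i t) = \bigl(\textstyle\prod_i \lambda_i\bigr)\, t^{N+1}$, and excision then yields
\[
A^*_{\Gb_m}(U,\Q) \;=\; \Q[t]\big/\bigl((\textstyle\prod_i \lambda_i)\, t^{N+1}\bigr) \;=\; \Q[\zeta]/(\zeta^{N+1}).
\]
The last simplification uses that $\prod_i \lambda_i$ becomes invertible rationally, which is the one place where $\Q$-coefficients are essential (integrally one would read off the tautological torsion in the Chow group).

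For the Chow-Künneth statement, I would rerun the same excision argument with base $X$, applied to the $\Gb_m$-equivariant embedding $X \times \{0\} \hookrightarrow X \times \Ab^{N+1}$ with trivial $\Gb_m$-action on $X$. Triviality of the action together with the classical projective bundle formula for the trivial bundle $X \times \Pb^n_K \to X$ (via the standard finite-dimensional approximation of $B\Gb_m$ by projective spaces) identifies $A^*_{\Gb_m}(X \times \Ab^{N+1},\Q)$ with $A^*(X,\Q)[t]$ as an $A^*(X,\Q)$-algebra, and the same Euler-class computation identifies $i_*(1)$ as multiplication by $\bigl(\textstyle\prod_i \lambda_i\bigr)\, t^{N+1}$, now $A^*(X,\Q)$-linearly. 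Quotienting delivers $A^*(X \times \Pcv,\Q) \cong A^*(X,\Q)[\zeta]/(\zeta^{N+1})$, and under this identification the comparison map in the proposition is precisely the exterior-product $\alpha \otimes \zeta^k \mapsto \mathrm{pr}_1^*\alpha \cdot \mathrm{pr}_2^* \zeta^k$.

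The main technical obstacle is the bookkeeping for the equivariant input facts — excision and homotopy invariance for equivariant Chow of smooth Deligne-Mumford quotient stacks, the self-intersection formula identifying $i_*(1)$ with the equivariant Euler class of the normal representation, and the projective bundle formula for the trivial bundle over a smooth DM base — at the generality required; once these are in place the argument is entirely formal, and what remains is the verification that the canonical external-product map of the proposition agrees with the one produced by the excision computation above.
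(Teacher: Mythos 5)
Your proof is correct, but it follows a genuinely different route from the paper's. You compute $A^*(\Pcv,\Q)$ via $\Gb_m$-equivariant Chow groups, reading off the relation $\zeta^{N+1}=0$ from the excision sequence for $\{0\}\hookrightarrow\Ab^{N+1}$ together with the self-intersection formula that identifies $i_*(1)$ with the equivariant Euler class $\bigl(\prod_i \lambda_i\bigr)t^{N+1}$ of the normal representation; the Chow--K\"unneth statement drops out by rerunning the same excision computation over a base $X$, using only the classical projective-bundle formula for $X\times\Pb^m$. The paper instead stratifies $\Pcv$ by the open chart $\mathcal{U}_0=\{x_0\neq 0\}$ and its complement $\mathcal{Z}_0\cong\Pc(\lambda_1,\ldots,\lambda_N)$, identifies $\mathcal{U}_0\cong[\Ab^N/\mu_{\lambda_0}]$ by a quotient lemma, computes the pushforward from $\mathcal{Z}_0$ as multiplication by $\lambda_0\zeta$, and runs an induction on $N$; Chow--K\"unneth is deduced from a general compatibility of the K\"unneth property with stratifications (a result of Bae--Schmitt) combined with the vanishing of the higher Chow groups of $[\Ab^N/\mu_{\lambda_0}]$. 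Your approach is uniform in $N$ (no induction) and replaces the explicit stratification by the self-intersection formula, at the cost of invoking more equivariant machinery (homotopy invariance, excision, and the Euler-class identification in the Edidin--Graham/Kresch framework for smooth DM quotient stacks); the paper's argument is more hands-on and requires only the projective-bundle formula plus the cited stratification lemma. You rightly flag that one should verify the comparison map in the statement coincides with the isomorphism produced by excision -- this is routine, since both send $\alpha\otimes\zeta^k$ to $\mathrm{pr}_1^*\alpha\cdot\mathrm{pr}_2^*\zeta^k$, but it should be stated explicitly in a full write-up.
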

    Note that the intersection theory of $\Pc(\lambdavec)$ was studied before in various settings (see \cite{Kawasaki, Al Amrani, BFR}). We give a self-contained proof in the language of Chow groups with $\mathbb{Q}$-coefficients, which gives us the additional Chow-Künneth property that we need later.
    \begin{proof}[Proof of Proposition \ref{Prop:PlambdavectChowKunneth}]
    For our proof we want to use the fact that $\Pc(\lambdavec)$ has a cellular decomposition (similar to the concept in \cite{FultonIntersectionTheory}), which in our context means a locally closed stratification in stacks which are isomorphic to finite quotients of affine spaces $\mathbb{A}^n$. The closures of these strata then form a basis of $A^*(\Pc(\lambdavec), \mathbb{Q})$.
    
    To make this more precise, we first note that for dimension reasons we indeed have $\zeta^{N+1}=0$, so that there is a well-defined map from $\mathbb{Q}[\zeta]/(\zeta^{N+1})$ to $A^*(\Pc(\lambdavec), \mathbb{Q})$. Our goal is to show that this map is surjective and injective.
    
    For the surjectivity, consider the open substack 
    $$
    \mathcal{U}_0 = \{[x_0 : \ldots : x_N] : x_0 \neq 0\} \subseteq \Pc(\lambdavec)
    $$
    with complement $\mathcal{Z}_0 \cong \Pc(\lambda_1, \ldots, \lambda_N)$. Then we have the excision sequence
    \begin{equation} \label{eqn:excision1}
        A_*(\mathcal{Z}_0, \mathbb{Q}) \xrightarrow{i_*} A_*(\Pc(\lambdavec), \mathbb{Q}) \to A_*(\mathcal{U}_0, \mathbb{Q}) \to 0\,.
    \end{equation}
    By induction (starting with the trivial case $N=0$) we have $A_*(\mathcal{Z}_0, \mathbb{Q}) \cong \mathbb{Q}[\zeta]/(\zeta^{N+1})$. Moreover, since $\mathcal{Z}_0 = \{x_0 = 0\}$ is the zero set of the section $x_0$ of $\mathcal{O}_{\Pc(\lambdavec)}(\lambda_0)$, we have that the image of the map $i_*$ equals the image of $(\lambda_0 \zeta) \cdot \mathbb{Q}[\zeta]/(\zeta^{N}) \subseteq \mathbb{Q}[\zeta]/(\zeta^{N+1})$. On the other hand, the open substack $\mathcal{U}_0$ is isomorphic to $[\Gb_m \times \mathbb{A}^N / \Gb_m]$. Then observe that the action of $\Gb_m$ on itself of weight $\lambda_0$ is transitive and the stabilizer of $1$ equals $\mu_{\lambda_0}$. Thus using Lemma \ref{Lem:partialquotient} we have 
    $$
    \mathcal{U}_0 \cong [\mathbb{A}^N / \mu_{\lambda_0}].
    $$
    This is a vector bundle over $B \mu_{\lambda_0}$ of rank $N$ and thus its Chow group is isomorphic to $A^*(B \mu_{\lambda_0}, \mathbb{Q}) = \mathbb{Q} \cdot [B \mu_{\lambda_0}]$ and hence trivial. Thus the excision sequence \eqref{eqn:excision1} has the form
    \[
    \mathbb{Q}[\zeta]/(\zeta^{N}) \xrightarrow{\cdot \lambda_0 \zeta} A_*(\Pc(\lambdavec), \mathbb{Q}) \to \mathbb{Q} \to 0\,,
    \]
    which implies that $\mathbb{Q}[\zeta]/(\zeta^{N+1})$ surjects onto the Chow group of $\Pc(\lambdavec)$. For injectivity, note that the kernel of the natural map $\mathbb{Q}[\zeta] \to A_*(\Pc(\lambdavec), \mathbb{Q})$ must contain the ideal $(\zeta^{N+1})$ and thus be of the form $(\zeta^m)$ for some $m \leq N+1$. However, since $\zeta^N \neq 0$ as it is a zero-cycle of positive degree, the kernel is indeed equal to $(\zeta^{N+1})$.
    
    Given the stratification into $\mathcal{U}_0, \mathcal{Z}_0$ as above, by \cite[Proposition 2.10]{BaeSchmitt} and an inductive argument, the Chow-Künneth property of $\Pc(\lambdavec)$ follows if we can show the analogous property for $\mathcal{U}_0 \cong [\mathbb{A}^N / \mu_{\lambda_0}]$. This amounts to showing that
    $$
    A^*(X, \mathbb{Q}) \cong A^*(X \times [\mathbb{A}^N / \mu_{\lambda_0}], \mathbb{Q}).
    $$
    But again $X \times [\mathbb{A}^N / \mu_{\lambda_0}]$ is a vector bundle over $X \times B \mu_{\lambda_0}$ and thus the Chow groups of the two spaces agree. On the other hand, the coarse moduli space of $X \times B \mu_{\lambda_0}$ agrees with the coarse moduli space $|X|$ of $X$ and since the Chow group with $\mathbb{Q}$-coefficients can be computed on such a coarse moduli space, we get the desired chain of isomorphisms
    $$
    A^*(X \times [\mathbb{A}^N / \mu_{\lambda_0}], \mathbb{Q}) \cong A^*(X \times B \mu_{\lambda_0}, \mathbb{Q}) \cong A^*(|X|, \mathbb{Q}) \cong A^*(X, \mathbb{Q})\,.
    $$
    \end{proof}
    \begin{proof}[Proof of Theorem \ref{Thm:weightedprojectivebundle1}]
    Since both $\Pc(\mathcal{E}, \lambdavec)$ and $\mathbb{P}(\mathcal{E}_\eta)$ are Zariski-locally trivial bundles over $S$, the properties of the map $\Phi$ which are local on the target (proper, flat, quasi-finite and the degree) can be verified for the trivial base $S = \mathrm{Spec}(K)$. Thus consider the map $\Phi : \Pc(\lambdavec) \to \mathbb{P}^N$. The open subsets $U_i = \{x_i \neq 0\} \subseteq \mathbb{P}^N$ form a Zariski cover and the preimage of $U_i$ under $\Phi$ is precisely $\mathcal{U}_i = \{s_i \neq 0\} \subseteq \Pc(\lambdavec)$, which is isomorphic to $[\mathbb{A}^N / \mu_{\lambda_i}]$. Then we have a diagram
    \[
    \begin{tikzcd}
    \mathbb{A}^N \arrow[dr]  \arrow[d] & \\
    \mathcal{U}_i \arrow[r] & U_i \cong \mathbb{A}^N
    \end{tikzcd}
    \]
    and the diagonal map $\mathbb{A}^N \to \mathbb{A}^N$ is the finite, flat map of degree $\ell^N / \prod_{j \neq i} \lambda_j$ given by
    \[
    (s_0, \ldots, \widehat{s_i}, \ldots, s_N) \mapsto (s_0^{\ell/\lambda_0}, \ldots, \widehat{s_i^{\ell/\lambda_i}}, \ldots , s_N^{\ell/\lambda_N})\,,
    \]
    where the hat indicates that we omit the $i$-th entry of the vector.
    Since the map $\mathbb{A}^N \to \mathcal{U}_i$ is of degree $\lambda_i$, this shows all local properties of the map $\Phi$.
    
    Returning to the case of a general base $S$, it remains to show that $\Phi^*$ is an isomorphism with inverse $1/d \cdot \Phi_*$. Indeed, once this is established, the presentation \eqref{eqn:projectivebundleformula_1} is simply the standard formula for the Chow group of the projective bundle $\mathbb{P}(\mathcal{E}_\eta)$, where in addition we use that the natural line bundle $\mathcal{O}_{\mathbb{P}(\mathcal{E}_\eta)}(1)$ pulls back to $\mathcal{O}_{\mathcal{P}_S(\mathcal{E}, \lambdavec)}(\ell)$ under $\Phi$.
    
    To complete the proof, we note that the composition $(1/d \cdot \Phi_*) \circ \Phi^*$ is clearly equal to the identity by the projection formula (see \cite[Example 1.7.4]{FultonIntersectionTheory} for the argument in the case of schemes). This shows that $\Phi^*$ is injective, so we conclude by proving its surjectivity.
    
    We begin by noting, that since we have a diagram
    \[
    \begin{tikzcd}
    \mathcal{P}_S(\mathcal{E}, \lambdavec) \arrow[rr,"\Phi"] \arrow[dr,"\pi",swap] & & \mathbb{P}(\mathcal{E}_\eta) \arrow[dl, "\pi'"]\\
    & S &
    \end{tikzcd}
    \]
    it follows that the image of $\Phi^*$ contains all classes pulled back from $S$ itself. On the other hand, the natural line bundle $\mathcal{O}_{\mathbb{P}(\mathcal{E}_\eta)}(1)$ pulls back to $\mathcal{O}_{\mathcal{P}_S(\mathcal{E}, \lambdavec)}(\ell)$, so that the class
    $$\widetilde{\xi} = c_1(\mathcal{O}_{\mathcal{P}_S(\mathcal{E}, \lambdavec)}(1)) \in A^1(\mathcal{P}_S(\mathcal{E}, \lambdavec), \mathbb{Q})$$
    is likewise contained in the image of $\Phi^*$. We want to show that these two types of classes generate $A^*(\mathcal{P}_S(\mathcal{E}, \lambdavec), \mathbb{Q})$ as a $\mathbb{Q}$-algebra, i.e. that the natural map
    \[
    \Psi_S : A^*(S, \mathbb{Q})[\widetilde{\zeta}] \to A^*(\mathcal{P}_S(\mathcal{E}, \lambdavec), \mathbb{Q}), \quad \alpha \cdot \widetilde{\zeta}^m \mapsto \pi^*(\alpha) \cdot \widetilde{\zeta}^m
    \]
    is surjective.
    For this, let $V \subseteq S$ be an open substack on which all summands $\mathcal{L}_i$ of  $\mathcal{E}$ are trivial. Then we have $\pi^{-1}(V) \cong V \times \Pc(\lambdavec)$ is a trivial product over $V$. Denoting $K = S \setminus V$ the complement of $V$, we have two excision sequences
    \begin{equation}
    \begin{tikzcd}
    A_*(\pi^{-1}(K), \mathbb{Q}) \arrow[r] & A_*(\mathcal{P}_S(\mathcal{E}, \lambdavec), \mathbb{Q}) \arrow[r] & A_*(V \times \Pc(\lambdavec), \mathbb{Q}) \arrow[r] & 0\\
    A_*(K, \mathbb{Q})[\widetilde{\zeta}] \arrow[u, "\Psi_K"] \arrow[r]& A_*(S, \mathbb{Q})[\widetilde{\zeta}] \arrow[r] \arrow[u, "\Psi_S"] & A_*(V, \mathbb{Q})[\widetilde{\zeta}] \arrow[u, "\Psi_V"] \arrow[r] & 0
    \end{tikzcd}
        \,.
    \end{equation}
    Here the bottom row of the diagram is obtained from the excision sequence of $V \subseteq S$ by tensoring with $\mathbb{Q}[\widetilde{\zeta}]$, which is a right-exact operation. Now we see that the map $\Psi_V$ is surjective by Proposition \ref{Prop:PlambdavectChowKunneth}, whereas the surjectivity of $\Psi_K$ follows by Noetherian induction. By the four lemma, it follows that $\Psi_S$ is surjective, which finishes the proof.
    \end{proof}

    \begin{claim}[Stacky Leray-Hirsch]\label{StackyLeray-Hirsch}
        Let $S$ be a smooth Deligne-Mumford stack and let $\Pc_S(\Ec)$ be the weighted projective bundle on $S$ defined as before. Then $H^*(\Pc_S(\Ec);\Ql) \cong H^*(S;\Ql) \otimes H^*(\Pc(\lambdavec);\Ql)$ as $H^*(S;\Ql)$-modules.
    \end{claim}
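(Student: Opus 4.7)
The plan is to adapt the classical Leray--Hirsch argument to the setting of $\ell$-adic \'etale cohomology of the smooth Deligne--Mumford stack $\Pc_S(\Ec)$. By the defining expression \eqref{eqn:PSElambda_2}, the map $\pi : \Pc_S(\Ec) \to S$ is a Zariski-locally trivial fibration with fiber $\Pc(\lambdavec)$. The first step is to compute the cohomology of the fiber: Lemma~\ref{FineCoarseCohomology} identifies $H^*(\Pc(\lambdavec);\Ql)$ with the cohomology of its coarse moduli space, classical weighted projective space, which shares the rational $\ell$-adic cohomology of $\Pb^N$. In particular $H^*(\Pc(\lambdavec);\Ql) \iso \Ql[\zeta]/(\zeta^{N+1})$, with $\zeta$ in degree $2$ and everything concentrated in even degrees (also consistent with the Chow computation of Proposition~\ref{Prop:PlambdavectChowKunneth} via the cycle class map).

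Next I would produce global classes on $\Pc_S(\Ec)$ realizing the fiberwise generator. The tautological bundle $\mathcal{O}_{\Pc_S(\Ec)}(1)$ is globally defined, so $\widetilde{\zeta} := c_1(\mathcal{O}_{\Pc_S(\Ec)}(1)) \in H^2(\Pc_S(\Ec);\Ql)$ restricts to the generator of $H^2(\Pc(\lambdavec);\Ql)$ on every fiber. I would then run the Leray spectral sequence
\[
E_2^{p,q} = H^p(S; R^q \pi_* \Ql) \Longrightarrow H^{p+q}(\Pc_S(\Ec);\Ql).
\]
Since $\pi$ is proper, smooth and Zariski-locally trivial, the $R^q \pi_* \Ql$ are lisse sheaves with stalk $H^q(\Pc(\lambdavec);\Ql)$. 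The existence of global classes $1, \widetilde{\zeta}, \ldots, \widetilde{\zeta}^N$ whose restrictions form a fiberwise $\Ql$-basis forces the monodromy to be trivial (so these sheaves are constant) and degenerates the spectral sequence at $E_2$, by the standard Leray--Hirsch pattern. The conclusion $H^*(\Pc_S(\Ec);\Ql) \iso H^*(S;\Ql) \otimes H^*(\Pc(\lambdavec);\Ql)$ as $H^*(S;\Ql)$-modules follows, with explicit basis $1, \widetilde{\zeta}, \ldots, \widetilde{\zeta}^N$.

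As a more hands-on alternative, one can mimic the proof of Theorem~\ref{Thm:weightedprojectivebundle1}, replacing Chow groups throughout by $\ell$-adic cohomology: choose a Zariski-open substack $V \subseteq S$ trivializing the bundle so that $\pi^{-1}(V) \cong V \times \Pc(\lambdavec)$, apply the K\"unneth formula for $\ell$-adic cohomology on this piece, and patch via the excision long exact sequence against $K = S \setminus V$ together with Noetherian induction on $S$. This is the \'etale-cohomological shadow of the inductive argument already carried out for Chow groups.

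The main obstacle I anticipate is the technical justification of the Leray spectral sequence and the constancy of the local systems $R^q \pi_* \Ql$ for a map between smooth Deligne--Mumford stacks under the $\Ql$-coefficient convention $\varprojlim H^i(-;\mathcal{F}_n) \otimes \Ql$ of the introduction. This should follow from the standard DM-stack machinery (\`a la Behrend, or Liu--Zheng for the full six-functor package), but it can be side-stepped entirely by the K\"unneth-plus-excision route above, which only requires the \'etale-cohomological analogue of the Chow--K\"unneth property of Proposition~\ref{Prop:PlambdavectChowKunneth} --- whose proof is essentially identical after the same substitution.
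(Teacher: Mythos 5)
Your first route is essentially the paper's own argument. The paper observes that $1,\zeta,\ldots,\zeta^N$ (powers of $\zeta = c_1(\mathcal{O}_{\Pc_S(\Ec)}(1))$) restrict to a fiberwise basis, and then packages the Leray--Hirsch conclusion in the derived category: the classes induce a map $\oplus_{0\le i\le N}\underline{\Ql}_S[-2i] \to R\pi_*\underline{\Ql}_{\Pc_S(\Ec)}$ which is an isomorphism on stalks by proper base change, hence an isomorphism; applying $R\Gamma$ gives the module isomorphism. Your spectral-sequence phrasing (trivializing $R^q\pi_*\Ql$ by the global sections $\widetilde{\zeta}^i$ and degenerating at $E_2$) is the same Leray--Hirsch argument, and you supply a useful detail the paper leaves implicit: the identification $H^*(\Pc(\lambdavec);\Ql)\cong\Ql[\zeta]/(\zeta^{N+1})$ via Lemma~\ref{FineCoarseCohomology} and the observation that $c_1(\mathcal{O}(1))$ restricts to the fiberwise generator with $\Ql$-coefficients. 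Your second route (K\"unneth plus excision plus Noetherian induction, mirroring Theorem~\ref{Thm:weightedprojectivebundle1}) is a genuinely different and also valid path; it trades the sheaf-theoretic machinery for more elementary long-exact-sequence bookkeeping, and indeed the paper essentially runs that strategy when it proves the Chow-group statement, so it would integrate well with the rest of Section~\ref{sec:Inter_wpb}. Either way your proposal is correct.
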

    \begin{proof}
        We first observe that the Leray-Hirsch condition is obviously satisfied; i.e. the elements $1, \zeta, \cdots, \zeta^N$ of $H^*(\Pc_S(\Ec);\Ql)$ in cohomological degrees $0, 2, 4, \cdots, 2N$ restrict to form a basis of the cohomology of the fibre $H^*(\Pc(\lambdavec);\Ql)$. Therefore in the derived category of $\ell$-adic sheaves over $S$ we have a map induced by the cohomology classes (by the decomposition theorem):
        $$\oplus_{0\leq i\leq N} \underline{\Ql}_S[-2i] \to R\pi_*\underline{\Ql}_{\Pc_S(\Ec)}$$ which is in fact an isomorphism thanks to the Leray-Hirsch condition. Now the claim immediately follows by applying the global section functor to this complex and taking cohomology.
    \end{proof}

    \subsection{Proof of Theorem~\ref{Thm:weightedprojectivebundle2}}
    Based on the theorem above, we can now also compute the Chow group of a weighted projective bundle, where the bundle does not split into line bundles, but rather into sub-vector bundles of possibly higher ranks.
    
    For the proof, we need a variant of the \emph{splitting principle} inspired by an \\ \href{https://mathoverflow.net/q/87724}{answer of Angelo Vistoli on mathoverflow}.
    \begin{lem} \label{Lem:splittingprinciple}
    Given a Deligne-Mumford stack $S$ and a vector bundle $\mathcal{E}$ on $S$, there exists a flat morphism $f: \widehat{S} \to S$ such that for any morphism $W \to S$ the pullback 
    $$f^* : A^*(W, \mathbb{Q}) \to A^*(\widehat{S} \times_S W, \mathbb{Q})$$
    is injective and such that $f^* \mathcal{E}$ splits as a direct sum of line bundles.
    \end{lem}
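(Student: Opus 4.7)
The plan is to construct $\widehat{S}$ as a tower of geometric bundles over $S$ that tautologically diagonalizes $\mathcal{E}$ and is harmless for rational Chow. Set $r = \mathrm{rk}\,\mathcal{E}$ and define
\[
\widehat{S} := [\mathrm{Frame}(\mathcal{E})/T],
\]
where $\mathrm{Frame}(\mathcal{E}) \to S$ is the principal $\mathrm{GL}_r$-bundle parameterizing fiberwise isomorphisms $\mathcal{O}^{\oplus r} \xrightarrow{\sim} \mathcal{E}$, and $T \subset \mathrm{GL}_r$ is the diagonal maximal torus, which acts freely on $\mathrm{Frame}(\mathcal{E})$ by restriction of the $\mathrm{GL}_r$-action.

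For the splitting: on $\mathrm{Frame}(\mathcal{E})$ the pullback of $\mathcal{E}$ is tautologically trivialized as $\mathcal{O}^{\oplus r}$, and $T$ acts diagonally via its standard weights $\chi_1, \ldots, \chi_r$. Descending to the free quotient $\widehat{S}$, this eigenspace decomposition becomes a canonical direct sum $f^*\mathcal{E} \cong L_{\chi_1} \oplus \cdots \oplus L_{\chi_r}$ of line bundles, where $L_{\chi_i}$ is the line bundle on $\widehat{S}$ associated to the character $\chi_i$ of $T$.

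For injectivity of $f^*$, factor $f$ through the complete flag bundle as
\[
\widehat{S} = [\mathrm{Frame}(\mathcal{E})/T] \xrightarrow{\alpha} \mathrm{Fl}(\mathcal{E}) = [\mathrm{Frame}(\mathcal{E})/B] \xrightarrow{\beta} S,
\]
where $B \supset T$ is the upper-triangular Borel. The map $\alpha$ is a Zariski-locally trivial $B/T \cong \mathbb{A}^{r(r-1)/2}$-bundle, hence induces an isomorphism on $A^*(-,\mathbb{Q})$ by $\mathbb{A}^1$-homotopy invariance. The map $\beta$ is an iterated projective bundle, so applying the projective bundle formula $r-1$ times presents $A^*(\mathrm{Fl}(\mathcal{E}),\mathbb{Q})$ as a free $A^*(S,\mathbb{Q})$-module of rank $r!$, and in particular $\beta^*$ is injective. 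The entire construction commutes with base change: for any $W \to S$ we have $\widehat{S} \times_S W = [\mathrm{Frame}(\mathcal{E}|_W)/T]$, and the same two-step factorization proves that $A^*(W,\mathbb{Q}) \to A^*(\widehat{S} \times_S W, \mathbb{Q})$ is injective. Flatness of $f$ comes for free from the smoothness of $\mathrm{GL}_r/T$ as a homogeneous space.

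The main technical input, namely a projective bundle formula and $\mathbb{A}^1$-homotopy invariance for rational Chow groups of smooth Deligne-Mumford stacks, is standard and already implicitly used in the proof of Proposition~\ref{Prop:PlambdavectChowKunneth}; the most delicate point in adapting it is verifying that both properties persist after arbitrary base change $W \to S$, which however just reduces to applying the same statements over $W$ in place of $S$.
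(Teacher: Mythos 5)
Your proof is correct and follows essentially the same route as the paper: take $\widehat{S} = [P/T]$ with $P$ the frame bundle of $\mathcal{E}$ and $T$ the maximal torus, observe that $f^*\mathcal{E}$ splits via the $T$-eigenspace decomposition, and establish injectivity of $f^*$ (stable under base change) by factoring $f$ through the flag bundle $[P/B] \to S$ as an affine bundle followed by an iterated projective bundle. You spell out a few more details (the rank $r!$ of the flag bundle module, the explicit $B/T \cong \mathbb{A}^{r(r-1)/2}$ fibre), but the construction and the key steps are identical to the paper's.
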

    \begin{proof}
    Let $P \to S$ be the frame bundle of $\mathcal{E}$, whose fibre over $s \in S$ is the set of all bases of the vector space $\mathcal{E}_s$. It carries a natural action of $\mathrm{GL}_r$. Let $T \subseteq B \subseteq \mathrm{GL}_r$ be the maximal torus and Borel subgroup of $\mathrm{GL}_r$, respectively. Then we claim that the map $f : \widehat{S} = [P / T] \to S$ satisfies the properties of the lemma. Indeed, since $\widehat{S}$ over any point $s \in S$ parameterizes a basis of $\mathcal{E}_s$ up to scaling, the pullback  $\mathcal{E}|_{\widehat{S}}$ naturally splits into line bundles. On the other hand, the map $f$ is the composition $[P/T] \to [P/B] \to S = [P/\mathrm{GL}_r]$ of an affine bundle and a surjective projective morphism, both of which have injective pullbacks in Chow (which remains true after base change with $W \to S$). 
    \end{proof}
    \begin{proof}[Proof of Theorem \ref{Thm:weightedprojectivebundle2}]
    When the rank of all bundles $\mathcal{E}_i$ is $1$, the result is precisely Theorem \ref{Thm:weightedprojectivebundle1}. In the more general setting, we can use the fact that the bundles $\mathcal{E}_i$ split into line bundles Zariski locally and repeat the excision sequence arguments from the proof of Theorem \ref{Thm:weightedprojectivebundle1} to conclude that the Chow group of $\mathcal{P}_S(\mathcal{E}, \lambdavec)$ is generated as a $\mathbb{Q}$-algebra by classes pulled back from $S$ and by the class $\zeta$. In other words, we have a natural surjection 
    \begin{equation} \label{eqn:Psimapsplitting}
    \Psi: A^*(S, \mathbb{Q})[\zeta] \to A^*(\mathcal{P}_S(\mathcal{E}, \lambdavec), \mathbb{Q})\,.
    \end{equation}
    Given $f : \widehat{S} \to S$ as in Lemma \ref{Lem:splittingprinciple}, we can ensure that all $f^* \mathcal{E}_i$ split into line bundles (applying the lemma $N+1$ times if necessary). Then we can compute the Chow group of $\mathcal{P}_{\widehat{S}}(f^* \mathcal{E}, \lambdavec) = \mathcal{P}_S(\mathcal{E}, \lambdavec) \times_S \widehat{S}$ and have a sequence of maps
    \[
    A^*(S, \mathbb{Q})[\zeta] \xrightarrow{\Psi} A^*(\mathcal{P}_S(\mathcal{E}, \lambdavec), \mathbb{Q}) \xrightarrow{f^*} A^*(\mathcal{P}_S(\mathcal{E}, \lambdavec), \mathbb{Q}) = A^*(\widehat{S}, \mathbb{Q})[\zeta] / (\zeta^{N+1} + \ldots + f^* c_{N+1}^{\vec \eta}(\mathcal{E}))
    \]
    with $f^*$ injective. It follows that the element
    \[
    Q = \zeta^{N+1} + \zeta^N c_{1}^{\vec \eta}(\mathcal{E}) \ldots +  c_{N+1}^{\vec \eta}(\mathcal{E})
    \]
    must be in the kernel of $\Psi$. To show that it generates the kernel (as an ideal), note that the quotient $A^*(S, \mathbb{Q})[\zeta] / (Q)$ has a direct sum decomposition
    \begin{equation} \label{eqn:ASmodQ}
    A^*(S, \mathbb{Q})[\zeta] / (Q) = \bigoplus_{i=0}^N A^*(S, \mathbb{Q}) \cdot \zeta^i
    \end{equation}
    into $N+1$ copies of $A^*(S, \mathbb{Q})$. Let $\pi : \mathcal{P}_S(\mathcal{E}, \lambdavec) \to S$ be the projection to the base, then we have a linear map
    \[
    G : A^*(S, \mathbb{Q})[\zeta] / (Q) \to \bigoplus_{i=0}^N A^*(S, \mathbb{Q}), \alpha \mapsto \left( \pi_*(\Psi(\alpha)\cdot \zeta^i) \right)_{i=0, \ldots, N}\,.
    \]
    We claim that with respect to the direct sum decomposition \eqref{eqn:ASmodQ}, the map $G$ is given by multiplication with a matrix of elements in $A^*(S, \mathbb{Q})$ with entries $1$ on the anti-diagonal, and vanishing entries above this anti-diagonal. Assuming the claim, the map $G$ is injective (up to reordering it is a triangular base change) and since it factors via $\Psi : A^*(S, \mathbb{Q})[\zeta] / (Q) \to A^*(\mathcal{P}_S(\mathcal{E}, \lambdavec), \mathbb{Q})$, the map $\Psi$ must also be injective. We have already seen it to be surjective, so it is an isomorphism and thus the theorem is proven.
    
    To show the claim, note that $\pi_* \zeta^{N} = [S]$, whereas $\pi_* \zeta^i = 0$ for $i < N$. It follows that for an element $\alpha = \sum_{j=0}^N \alpha_j \zeta^j$ in the domain of $G$ (with $\alpha_j \in A^*(S, \mathbb{Q})$), we have that the $i$th component of $G(\alpha)$ is given by
    \[
    G(\alpha)_i = \sum_{j=0}^N \pi_*(\alpha_j \cdot \zeta^{j+i}) = \sum_{j=0}^N \alpha_j \cdot \underbrace{\pi_*(\zeta^{j+i})}_{=0\text{ for }j<N-i}\,.
    \]
    But as stated before, we have $\pi_*(\zeta^{j+i}) = [S]$ for $j=N-i$ and $\pi_*(\zeta^{j+i})=0$ for $j<N-i$, which is precisely the shape of the matrix describing $G$ that was claimed.
    \end{proof}

    The weighted projective bundle formula in the rational Chow ring holds in the $\ell$-adic rational \'etale cohomology ring as well.

    \begin{cor} \label{cor:weightedprojectivebundle2_coho}
    Let $S$ be a smooth Deligne-Mumford stack with a vector bundle $\mathcal{E} = \bigoplus_{i=0}^N \mathcal{E}_i$ and let $\lambdavec \in \mathbb{Z}_{\geq 1}^{N+1}$ be a vector of positive integers. Let $L = \mathrm{lcm}(\lambdavec)$ and consider the vector $\vec \eta = (L/\lambda_0, \ldots, L/\lambda_N)$. Then we have
    \begin{equation} \label{eqn:projectivebundleformula_Coho}
        H^*(\mathcal{P}_S(\mathcal{E}, \lambdavec), \Ql) = H^*(S, \Ql)[\zeta] / (\zeta^{N+1} + c^{\vec \eta}_1(\mathcal{E}) \zeta^N + \ldots + c^{\vec \eta}_{N+1}(\mathcal{E}) )\,,
    \end{equation}
    where $\zeta = L \cdot c_1(\mathcal{O}_{\mathcal{P}_S(\mathcal{E}, \lambdavec)}(1))$.
    \end{cor}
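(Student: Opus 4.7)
The plan is to combine the Stacky Leray-Hirsch decomposition of Claim \ref{StackyLeray-Hirsch} with the Chow-theoretic presentation of Theorem \ref{Thm:weightedprojectivebundle2} via the cycle class map. First I would observe that Claim \ref{StackyLeray-Hirsch} already supplies the additive content: $H^*(\mathcal{P}_S(\mathcal{E}, \lambdavec); \Ql)$ is a free $H^*(S; \Ql)$-module with basis $1, \zeta, \zeta^2, \ldots, \zeta^N$, where $\zeta = L \cdot c_1(\mathcal{O}_{\mathcal{P}_S(\mathcal{E}, \lambdavec)}(1))$. The Leray-Hirsch condition itself is checked by noting that Proposition \ref{Prop:PlambdavectChowKunneth} together with the cycle class map for the smooth DM stack $\Pc(\lambdavec)$ yields $H^*(\Pc(\lambdavec); \Ql) = \Ql[\zeta]/(\zeta^{N+1})$. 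The entire remaining question is then to identify the unique monic relation of degree $N+1$ that expresses $\zeta^{N+1}$ as an $H^*(S; \Ql)$-linear combination of $1, \zeta, \ldots, \zeta^N$.

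To pin down this relation, I would invoke the cycle class map
\[
\mathrm{cl} : A^*(\mathcal{P}_S(\mathcal{E}, \lambdavec), \Ql) \longrightarrow H^{2*}(\mathcal{P}_S(\mathcal{E}, \lambdavec); \Ql)
\]
on the smooth Deligne--Mumford stack $\mathcal{P}_S(\mathcal{E}, \lambdavec)$. Since $\mathrm{cl}$ is a ring homomorphism sending Chern classes of coherent sheaves to their $\ell$-adic analogues, applying it to the relation of Theorem \ref{Thm:weightedprojectivebundle2} yields exactly the identity
\[
\zeta^{N+1} + c^{\vec \eta}_1(\mathcal{E}) \zeta^N + \ldots + c^{\vec \eta}_{N+1}(\mathcal{E}) = 0
\]
inside $H^*(\mathcal{P}_S(\mathcal{E}, \lambdavec); \Ql)$. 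Combined with the free-module structure of the preceding step, this single monic degree-$(N+1)$ identity already exhausts the relations of the evaluation surjection $H^*(S; \Ql)[\zeta] \twoheadrightarrow H^*(\mathcal{P}_S(\mathcal{E}, \lambdavec); \Ql)$, which delivers the presentation \eqref{eqn:projectivebundleformula_Coho}.

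The main (minor) subtlety I expect to address is the justification that the cycle class map on the smooth Deligne--Mumford stack $\mathcal{P}_S(\mathcal{E}, \lambdavec)$ is multiplicative and intertwines Chow-theoretic Chern classes with their $\ell$-adic counterparts. For our stacks this is standard, via the global quotient presentation $\mathcal{P}_S(\mathcal{E}, \lambdavec) = [(\mathrm{Tot}(\mathcal{E}) \setminus 0)/\Gb_m]$ from \eqref{eqn:PSElambda} together with the corresponding equivariant statement for the smooth scheme $\mathrm{Tot}(\mathcal{E}) \setminus 0$. Should one prefer to avoid the cycle class map entirely, an equally acceptable route is to transcribe the proofs of Theorems \ref{Thm:weightedprojectivebundle1} and \ref{Thm:weightedprojectivebundle2} directly into $\ell$-adic \'etale cohomology: the key inputs (the projection formula $\Phi_* \Phi^* = d \cdot \mathrm{id}$ for the finite flat cover $\Phi$ of degree $d = L^N/\prod \lambda_i$, the classical projective bundle formula for $\mathbb{P}(\mathcal{E}_\eta)$ in $\ell$-adic cohomology, and the splitting principle of Lemma \ref{Lem:splittingprinciple}, whose pullbacks remain injective on $H^*(-;\Ql)$ by the same affine-bundle and proper surjective reasoning) transfer verbatim, so no new ideas are required.
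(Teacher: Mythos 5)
Your proposal is correct, but your primary route — transporting the presentation from $A^*$ to $H^*$ via the cycle class map — is genuinely different from what the paper does, and you are right to flag its one real hypothesis. The paper instead (i) reduces the split case to known computations for weighted projective bundles in $\ell$-adic cohomology (\cite[Theorem 6.2]{BFR}, \cite[\S III]{Al Amrani}, after passing to the coarse space via Lemma \ref{FineCoarseCohomology}), and then (ii) rewrites the entire splitting-principle argument of Theorem \ref{Thm:weightedprojectivebundle2} with Chow groups replaced by $H^*(-;\Ql)$, using Lemma \ref{Lem:splittingprinciple} and Claim \ref{StackyLeray-Hirsch} to replace the excision steps. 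In other words, the paper chooses your "equally acceptable" fallback and never introduces a cycle class map at all.

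The trade-off is clear. Your cycle-class route is shorter and avoids rerunning the splitting principle: you only need the free $H^*(S;\Ql)$-module structure from Claim \ref{StackyLeray-Hirsch} plus the monic degree-$(N+1)$ relation pushed forward from $A^*(\mathcal{P}_S(\mathcal{E},\lambdavec),\Ql)$, and then the basis-to-basis surjection $H^*(S;\Ql)[\zeta]/(Q) \to H^*(\mathcal{P}_S(\mathcal{E},\lambdavec);\Ql)$ is automatically an isomorphism. The cost is precisely the "minor subtlety" you identify: one must know that the cycle class map on smooth Deligne--Mumford stacks exists, is a ring homomorphism, and intertwines Chow-theoretic Chern classes (hence the twisted classes $c_j^{\vec\eta}$, which are built from ordinary Chern classes of the summands) with their $\ell$-adic counterparts. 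This is standard but is an input the paper does not establish, which is presumably why the authors took the longer splitting-principle route. Both paths are sound; yours would yield a somewhat cleaner written proof if the cycle class map formalism were first put in place.
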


    \begin{proof}
    The rational cohomology of the smooth separated tame Deligne--Mumford stack is the same as its coarse space by Lemma \ref{FineCoarseCohomology}. This allows us to use the description of the cohomology ring of a weighted projective bundle as an algebra over the cohomology ring of the base as in \cite[Theorem 6.2]{BFR} (see also \cite[\S III]{Al Amrani}) which are proved via the application of Leray-Hirsch Theorem.
    
    These references finish the case when the bundle $\mathcal{E}$ splits as a sum of line bundles. As before we can conclude the general case by applying the splitting principle. For this, we repeat the arguments of the section above, replacing Chow groups with  cohomology with $\mathbb{Q}_\ell$-coefficients. The global shape of the argument is the same, but we replace various technical sub-claims as follows:
    \begin{itemize}
        \item For the map $f : \widehat{S} \to S$ on which the pullback of $\mathcal{E}$ splits, we take the same construction $f: \widehat{S} = [P/T] \to S$ as in Lemma \ref{Lem:splittingprinciple}. Again, the pullback in cohomology via $f$, which is a composition of an affine bundle and a full flag bundle, is injective (using the Leray-Hirsch theorem).
        \item Instead of using excision theorems, the surjectivity of the map
        \[\Psi: H^*(S, \mathbb{Q}_\ell)[\zeta] \to H^*(\mathcal{P}_S(\mathcal{E}, \lambdavec), \mathbb{Q}_\ell),\]
        defined analogously to \eqref{eqn:Psimapsplitting}, follows from Claim \ref{StackyLeray-Hirsch}. The same claim also proves that $\Psi$ becomes an isomorphism when dividing its domain by the ideal generated by $Q$.
    \end{itemize}
    This last statement finishes the proof.
    \end{proof}

    We conclude this Section by proving Theorem~\ref{thm:Pic_Hom}.

    \subsection{Proof of Theorem~\ref{thm:Pic_Hom}}

    \begin{proof} 
    We have that $\Hom_n(\Pb^1,\Pcv)$ is naturally an open substack of the weighted projective stack $\Pc(\Lambdavec)$ of dimension $|\vec{\lambda}|n+N$, whose complement $Z \subseteq \Pc(\Lambdavec)$ is the locus of points $[s_0: \ldots s_N]$ where all $s_i$ \emph{do} vanish simultaneously at some $q \in \Pb^1$. We have an isomorphism $\mathrm{Pic}(\Pc(\Lambdavec)) = \mathbb{Z}$ by \cite[Proposition 6.4.]{Noohi} and for the Picard group of $\Hom_n(\Pb^1,\Pcv)$, we claim that the codimension of $Z$ in $\Pc(\Lambdavec)$ is precisely $N$, implying that for $N \geq 2$ we have
    \[\mathrm{Pic}(\Hom_n(\Pb^1,\Pcv)) = \mathrm{Pic}(\Pc(\Lambdavec) \setminus Z) = \mathrm{Pic}(\Pc(\Lambdavec)) = \mathbb{Z}\]
    by \cite[(ii) Theorem 2.1.4.]{Fringuelli}. To bound the codimension of $Z$, consider the incidence variety
    \[\widehat Z = \{(q,[s_0, \ldots, s_N]) \in \Pb^1 \times \Pc(\Lambdavec) : s_i(q) = 0 \text{ for }i=0, \ldots, N\} \subseteq \Pb^1 \times \Pc(\Lambdavec).\]
    For a fixed $q \in \Pb^1$, the set of points $(s_0, \ldots, s_N) \in \Lambdavec$ such that all $s_i$ vanish at $q$ is a linear subspace of codimension $N+1$ in $\Lambdavec$, and in fact, the projection $\widehat Z \to \Pb^1$ is a projective bundle of the corresponding rank. In particular, we have that $\widehat Z \subset \Pb^1 \times \Pc(\Lambdavec)$ is irreducible of codimension $N+1$. On the other hand, we have that $Z \subset \Pc(\Lambdavec)$ is the image of $\widehat Z$ under the projection $\Pb^1 \times \Pc(\Lambdavec) \to \Pc(\Lambdavec)$. Since the fibres of $\widehat Z \to Z$ are finite, it follows that the codimension of $Z$ is indeed $N$. 

    On the one hand, for $N=1$ with $\Pc(\lambda_0,\lambda_1)$ we have that the subvariety $Z$ of $\Pc(\Lambdavec)$ is cut out by the resultant $\mathrm{Res}(s_0, s_1)$. The resultant is an irreducible polynomial in the coefficients of $s_0, s_1$ (\cite[Chapter 8, Proposition-Definition 1.1]{GKZ}), which is homogeneous of degree $n(\lambda_0 + \lambda_1)$.

    Then, by \cite[(iv) Theorem 2.1.4.]{Fringuelli} we have an exact sequence
    \[
    \Zb \to \mathrm{Pic}(\Pc(\Lambdavec)) \to \mathrm{Pic}(\Hom_n(\Pb^1,\Pc(\lambda_0,\lambda_1))) \to 0
    \]
    where the first map is defined by $1 \mapsto 1 \cdot V(\mathrm{Res})$. As $\mathrm{Pic}(\Pc(\Lambdavec)) \iso \Zb$, we have that $$\mathrm{Pic}(\Hom_n(\Pb^1,\Pc(\lambda_0,\lambda_1))) \iso \Zb/\mathrm{Deg(Res)}\Zb = \Zb/((\lambda_0 + \lambda_1)n)\Zb.$$
    \end{proof} 
    
\bigskip

        
    \section{Stable cohomology of the Hom-stacks with weights}
    \label{sec:Coho}

    Recall that for a Deligne-Mumford stack $\mathcal{X}$ over $K$, we denote the $\ell$-adic cohomology group with rational coefficients by $H^i(\Xc;\mathbb{Q}_{\ell})$ (warning: this is not the \'etale cohomology with $\Ql$ coefficients, see e.g. \cite[Warning 3.2.1.9]{GL}, or any text on \'etale cohomology of schemes e.g. \cite{Milne}); by the same token a sheaf of $\Ql$ vector spaces is a $\mathbb{Z}_{\ell}$-sheaf $\mathcal{F}= (\mathcal{F}_n)$ and $$H^i(\Xc;\mathcal{F}):= \varprojlim H^i(\Xc;\mathcal{F}_n)\otimes_{\mathbb{Z}_{\ell}} \Ql$$ (similar to the notations set up in \cite[Section 19]{Milne}.) For brevity and convenience, and noting there is no scope of confusion since we are always working over rational coefficients, we will write $H^i(\mathcal{X})$ to stand for $H^i(\mathcal{X};\Ql)$.
    
   For $\lambdavec = (\lambda_0, \cdots, \lambda_N) \in \mathbb{Z}^{N+1}_{> 0}$ and positive integers $a, b$, we denote by $a\lambdavec +b$ the vector $(a\lambda_0 +b, a\lambda_1 +b, \cdots, a\lambda_N +b)$. Furthermore, given $\lambdavec = (\lambda_0,\cdots, \lambda_N)$ with $|\vec{\lambda}| \coloneqq \sum\limits_{i=0}^{N} \lambda_i$ and a vector space $V \coloneqq V_0\oplus \cdots \oplus V_N$ of dimension $|\vec{\lambda}|n +(N+1) -(N+1)g$, we denote by $\mathcal{P}(\oplus V_i, \lambdavec)$ the weighted projective stack where $\mathbb{G}_m$ acts on the direct summand $V_i$ of $V$ by weight $\lambda_i$. If $\mathcal{E}$ is a vector bundle on a space $\Xc$ such that $\mathcal{E} = \mathcal{E}_0\oplus \cdots\oplus \mathcal{E}_N$ then by $\mathcal{P}_{\Xc}(\oplus_i \mathcal{E}_i, \lambdavec)$ we denote the weighted projectivization of $\mathcal{E}$ where $\mathbb{G}_m$ acts on the sub-bundle $\mathcal{E}_i$ by weight $\lambda_i$. Note that when we write $\mathcal{P}_{\Xc} (\oplus_i \mathcal{E}_i, \lambdavec)$ we implicitly assume that $i$ runs from 0 to $N$ and that $\lambdavec \in \mathbb{Z}^{N+1}_{> 0}$.

    \subsection{Proof of Main Theorem~\ref{C-cohomThm}}
     
    Main Theorem \ref{C-cohomThm} (and in turn Theorem \ref{P-cohomThm}) is a direct consequence of \cite[Theorem 2]{Banerjee}. In particular, we construct an object over $\Delta S$ (called the \emph{symmetric simplicial category}, see \cite[Definition 2.10]{Banerjee}) in the category of (smooth proper) Deligne-Mumford stacks. 

    Now a degree $n$ morphism $C\to\mathcal{P}(\lambdavec)$ is equivalent to the following data:\begin{itemize}
        \item a line bundle $L$ of degree $n$ on $C$,
        \item an $(N+1)$-tuple $(s_0,\ldots, s_N)$ where $s_i\in H^0(C,L^{\otimes \lambda_i})$ 
        \item the sections $s_0,\ldots, s_N$ satisfy the condition that they have no common zeroes (also known as $\{s_0,\ldots, s_N\}$ is \emph{basepoint free}).
    \end{itemize} Then, $\mathrm{Hom}_n (C,\mathcal{P}(\lambdavec))$ is a Zariski open dense subset of $\mathrm{\widetilde{Hom}}_n (C,\mathcal{P}(\lambdavec))$ defined by \begin{gather*}
    \mathrm{\widetilde{Hom}}_n (C,\mathcal{P}(\lambdavec)) \coloneqq \{L,~[s_0:\ldots:s_N]: L \in \mathrm{Pic}^n(C),~s_i\in H^0(C,L^{\otimes \lambda_i}) \text{ for all }i\}.
    \end{gather*}  Note that $\mathrm{\widetilde{Hom}}_n (C,\mathcal{P}(\lambdavec))$ is isomorphic to the weighted projectivization of a vector bundle $\mathcal{E}$ on $\Pic^n(C)$ (assuming $n\geq 2g$) 
    whose fibre over $L$ is given by $V_L \coloneqq \oplus_i H^0(C,L^{\otimes \lambda_i}),$ where via the Riemann-Roch theorem we have  $H^0(C,L^{\otimes \lambda_i}) \cong \A^{\lambda_i n-g+1}$ and $\mathbb{G}_m$ acts by weight $\lambda_i$ on $H^0(C,L^{\otimes \lambda_i})$; fibre over $L\in \Pic^n C$ isomorphic to the weighted projective stack $\mathcal{P}(\oplus_i H^0(\mathbb{P}^1,\mathcal{O}(n)^{\otimes \lambda_i}),\lambdavec)$. 

    More precisely, if $$\nu: C\times \Pic^n C \to \Pic^n C$$ is the projection to the second factor and $P(n)$ denotes a Poincare bundle of degree $n$ on $C\times \Pic^n C$, then we let $\Gamma_{\lambda_i}(n)  \coloneqq \nu_* P(n)^{\otimes \lambda_i}$ (and when $\lambda_i=1$ we write $\Gamma (n)$). Then $\mathrm{\widetilde{Hom}}_n (C,\mathcal{P}(\lambdavec))$ is the weighted projectivization of $\oplus_i \Gamma_{\lambda_i}(n)$ where $\mathbb{G}_m$ acts on $\Gamma_{\lambda_i}(n)$ by weight $\lambda_i$.  By Theorem \ref{Thm:weightedprojectivebundle2} we know the cohomology of  $\mathrm{\widetilde{Hom}}_n (C,\mathcal{P}(\lambdavec))$- a key player in the proof of Theorem \ref{C-cohomThm} as we shall soon see. 

    Even though the exact cohomology of Theorem \ref{P-cohomThm} follows from stable cohomology of Theorem \ref{C-cohomThm} when we take the curve $C$ to be of genus $0$,  literature shows that not only is the genus 0 case worth proving in it's own right, in this manuscript it also portrays how the basic strategy of proving the higher genus case is essentially the same as that of the genus $0$ case; in other words if the reader understands the proof of Theorem \ref{P-cohomThm}, he knows the proof of Theorem \ref{C-cohomThm} as well. Keeping this in mind, we first prove Theorem \ref{P-cohomThm}.
    \begin{proof}[Proof of Theorem \ref{P-cohomThm}]
    First observe that $\mathrm{\widetilde{Hom}}_n (\Pb^1,\mathcal{P}(\lambdavec))$ 
    is a weighted projective stack isomorphic to $\mathcal{P}(\oplus_i H^0(\mathbb{P}^1,\mathcal{O}(n)^{\lambda_i},\lambdavec)$. Define a symmetric semisimplicial space (see \cite[Defintion 2.10]{Banerjee}) as follows. Let  \begin{align}
        \mathcal{T}_0 \coloneqq  \Pb^1\times \mathcal{P}(\oplus_i H^0(\mathbb{P}^1,\mathcal{O}(n\lambda_i-1)),\lambdavec)
    \end{align}\label{eqP1T0} and let \begin{align}
        \mathcal{T}_p \coloneqq  (\Pb^1)^{p+1}\times \mathcal{P}(\oplus_iH^0(\mathbb{P}^1,\mathcal{O}(n\lambda_i-p)),\lambdavec).
    \end{align}\label{eqP1Tp} 

    Letting $\mathcal{T}_{-1}$ denote $\mathrm{\widetilde{Hom}}_n (\Pb^1,\mathcal{P}(\lambdavec))$, we observe that there are natural \emph{face maps} (which are finite morphisms of smooth proper Deligne-Mumford stacks) $f_i: \mathcal{T}_p\to \mathcal{T}_{p-1}$ for all $p\geq 0$ given by \emph{adding a basepoint} from the $i^{th}$ factor i.e.

    \begin{gather}
        f_{i} : (\Pb^1)^{p+1} \times \mathcal{P}(\oplus_iH^0(\mathbb{P}^1,\mathcal{O}(n\lambda_i-(p+1)),\lambdavec) \to (\Pb^1)^{p} \times \mathcal{P}(\oplus_iH^0(\mathbb{P}^1,\mathcal{O}(n\lambda_i-p),\lambdavec) \nonumber\\
        \big([a_0:b_0],\ldots, [a_p:b_p]\big),\,\, [s_0:\ldots: s_N] \mapsto \nonumber \\ \big([a_0,b_0],\ldots, \widehat{[a_i:b_i]},\ldots [a_p:b_p]\big), \,\, [(b_ix-a_iy)s_0:\ldots: (b_ix-a_iy)s_N]\label{eq:facemap}
        \end{gather}

    \noindent where $\widehat{[a_i:b_i]}$ denotes removing the $i^{th}$ entry $[a_i:b_i]$. 
    In other words, the hypercover under consideration is the following:
        \begin{gather*}
        \cdots (\Pb^1)^3\times \mathcal{P}(\oplus_iH^0(\mathbb{P}^1,\mathcal{O}(n\lambda_i-3)),\lambdavec) \mathrel{\substack{\textstyle\rightarrow\\[-0.5ex]
                \textstyle\rightarrow \\[-0.5ex]
                \textstyle\rightarrow}}     (\Pb^1)^2 \times\mathcal{P}(\oplus_iH^0(\mathbb{P}^1,\mathcal{O}(n\lambda_i-2)),\lambdavec)\\ \rightrightarrows \Pb^1\times \mathcal{P}(\oplus_iH^0(\mathbb{P}^1,\mathcal{O}(n\lambda_i-1)),\lambdavec) \to \mathcal{P}(\oplus_iH^0(\mathbb{P}^1,\mathcal{O}(n\lambda_i)),\lambdavec)
        \end{gather*} with the unlabelled arrows denoting the face maps $f_i$.

        Let $\mathcal{Z}\subset \mathrm{\widetilde{Hom}}_n (\Pb^1,\mathcal{P}(\lambdavec))$ be the ``discriminant locus" given by \begin{align*}
        \mathcal{Z}=\{[s_0:\cdots: s_N]: s_i\in H^0(\Pb^1, \mathcal{O}(n)^{\otimes\lambda_i}) \text{ for all }i=0, \ldots, N, \\ s_0, \ldots, s_N \text{ have at least one common root} \}.
    \end{align*} Now we make the following observation (an almost immediate consequence of \cite[Theorem 7.1]{Conrad} or \cite{Deligne})):
\begin{claim}\label{CohomDescentClaim}
    The hypercovering $\mathcal{T}_{\bullet}\to \mathcal{Z}$ is universally of cohomological descent.
    \end{claim}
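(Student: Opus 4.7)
The plan is to reduce the claim to the classical proper descent theorem (\cite[Theorem 7.1]{Conrad}, following \cite{Deligne}): the \v{C}ech nerve of a proper surjective morphism of Noetherian DM stacks is universally of cohomological descent. To this end, I will identify $\mathcal{T}_\bullet \to \mathcal{Z}$ with (a $\Delta S$-variant of) the \v{C}ech nerve of the augmentation $\epsilon : \mathcal{T}_0 \to \mathcal{Z}$, where $\epsilon$ sends $(q, [s'_0:\ldots:s'_N])$ to $[\ell_q s'_0 : \ldots : \ell_q s'_N]$ with $\ell_q \in H^0(\Pb^1, \mathcal{O}(1))$ vanishing at $q$.

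First, I would verify that $\epsilon$ is proper and surjective. Properness is automatic: $\mathcal{T}_0$ is a product of the proper tame DM stacks $\Pb^1$ and $\mathcal{P}(\oplus_i H^0(\Pb^1,\mathcal{O}(n\lambda_i-1)),\lambdavec)$, and $\mathcal{Z}$ is a closed substack of a weighted projective stack, hence separated. Surjectivity is tautological: every $[s_0:\ldots:s_N] \in \mathcal{Z}$ has by definition at least one common root $q$, giving a preimage $(q,[s_0/\ell_q:\ldots:s_N/\ell_q])$ under $\epsilon$. A direct fiber-product computation then shows that $\underbrace{\mathcal{T}_0 \times_{\mathcal{Z}} \cdots \times_{\mathcal{Z}} \mathcal{T}_0}_{p+1\text{ copies}}$ parameterizes tuples $((q_j,[s^{(j)}]))_{j=0}^p$ for which $\ell_{q_j} s_i^{(j)}$ is $j$-independent (up to the $\Gb_m$-action); on the locus where the $q_j$ are distinct this is precisely $\mathcal{T}_p$ via the ``de-rooted'' section $[s_0/\prod_j \ell_{q_j}:\ldots:s_N/\prod_j \ell_{q_j}]$, and extends to $\mathcal{T}_p$ globally. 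The face maps coincide with the standard \v{C}ech-nerve projections up to the $\mathfrak{S}_{p+1}$-action permuting the $\Pb^1$-factors, which is exactly the extra symmetry encoded by $\Delta S$ (cf.~\cite[Definition 2.10]{Banerjee}).

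With these identifications in place, the cited proper-descent theorem applies directly and yields universal cohomological descent, the universality following from the stability of properness and surjectivity under arbitrary base change. The main obstacle I anticipate is transporting proper descent from its classical formulation — stated for schemes or algebraic spaces — into the DM-stack setting. In our situation this is manageable: since all stacks in play are tame DM, one passes to \'etale atlases by schemes where the classical theorem applies, and matches the two descent outputs via the $\ell$-adic Leray spectral sequence as in Lemma~\ref{FineCoarseCohomology}; the full $\infty$-categorical machinery of \cite{LZ} is not needed, as the authors themselves remark in the paragraph preceding the claim.
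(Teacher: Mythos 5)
Your plan rests on identifying $\mathcal{T}_\bullet \to \mathcal{Z}$ with the \v{C}ech nerve of the augmentation $\epsilon : \mathcal{T}_0 \to \mathcal{Z}$, and this identification is false. Already at level $p=1$: over the diagonal $q_0=q_1=q$ in $(\Pb^1)^2$, a point of $\mathcal{T}_0 \times_{\mathcal{Z}} \mathcal{T}_0$ is a pair $\big((q,[s]),(q,[s'])\big)$ with $[\ell_q s] = [\ell_q s']$, i.e.\ $[s]=[s']$ — with \emph{no} divisibility constraint on $s$ — whereas $\mathcal{T}_1$ over $(q,q)$ parameterizes $[t]$ with $t_i \in H^0(\Pb^1,\mathcal{O}(n\lambda_i-2))$, which maps to the fiber product only through the proper closed locus $\{[s]: \ell_q \mid s_i \text{ for all } i\}$. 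In particular the natural map $\mathcal{T}_1 \to \mathcal{T}_0\times_{\mathcal{Z}}\mathcal{T}_0$ is neither surjective nor an isomorphism over the diagonal, so the phrase ``extends to $\mathcal{T}_p$ globally'' does not hold; the \v{C}ech nerve has the wrong dimension over the diagonal strata and (unlike $\mathcal{T}_p$) is singular. This is not a cosmetic issue: the entire purpose of constructing $\mathcal{T}_\bullet$ is to replace the \v{C}ech nerve, whose terms are singular iterated fiber products, by smooth proper stacks with computable cohomology. A \v{C}ech nerve argument cannot produce the spectral sequence the paper needs.

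The fix — and the route the paper actually takes — is to notice that Conrad's Theorem~7.1 (following Deligne) applies not merely to \v{C}ech nerves but to \emph{proper hypercoverings}, where one only requires the comparison maps to the coskeleta to be proper surjections, not isomorphisms. The paper therefore abstracts this into Lemma~\ref{CohomDescentLemmaGen}: any proper hypercovering $\pi_\bullet : \mathcal{T}_\bullet \to \mathcal{Z}$ of Deligne--Mumford stacks for which each $\pi_p$ has trivial relative inertia is universally of cohomological descent. The proof of that lemma is the part of your plan that \emph{is} on target: the statement is \'etale-local on $\mathcal{Z}$, the triviality of the inertia kernel forces the $\pi_p$ (and hence the face maps) to be representable by algebraic spaces via \cite[Lemma 100.6.2]{Stacks}, and after an \'etale base change to a scheme one invokes Conrad's scheme-level result. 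You correctly anticipated this obstacle and remedy; what you missed is that it must be applied to the hypercovering structure directly, rather than via a \v{C}ech-nerve reduction, because no such reduction exists. You should also explicitly note where the inertia-triviality condition is used (it is the hypothesis that makes the schematic reduction work), and where it is verified for the specific $\mathcal{T}_\bullet$ at hand — in the paper's phrasing, ``the condition in the claim of the kernel being trivial is clearly satisfied'' because the maps $\mathcal{T}_p \to \mathcal{Z}$ are built from representable pieces (products with $(\Pb^1)^{p+1}$ and closed immersions/pullbacks along the $\Gb_m$-equivariant multiplication maps), so they induce injections on automorphism groups.
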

To prove it we show a more general lemma:
\begin{lem}\label{CohomDescentLemmaGen}
Let $\pi_{\bullet}: \mathcal{T}_{\bullet}\to \mathcal{Z}$ be a proper hypercovering in the category of Deligne-Mumford stacks, with the \'etale topology, such that for all $p$, and a morphism $$x: \mathrm{Spec}\,\, K \to \mathcal{T}_p$$ the kernel $\mathrm{Isom}_{\mathcal{T}_p} (x,x)\to \mathrm{Isom}_{\mathcal{Z}} (x,x)$ is trivial, then it is universally of cohomological descent.
\end{lem}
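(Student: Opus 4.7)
The plan is to reduce the statement to the classical proper cohomological descent for algebraic spaces (as in \cite[Theorem 7.1]{Conrad} or \cite{Deligne}) by exploiting the hypothesis on isotropy groups to convert each augmentation map $\pi_p$ into a representable morphism. The key observation is that for a morphism $f : \mathcal{X} \to \mathcal{Y}$ of Deligne--Mumford stacks, $f$ is representable by algebraic spaces if and only if for every geometric point $x$ the induced homomorphism $\mathrm{Isom}_{\mathcal{X}}(x,x) \to \mathrm{Isom}_{\mathcal{Y}}(f(x), f(x))$ is injective. Thus the hypothesis on $\pi_\bullet$ is precisely that each $\pi_p$ is a proper representable morphism, and hence so are its iterated fiber products that appear in the simplicial structure.

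First, I would pick an \'etale atlas $U \to \mathcal{Z}$ by an algebraic space (or even a scheme) and base change the hypercover to obtain $\mathcal{T}_\bullet \times_{\mathcal{Z}} U \to U$. Because representability and properness are both stable under arbitrary base change, this is a proper hypercover of algebraic spaces. At this point I would invoke the known proper cohomological descent for algebraic spaces, which produces, for any bounded-below complex $\mathcal{F}$ of torsion \'etale sheaves on $U$, a quasi-isomorphism $\mathcal{F} \xrightarrow{\sim} R(\pi_\bullet \times_{\mathcal{Z}} U)_* (\pi_\bullet \times_{\mathcal{Z}} U)^* \mathcal{F}$; this is Deligne's theorem in its algebraic-space form, cf.\ \cite[Theorem 7.1]{Conrad}.

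The final step is to descend the quasi-isomorphism along the \'etale atlas $U \to \mathcal{Z}$. Since the \'etale site of a Deligne--Mumford stack is defined by descent from any \'etale atlas, and since formation of $R\pi_{\bullet \, *} \pi_\bullet^*$ commutes with smooth (in particular \'etale) base change by the smooth base change theorem, the descent statement on $U$ propagates to the same statement on $\mathcal{Z}$. To get the \emph{universal} qualifier one repeats this argument after base-changing along any morphism $\mathcal{Z}' \to \mathcal{Z}$ of Deligne--Mumford stacks: the hypotheses on $\pi_\bullet$ (properness and representability of each $\pi_p$) are preserved under such base change, so the same reasoning applies verbatim.

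The main obstacle I anticipate is the bookkeeping in the \'etale descent step: one must verify that the relevant proper and smooth base change compatibilities hold with the hypotheses on coefficient sheaves we need (torsion invertible on $\mathcal{Z}$, eventually passing to $\mathbb{Z}_\ell$- and $\mathbb{Q}_\ell$-sheaves by the inverse limit formalism), and that the simplicial pushforward $R\pi_{\bullet\, *}$ genuinely glues from its values on an \'etale cover. Once these compatibilities are set up carefully, the reduction from Deligne--Mumford stacks to algebraic spaces via representability is essentially mechanical, and the algebraic-space case supplies all the substantive content. This is exactly the trick alluded to in the paragraph preceding the lemma: we sidestep the truly stacky version of cohomological descent (as in \cite{LZ}) by forcing ourselves into the representable setting.
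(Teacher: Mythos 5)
Your proposal is correct and takes essentially the same route as the paper: both observe that the hypothesis on isotropy groups is exactly the criterion for each $\pi_p$ to be representable by algebraic spaces, then reduce to an \'etale atlas of $\mathcal{Z}$ and invoke Conrad's proper cohomological descent \cite[Theorem 7.1]{Conrad}. The paper's write-up is terser (citing \cite[Lemma 100.6.2]{Stacks} for representability and passing directly to the scheme/algebraic-space case \'etale-locally), but the underlying argument is the same.
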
 
\begin{proof}[Proof of Lemma \ref{CohomDescentLemmaGen}]
    The proof of this lemma follows immediately by noting that the claim is about \'etale sheaves on the base Deligne-Mumford stack $\mathcal{Z}$, so it suffices to check the claim \'etale locally. Now thanks to \cite[Lemma 100.6.2]{Stacks} the face maps are representable by algebraic spaces, so \'etale locally we can simply apply the result for schemes, which is precisely  \cite[Theorem 7.1]{Conrad}: that a proper hypercovering in the category of schemes is universally of cohomological descent.
\end{proof}

\begin{proof}[Proof of \ref{CohomDescentClaim}]
In the special case when $\mathcal{T}_{\bullet}$ is the specific simplicial space defined above (\ref{eqP1Tp}) and $\mathcal{Z}$ is the discriminant locus, the condition in the claim of the kernel being trivial is clearly satisfied, and thus the observation follows.
\end{proof}

Having settled that $T_{\bullet}\to \mathcal{Z}$ as a semisimplicial space is indeed of cohomological descent, by virtue of having the additional structure of a $\Delta_{inj} S$ object, it further satisfies the conditions of \cite[Theorem 2]{Banerjee} \footnote{In fact one can make a more general statement at no extra cost, simply by translating cohomological descent in the $\infty$-category of $\ell$-adic sheaves $\mathrm{Sh}_{\ell}(\mathcal{Z})$ (as defined, for example, by Gaitsgory-Lurie in \cite{GL}) over to the indexing category $\Delta S$, essentially giving an $\infty$-categorical version of \cite[Lemma 2.11]{Banerjee}, as follows. Given a $S_{\bullet}$ hypercovering $\pi_{\bullet}: \mathcal{T}_{\bullet} \to \mathcal{Z}$ satisfying the conditions of Lemma \ref{CohomDescentLemmaGen}, there is an equivalence of endofunctors $$\mathrm{id} \to \big(\mathrm{R}{\pi_{\bullet}}_* \pi^*_{\bullet} \otimes \mathrm{sgn}_{S_{\bullet}} \big)^{S_{\bullet}}$$ in the $\infty$-category $\mathrm{Fun}(\mathrm{Sh}_{\ell}(\mathcal{Z}), \mathrm{Sh}_{\ell}(\mathcal{Z}))$, where $S_{\bullet}$ denotes the symmetric simplicial group given by $S_n$ denoting the symmetric group on $(n+1)$ elements. However, we do not need the full power of this statement in for our immediate computation. The interested reader can refer to \cite{Banerjee2}.} and results in a second quadrant spectral sequence whose $E_1$ page reads as  \begin{gather*}
        E_1^{-p,q} = \begin{cases}
            H^q(\mathcal{P}(\oplus_i H^0(\Pb^1,\mathcal{O}(n\lambda_i)),\lambdavec))(0) & p=0,\\ H^{q-2N}(\Pb^1\times \mathcal{P}(\oplus_i H^0(\Pb^1,\mathcal{O}(n\lambda_i-1))(-1) & p=1,\\ H^0(\Pb^1)\otimes H^2(\Pb^1)\otimes H^{q-4N-2}(\mathcal{P}(\oplus_i H^0(\Pb^1,\mathcal{O}(n\lambda_i-2))(-2) & p=2,\\ 0 & \text{ otherwise},
        \end{cases}
    \end{gather*} with the differentials given by the alternating sum of the Gysin pushforwards induced by the face maps, which is what we shall compute now. 
    \begin{itemize}
        \item \textit{Computing $d_1^{1,q}: E_1^{-1,q} \to E_1^{0,q}$.}  
        
        For simplicity we denote the differential by $d_1^1$. Let  $$\iota: \mathcal{P}(\oplus_i H^0(\mathbb{P}^1,\mathcal{O}(n\lambda_i-1)),\lambdavec) \hookrightarrow\mathcal{P}(\oplus_i H^0(\mathbb{P}^1,\mathcal{O}(n\lambda_i)),\lambdavec)$$ denote the inclusion given by adding a basepoint. 
        
        \noindent Choose generators $\mathbf{1}\in H^0(\Pb^1)$ and $e\in H^2(\Pb^1)$, and let $h$ denote the hyperplane class in $\mathcal{P}(\oplus_i H^0(\mathbb{P}^1,\mathcal{O}(n\lambda_i),\lambdavec)$. Then we claim that: \begin{gather*}
            d_1^1={f_0}_*: H^{*-2N}(\Pb^1\times \mathcal{P}(\oplus_iH^0(\mathbb{P}^1,\mathcal{O}(n\lambda_i-1)),\lambdavec)) \to  \\ H^*(\mathcal{P}(\oplus_iH^0(\mathbb{P}^1,\mathcal{O}(n\lambda_i),\lambdavec))\\ \mathbf{1}\otimes \iota^*\alpha + e\otimes \iota^*\beta \mapsto \alpha h^N+ \beta h^{N+1}
        \end{gather*} is a map of $H^*(\mathcal{P}(\oplus_iH^0(\mathbb{P}^1,\mathcal{O}(n\lambda_i),\lambdavec))$-modules, where $$\alpha, \beta\in H^*(\mathcal{P}(\oplus_iH^0(\mathbb{P}^1,\mathcal{O}(n\lambda_i),\lambdavec).$$ To see this, first note that $$\iota^*: H^*(\mathcal{P}(\oplus_iH^0(\mathbb{P}^1,\mathcal{O}(n\lambda_i-1),\lambdavec)) \to H^*(\mathcal{P}(\oplus_iH^0(\mathbb{P}^1,\mathcal{O}(n\lambda_i),\lambdavec))$$ is a surjection; next, the image of the fundamental class $$[\Pb^1\times \mathcal{P}(\oplus_iH^0(\mathbb{P}^1,\mathcal{O}(n\lambda_i-1),\lambdavec)]\in H^0(\Pb^1\times \mathcal{P}(\oplus_iH^0(\mathbb{P}^1,\mathcal{O}(n\lambda_i-1),\lambdavec))$$ is the locus of elements in $\mathcal{P}(\oplus_iH^0(\mathbb{P}^1,\mathcal{O}(n\lambda_i),\lambdavec)$ that has a basepoint i.e. $\mathcal{Z}$, which is rationally equivalent, and thus cohomologous, to (a multiple of) $h^N$; and finally, for a fixed point $[a:b]\in \Pb^1$, the locus given by $$\{[s_0:\ldots: s_N]\in \mathcal{P}(\oplus_iH^0(\mathbb{P}^1,\mathcal{O}(n\lambda_i),\lambdavec): s_i([a:b])=0\}$$ is rationally equivalent, and in turn cohomologous, to (a multiple of) $h^{N+1}$. For the sake of simplicity we won't bother ourselves with the scalar multiples, which is fine because we're working over $\Qb$.
        The Gysin pushforward $d_1^1={f_0}_*$ surjects onto the ideal generated by $h^N$ in $H^*(\mathcal{P}(\oplus_iH^0(\mathbb{P}^1,\mathcal{O}(n\lambda_i,\lambdavec))$. Indeed, the preimage of $h^{N+i}$ is given by \begin{gather*}
        d_1^1(\mathbf{1}\otimes \iota^* h^i) =h^{N+i} = d_1^1(e\otimes \iota^*h^{i-1}) \text{ for } i\geq 1\\ d_1^1([\Pb^1\times \mathcal{P}(\oplus_iH^0(\mathbb{P}^1,\mathcal{O}(n\lambda_i-1),\lambdavec)) =h^N,
    \end{gather*} which shows that the image of $d_1^1$ is the ideal generated by $h^N$ in $H^*(\mathcal{P}(\oplus_iH^0(\mathbb{P}^1,\mathcal{O}(n\lambda_i),\lambdavec))$. The kernel of $d_1^1$ is given by elements of the form $\iota^*(\alpha)(h-e)$ for all $\alpha \in H^*(\mathcal{P}(\oplus_iH^0(\mathbb{P}^1,\mathcal{O}(n\lambda_i),\lambdavec))$. 

    The upshot is that on the $E_2$ page, for $p=0$ we have:\begin{gather}\label{eq3.7}
        E_2^{0,q} = \begin{cases}
            \Q(0) & q=2j, \,\, 0\leq j\leq 2(N-1)\\0 & \text{ otherwise.}
        \end{cases}
    \end{gather} 

    \item  \textit{Computing $d_1^{2,q}: E_1^{-2,q} \to E_1^{-1,q}$.} 
    For simplicity, we denote the differential by $d_1^2$. Like before, let $\iota: \mathcal{P}(\oplus_iH^0(\mathbb{P}^1,\mathcal{O}(n\lambda_i-1),\lambdavec) \hookrightarrow \mathcal{P}(\oplus_iH^0(\mathbb{P}^1,\mathcal{O}(n\lambda_i),\lambdavec)$ denote the inclusion given by adding a basepoint, and  let $h$ denote the hyperplane class in $\mathcal{P}(\oplus_iH^0(\mathbb{P}^1,\mathcal{O}(n\lambda_i-1),\lambdavec)$. Then the way we computed ${f_0}_*$ above works verbatim, and we have \begin{gather*}
        {f_0}_*:H^0(\Pb^1)\otimes H^2(\Pb^1)\otimes H^{*-2N-2}(\mathcal{P}(\oplus_iH^0(\mathbb{P}^1,\mathcal{O}(n\lambda_i-2)),\lambdavec)) \\\to H^{*}(\Pb^1\times \mathcal{P}(\oplus_iH^0(\mathbb{P}^1,\mathcal{O}(n\lambda_i-1),\lambdavec)\\
        \mathbf{1} \otimes e\otimes \alpha \mapsto e\otimes\alpha h^N 
    \end{gather*} and 
    \begin{gather*}
        {f_1}_*:H^0(\Pb^1)\otimes H^2(\Pb^1)\otimes H^{*-2N-2}(\mathcal{P}(\oplus_i H^0(\mathbb{P}^1,\mathcal{O}(n\lambda_i-2),\lambdavec)) \\ \to H^{*}(\Pb^1\times \mathcal{P}(\oplus_i H^0(\mathbb{P}^1,\mathcal{O}(n\lambda_i-1),\lambdavec))\\
        \mathbf{1} \otimes e\otimes \alpha \mapsto \mathbf{1}\otimes\alpha h^{N+1},
    \end{gather*} and therefore $$d_1^2(\mathbf{1}\otimes e \otimes \alpha)= \mathbf{1}\otimes \alpha h^{N+1}- e\otimes \alpha h^N.$$ Note that $d_1^2$ is injective, and the image is generated by $h^N$ in $$H^*(\Pb^1\times \mathcal{P}(\oplus_iH^0(\mathbb{P}^1,\mathcal{O}(n\lambda_i-1),\lambdavec)).$$ Consequently, on the $E_2$ page we have: \begin{flalign*}
    &E_2^{-1,q} = \begin{cases}
        \Ql(-N) & p=1, q=2j+2N+2, \,\, 0\leq j\leq 2(N-1)\\0 &\text{ otherwise}
    \end{cases},
    \\ & E_2^{-2,q} = 0,  \text{ for all } q.
    \end{flalign*} 
    \end{itemize}
    In effect on the $E_2$ page all differentials vanish; the spectral sequence degenerates and we obtain $$H^*(\mathrm{Hom}_n (\Pb^1,\mathcal{P}(\lambdavec));\Qb)\cong \frac{\Qb[h]}{h^N} \otimes \wedge\Qb\{t\}$$ where $h$ has cohomological degree $2$, and $t$ (which corresponds to $e-h\in Ker(d_1^1)$) has cohomological degree $2N+1$. Furthermore, over a field $\kappa$, with algebraic closure $\overline{\kappa}$, we have an isomorphism of $Gal (\overline{\kappa}/\kappa)$-representations:

    \begin{gather*}
        H^i_{\et}(\mathrm{Hom}_n (C,\mathcal{P}(\lambdavec));\Qb_{\ell}) = \begin{cases}
            {\Ql}(-j) & i=2j, 0\leq j\leq N-1\\ \Ql (-(j+1)) & i=2j+1, N\leq j\leq 2N-1\\ 0 & \text{ otherwise.} 
        \end{cases}
    \end{gather*}
    This completes the proof of Theorem \ref{P-cohomThm}.

    \end{proof}

    Following the same plan, we now proof Main Theorem \ref{C-cohomThm}.

    \begin{proof}[Proof of Theorem \ref{C-cohomThm}.] As already observed above, the space $\mathrm{Hom}_n (C,\mathcal{P}(\lambdavec))$ is open and dense in $\mathrm{\widetilde{Hom}}_n (C,\mathcal{P}(\lambdavec))$, and we have $$\mathrm{\widetilde{Hom}}_n (C,\mathcal{P}(\lambdavec))\cong \mathcal{P}_{\Pic^n C}(\oplus_i \nu_* P(n)^{\otimes \lambda_i},\lambdavec).$$ Let $\mathcal{Z}$ denote the discriminant locus i.e. the complement of  $\mathrm{Hom}_n (C,\mathcal{P}(\lambdavec))$ in $\mathrm{\widetilde{Hom}}_n (C,\mathcal{P}(\lambdavec))$. 

    Now we construct a hypercover that is equipped with the additional structure of a $\Delta S$ object in the category of Deligne-Mumford stacks, and which admits universal cohomological descent. 

To this end, we define spaces $\Tc_0$ and $\mathcal{I}_0$ as certain fibre products.
First, consider the following commutative diagram:

\[
\begin{tikzcd}[column sep=0]
  \Tc_0 \arrow[rr] \arrow[dr,dashed, swap,"\pi_0"] \arrow[dd,swap] &&
C\times \mathcal{P}_{\Pi_i\Pic^{\lambda_i n-1}C}(\oplus_i \nu_*P(\lambda_i n-1),\lambdavec) \arrow[dd] \arrow[dr,"A"] \\
  & \mathcal{P}_{\Pic^{n}C}(\oplus_i \nu_*P(n)^{\otimes \lambda_i},\lambdavec)  \arrow[rr] \arrow[dd]&&
  \mathcal{P}_{\Pi_i\Pic^{\lambda_i n}C}(\oplus_i \nu_*P(\lambda_i n),\lambdavec)  \arrow[dd] \\
\Ic_0 \arrow[rr,] \arrow[dr, dashed, "A"] && C\times \Pi_i \Pic^{\lambda_i n-1}C \arrow{dr}{A}\\
  & \Pic^n C\arrow[rr, "\otimes^{\lambdavec}"] && \Pi_i \Pic^{\lambda_i n}C 
\end{tikzcd}
\]
where the maps above are defined by:
  \begin{gather}
  \otimes^{\lambdavec}: \Pic^n \to \Pi_i \Pic^{\lambda_i n}C\nonumber \\
      L\mapsto (L^{\otimes \lambda_0},\cdots, L^{\otimes \lambda_N})
  \end{gather} henceforth often denoting $(L^{\otimes \lambda_0},\cdots, L^{\otimes \lambda_N})$ by $L^{\otimes \lambdavec}$;
  \begin{gather}\label{eq:otimeslambdavec}
      A: C\times \Pi_i \Pic^{\lambda_i n-1}C \to \Pi_i \Pic^{\lambda_i n}C \nonumber \\
      x, (L_0, \ldots, L_N)\mapsto L_0\otimes \mathcal{O}_C(x), \cdots, L_N\otimes \mathcal{O}_C(x)
  \end{gather} which is the map of `adding a point': $$\Ic_0 \coloneqq \Pic^n\times_{ \Pi_i \Pic^{\lambda_i n}C} (C\times \Pi_i \Pic^{\lambda_i n-1}C)$$ completes the commutative square at the `lower face' of the cube and we abuse notation and still denote the resulting `adding a point' map by $A: \Ic_0\to \Pic^n C$; the `upper face' of the cube consists of spaces which are essentially the space of global sections of suitable Poincare bundles, i.e. the vertical arrows all correspond to taking fibrewise global sections over the moduli of line bundles; and $\Tc_0$, quite like $\Ic_0$, is defined to complete the square on the upper face of the cube, and admits natural map to $\Ic_0$ so that each of the side faces are also naturally commutative squares. Whereas the `adding a point' map on the lower face of the cube adds points to line bundles, on the upper level they effectively add `basepoints' to global sections; in other words $$\pi_0: \Tc_0\to \mathcal{P}_{\Pic^n C}(\oplus_i \nu_* P(n)^{\otimes \lambda_i},\lambdavec)$$ simply captures the notion of adding a basepoint.
  
  We define spaces $\Tc_p$ for all $p\geq 0$ likewise. Consider  the following commutative diagram:

\adjustbox{scale=0.9,center}{%
\begin{tikzcd}[column sep=0]
  \Tc_p \arrow[rr] \arrow[dr,dashed, swap,"\pi_p"] \arrow[dd,swap] &&
C^{p+1}\times \mathcal{P}_{\Pi_i\Pic^{\lambda_i n-(p+1)}C}(\oplus_i \nu_*P(\lambda_i n-(p+1),\lambdavec) \arrow[dd] \arrow[dr,"f_1"] \\
  & \mathcal{P}_{\Pic^{n}C}(\oplus_i \nu_*P(n)^{\otimes \lambda_i},\lambdavec)  \arrow[rr] \arrow[dd]&&
  \mathcal{P}_{\Pi_i\Pic^{\lambda_i n}C}(\oplus_i \nu_*P(\lambda_i n),\lambdavec)  \arrow[dd,"h"] \\
\Ic_p \arrow[rr,] \arrow[dr, dashed, "A"] && C^{p+1}\times \Pi_i \Pic^{\lambda_i n-(p+1)}C \arrow{dr}{A}\\
  & \Pic^n C\arrow[rr, "\otimes^{\lambdavec}"] && \Pi_i \Pic^{\lambda_i n}C 
\end{tikzcd}
}
where $\otimes^{\lambdavec}$ is defined like before; $A$, by abusing notation, now denotes the addition of $(p+1)$ points to a line bundle i.e. \begin{gather}
      A: C^{p+1}\times \Pi_i \Pic^{\lambda_i n-(p+1)}C \to \Pi_i \Pic^{\lambda_i n}C \nonumber \\
      (x_0,\ldots, x_p), (L_0, \ldots, L_N)\mapsto L_0(\sum x_i), \cdots, L_N(\sum x_i);
  \end{gather}
  and moreover observe that we have commutative cubes of the following form for all $p$:
  
  \adjustbox{scale=0.9,center}{%
\begin{tikzcd}[column sep=0.5]
  \Tc_p \arrow[rr] \arrow[dr, swap,"f_i"] \arrow[dd,swap] &&
C^{p+1}\times \mathcal{P}_{\Pi_i\Pic^{\lambda_i n-(p+1)}C}(\oplus_i \nu_*P(\lambda_i n-(p+1),\lambdavec) \arrow[dd] \arrow[dr] \\
  & \Tc_{p-1}  \arrow[rr] \arrow[dd]&&
  C^{p}\times \mathcal{P}_{\Pi_i\Pic^{\lambda_i n-p}C}(\oplus_i \nu_*P(\lambda_i n-p,\lambdavec) \arrow[dd] \\
\Ic_p \arrow[rr,] \arrow[dr, dashed, "A"] && C^{p+1}\times \Pi_i \Pic^{\lambda_i n-(p+1)}C \arrow{dr}{A}\\
  & \Ic_{p-1}\arrow[rr, "\otimes^{\lambdavec}"] && C^p\times \Pi_i \Pic^{\lambda_i n-p}C 
\end{tikzcd}
} and the all the down-right arrows are maps corresponding to adding a point from the $i^{th}$ factor of $C^{p+1}$ i.e. 
\begin{gather*}
  A: C^{p+1} \times \Pi_i \Pic^{\lambda_i n-(p+1)}C \to C^p\times \Pi_i \Pic^{\lambda_i n-p}C \\
  (x_0,\ldots, x_p), (L_0,\ldots, L_N)\mapsto\\ (x_0,\ldots,\widehat{x_i},\ldots, x_p), (L_0(x_i),\ldots, L_N(x_i))
\end{gather*} where $\widehat{x_i}$ implies we delete the $i^{th}$ entry; most importantly, we obtain $f_i:\Tc_p\to \Tc_{p-1}$ as the $i^{th}$ face map of the semisimplicial space $\Tc_{\bullet}$. 

It is easy to see that $\Ic_p \cong C^{p+1}\times \Pic^nC$ for all $p\geq 0$: for $p=0$ the map \begin{gather*}
  \Pic^nC{\times_i \Pi_i \Pic^{\lambda_i n}C}. C\times \Pi_i \Pic^{\lambda_i n-1}C \to C\times \Pic^{n}C\\
  L, (x,(L_0,\cdots, L_N))\mapsto (x,L)
\end{gather*} has a natural inverse thereby giving an isomorphism; for higher values of $p$ the observation follows likewise. Along those lines, it is not hard to show for $p\leq n-2g+1$ that $\Tc_p\cong C^{p+1}\times \mathcal{P}_{\Pic^n C} (\oplus_i \mathcal{E}_i, \lambda_i)$ (the isomorphism is not canonical) for some sub-vector budnles $\mathcal{E}_i\subset \nu_*P(n)^{\otimes \lambda_i}$ for each $i$, of rank $\lambda_i n-g+1 -(p+1)$, in the following way. For any line bundle $L$ on $C$ and any  effective divisor $D$ on $C$ there is a natural injection \begin{gather*}
    H^0(C,L)\to H^0(C,L(D))\\ s\mapsto s(D)
\end{gather*} coming from the short exact sequence of the corresponding locally free sheaves $$0\to L\to L(D)\to \mathcal{O}_D\to 0;$$ by definition we have \begin{gather*}
    \Tc_p: =\{[s_0:\ldots:s_N], ((x_0,\cdots, x_p),[\widetilde{s_0}:\ldots: \widetilde{s_N})): s_i\in H^0(C,L^{\lambda_i}) \text{ where } L\in \Pic^n C,\\ \widetilde{s_i}\in H^0(C,L_i) \text{ where } L_i\in \Pic^{\lambda_i n-(p+1)}C,\\ \widetilde{s_i}(\sum x_j)=t^{\lambda_i} s_i \text{ for some } t\in \mathbb{G}_m \text{ and for all }i\}.
\end{gather*} Now fix a general unordered $(p+1)$ subset $\{z_0,\cdots, z_p\}\in C$ and define \begin{gather*}
    M_p:= \{[\widetilde{s_0}:\ldots: \widetilde{s_N}]: \widetilde{s_i}\in H^0(C,L_i) \text{ where } L_i\in \Pic^{\lambda_i n-(p+1)}C, \text{ there exists } t\in \mathbb{G}_m\\ \text{ such that for each }i,\,\,\, \widetilde{s_i}(\sum_j c_j)=t^{\lambda_i} s_i,\\ \text{ where } s_i\in H^0(C,L^{\lambda_i}), \,\,\,\,  L\in \Pic^n C\}
\end{gather*}
Then one can define a bijective (easy to check) morphism that only depends on the choice of the points $z_0,\ldots, z_p \in C$: \begin{gather}
    C^{p+1} \times M_p\to \Tc_p \nonumber \\ (x_0,\ldots, x_p), [\widetilde{s_0}:\ldots: \widetilde{s_N}]\mapsto\nonumber\\ [\widetilde{s_0}(\sum_j x_j):\ldots: \widetilde{s_N}(\sum_j x_j)], ((x_0,\cdots, x_p), [\widetilde{s_0}:\ldots: \widetilde{s_N}])
\end{gather} where the only non-trivial part is to check that the map is well-defined and that fact follows from the observation that degree $0$ line bundles always have $\lambda^{th}$ roots for all positive integers $\lambda$. Now $M_p$ is naturally a fibre bundle over $\Pic^n C$, in fact it is a weighted projectivization of the sub-vector bundle of $\oplus \nu_* P(n)^{\otimes \lambda_i}$ whose fibres are given by $(N+1)$-tuples of sections in $\oplus \nu_* P(n)^{\otimes \lambda_i}$ having common zeroes at $z_0,\ldots, z_p$; we denote that sub-vector bundle by $\oplus \nu_* P(n)^{\otimes \lambda_i}_{z_0+\cdots+z_p}$. In other words $$M_p\cong \mathcal{P}_{\Pic^n C}(\oplus_i \nu_* P(n)^{\otimes \lambda_i}_{z_0+\cdots+z_p},\lambdavec),$$ and for all $0\leq p\leq n-2g+1$ we abuse notation and denote the fibre bundle by $\rho$ i.e. $$\rho:M_p\to \Pic^n C.$$

   
   Deviating from the norm, we set $\Tc_{-1} \coloneqq  \mathrm{\widetilde{Hom}}_n (C,\mathcal{P}(\lambdavec))\cong  \mathcal{P}_{\Pic^n C}(\oplus_i \nu_* P(n)^{\otimes \lambda_i},\lambdavec)$) (whereas the norm would be to set $\Tc_{-1}$  as $\mathcal{Z}$- this a minor deviation just to simplify the notations involved in the subsequent computation). 
   From \cite[Definition 2.1]{Banerjee}, $\Tc_{\bullet}$ satisfies the conditions of being a $\Delta S$ object. We therefore use \cite[Theorem 1.2]{Banerjee} to get a second quadrant spectral sequence that reads as: 
    \begin{gather*}
        E_1^{-p,q}= H^{q-2pN}\big(\Tc_p\otimes \mathit{sgn}_{p+1}\big)^{{S}_{(p+1)}}(-(p+1)N) \implies H^{q+p}(\mathrm{Hom}_n (C,\mathcal{P}(\lambdavec)).
    \end{gather*}
     where $\mathit{sgn}_{p+1}$ denote the sign action of the symmetric group $S_{p+1}$ on the $(p+1)$ factors of $T_p$ by permutation; and the differentials of this spectral sequence are given by the alternating sum of the Gysin pushforwards induced by the face maps. So we split the rest of the proof into the following parts:
     \begin{enumerate}
        \item \textit{Computing the $E_1$ terms.} To this end note that a complete understanding of $$H^*(\mathcal{P}_{\Pic^n C}(\oplus_i \nu_* P(n)^{\otimes \lambda_i}_{z_0+\cdots+z_p},\lambdavec))$$ (at least in a stable range) gives us full knowledge of the $E_1$ terms (in that range).  The Chern classes of $\nu_* P(n)^{\otimes \lambda_i}$ can be computed for example, directly using Grothendieck-Riemann-Roch, or via ad-hoc methods: $$c_i(\mathcal{E}_{\lambda_i}) = (-1)^i {\theta^i \over i!}\,\,\, i=0, \ldots, g$$
     where $\theta$ is the fundamental class of the theta divisor (several proofs are available in \cite[Sections 4, 5, Chapter VII and Section 1, Chapter VIII]{ACGH}). Using the Whitney sum formula we can express the twisted Chern classes in terms of $\theta$.
     
     
     \noindent In turn, let $N_0 \coloneqq  (n-g+1)(N+1)$, which is the dimension of the fibres of $\mathcal{E}\to \Pic^n(C)$,  and let $h$ denote the relative hyperplane class i.e. $h= c_1(\mathcal{O}_{\rho}(1)) \in H^2(\mathcal{P}_{\Pic^n C}(\oplus_i \nu_* P(n)^{\otimes \lambda_i}, \lambdavec)$, then $H^*(\mathcal{P}_{\Pic^n C}(\oplus_i \nu_* P(n)^{\otimes \lambda_i}, \lambdavec)$, which is an algebra on $H^*(\Pic^n (C))\cong\wedge(H^1(C))$, is given by (using \eqref{eqn:projectivebundleformula_Coho}): \begin{gather}\label{eq3.9}
     H^*(\mathcal{P}_{\Pic^n C}(\oplus_i \nu_* P(n)^{\otimes \lambda_i}, \lambdavec)\nonumber \\\cong \nonumber \\{H^*(\Pic^n (C))[h]\over {h^{N_0}+\rho^*c^{\vec{\eta}}_1(\oplus_i \nu_* P(n)^{\otimes \lambda_i},\lambdavec) h^{N_0-1}+ \ldots + \rho^*c^{\vec{\eta}}_g(\oplus_i \nu_* P(n)^{\otimes \lambda_i},\lambdavec)h^{N_0-g}}}.
    \end{gather}    
     Let $p$ be such that $n-p\geq 2g$ and let $N_p \coloneqq  (n-p-g+1)(N+1) = N_0-p(N+1)$, the dimension of the fibres of $\mathcal{P}_{\Pic^n C}(\oplus_i \nu_* P(n)^{\otimes \lambda_i}_{z_0+\cdots+z_p},\lambdavec)\to \Pic^{n}(C)$, then combining \eqref{eqn:projectivebundleformula_Coho} and \eqref{eq3.9} we have a complete description of the $E_1$ terms of the spectral sequence above. We remark here that since $n-p\geq 2g$,  we have that $N_p - g= (n-p-g+1)(N+1)-g\geq  N$. This remark will be useful later.
        
        \item \textit{Computing the differentials $d_1^p: E_1^{-p,*}\to E_1^{-(p-1),*+2N}$.}
            
            Following previously introduced notations, let $h= c_1(\mathcal{O}_{\rho_{n}}(1))$, and for all $p$ satisfying $n-p\geq 2g$, let $$\iota: \mathcal{P}_{\Pic^n C} (\oplus_i \nu_* P(n)^{\otimes \lambda_i}_{z_0+\cdots+ z_{p-1}},\lambdavec)\to \mathcal{P}_{\Pic^n C} (\oplus_i \nu_* P(n)^{\otimes \lambda_i}_{z_0+\cdots+ z_p},\lambdavec)$$ denote the closed embedding induced by adding a basepoint $x$ (an abuse of notation that won't cause any confusion down the way). Note that $\iota$ is fibrewise linear embedding, up to translation of $\Pic^n(C)$ by $x$. Finally, let $e\in H^2(C)$ be the class of a point, $\mathbb{1}$ the fundamental class of $C$, and let $\gamma_1,\ldots, \gamma_{2g}$ be the standard basis of $H^1(C)$ and because $H^*(\Pic^n(C)) \cong \wedge H^1(C)$, let $\overline{\gamma_1},\ldots, \overline{\gamma_{2g}}$ be the image of $\gamma_1,\ldots, \gamma_{2g}$ under the aforementioned isomorphism. 
            
            First, we observe that \begin{align*}
                d_1^{1}: H^*(C\times \mathcal{P}_{\Pic^n C} (\oplus_i \nu_* P(n)^{\otimes \lambda_i}_{z_0},\lambdavec)) \to H^*(\mathcal{P}_{\Pic^n C} (\oplus_i \nu_* P(n)^{\otimes \lambda_i},\lambdavec))\\ [C\times \mathcal{P}_{\Pic^n C} (\oplus_i \nu_* P(n)^{\otimes \lambda_i}_{z_0},\lambdavec)] \mapsto h^N\\ e\mapsto h^{N+1}\\ \gamma_i\mapsto \overline{\gamma_i}h^N, & \text{ for all }i.
            \end{align*} is a map of $H^*(\mathcal{P}_{\Pic^n C} (\oplus_i \nu_* P(n)^{\otimes \lambda_i},\lambdavec))$-modules, and in turn $$\iota^*\alpha + e \iota^*\beta + \sum_{i=1}^{2g} \gamma_i \iota^* \gamma_i \xmapsto{d_1^1} \alpha h^N+ \beta h^{N+1}+ \sum_{i=1}^{2g}  \overline{\gamma_i}\gamma_i h^N,$$ where $\alpha, \beta, \gamma_1,\ldots, \gamma_{2g} \in  H^*(\mathcal{P}_{\Pic^n C} (\oplus_i \nu_* P(n)^{\otimes \lambda_i},\lambdavec))$.
            Indeed, the justification for the formula for $d_1^1$ in the previous case of $C=\Pb^1$ holds almost verbatim here. We know $$\iota^*: H^*(\mathcal{P}_{\Pic^n C} (\oplus_i \nu_* P(n)^{\otimes \lambda_i},\lambdavec)) \to H^*(\mathcal{P}_{\Pic^n C} (\oplus_i \nu_* P(n)^{\otimes \lambda_i}_{z_0},\lambdavec))$$ is a surjection; next, for a fixed point $x\in C$, the image $t_x^r(\mathcal{P}_{\Pic^n C} (\oplus_i \nu_* P(n)^{\otimes \lambda_i}_{z_0},\lambdavec)$) is rationally equivalent, and in turn cohomologous, to (a multiple of) $h^{N+1}$, and finally, that the image of the fundamental class $[C\times \mathcal{P}_{\Pic^n C} (\oplus_i \nu_* P(n)^{\otimes \lambda_i}_{z_0},\lambdavec)]\in H^0(C \times \mathcal{P}_{\Pic^n C} (\oplus_i \nu_* P(n)^{\otimes \lambda_i}_{z_0},\lambdavec))$ is rationally equivalent, and thus cohomologous, to (a multiple of) $h^N$, can be seen as in the following way. 
            Recall that a Poincaré bundle $P(n)$ is $\nu$-relatively very ample for all $n\geq 2g-1$, which in turn induces a relative embedding of $C\times \Pic^n(C)\xrightarrow{i_n}\Pb (\nu_* P(n))$ over $\Pic^n(C)$ and we have a natural sequence of maps over $\Pic^n C$
            \[
            \begin{tikzcd}
    C\times \Pic^n C \arrow[r, hook] \arrow[dr]
    & \Pb(\nu_* P(n)) \arrow[d] \arrow[r]& \mathcal{P}_{\Pic^n C}(\oplus_i \nu_* P(n)^{\lambda_i},\lambdavec) \arrow[dl]\\
    &\Pic^n C \end{tikzcd}
            \] and we continue to denote the composition mapping $C\times \Pic^n C$ to $\mathcal{P}_{\Pic^n C}(\oplus_i \nu_* P(n)^{\lambda_i},\lambdavec)$ over $\Pic^n C$ by $i_n$.
        This makes $i_n(C\times \Pic^n(C))$ in $\mathcal{P}_{\Pic^n C} (\oplus_i \nu_* P(n)^{\otimes \lambda_i},\lambdavec)$ homologous to (a scalar multiple of) the (relative, over the base $\Pic^n C$) Poincaré dual of $h \in H^2(\mathcal{P}_{\Pic^n C} (\oplus_i \nu_* P(n)^{\otimes \lambda_i},\lambdavec))$.  In turn, the image of the $[C\times \mathcal{P}_{\Pic^n C} (\oplus_i \nu_* P(n)^{\otimes \lambda_i}_{z_0},\lambdavec)]$ under the Gysin map ${f_0}_*$ is given by \begin{align*}
                {f_0}_*\big([C\times \mathcal{P}_{\Pic^n C} (\oplus_i \nu_* P(n)^{\otimes \lambda_i}_{z_0},\lambdavec)]\big) =h^{N+1}\frown i_n(C\times \Pic^n(C))\\ =h^N.
            \end{align*}
            Yet again, for the sake of simplicity we won't bother ourselves with the scalar multiples, which is fine because we're working over $\Q$.
            Noting that $$\overline{\gamma_i}(e-h)+ h(\gamma_i-\overline{\gamma_i}) = \gamma_i h- e \overline{\gamma_i},$$ it is now easy to check that the kernel of $d_1^1$ is given by: \begin{align*}
                H^*(\mathcal{P}_{\Pic^n C} (\oplus_i \nu_* P(n)^{\otimes \lambda_i}_{z_0},\lambdavec))(e-h) [2N]\\ \bigoplus    H^*(\mathcal{P}_{\Pic^n C} (\oplus_i \nu_* P(n)^{\otimes \lambda_i}_{z_0},\lambdavec))(\gamma_i-\overline{\gamma_i})[2N], &&(i= 1, \ldots, 2g)
            \end{align*} where $[2N]$ denotes a shift in the cohomological degree by $2N$, and which is viewed as a $\iota^* H^*(\mathcal{P}_{\Pic^n C} (\oplus_i \nu_* P(n)^{\otimes \lambda_i},\lambdavec))\cong H^*(\mathcal{P}_{\Pic^n C} (\oplus_i \nu_* P(n)^{\otimes \lambda_i}_{z_0},\lambdavec))$-module. The cokernel of $d^1_1$, which forms $E_2^{0,*}$ is given by  $$H^*(\Pic^n(C))[h]\over h^N$$(where note that as remarked before $r<N_0-g$, see \eqref{eq3.9}).
            
            Now we work out the differential for $p=2$ by computing the Gysin pushfowards by each of the face maps: \begin{gather*}
                {f_0}_*(\mathbb{1}\otimes e) = e h^N, \,\,\,\,  {f_1}_*(\mathbb{1}\otimes e) = h^{N+1}  \implies d_1^2 (\mathbb{1}\otimes e) =  (e-h) h^{N}, \\     {f_0}_*(e\otimes \gamma_i) = \gamma_i h^{N+1}, \,\,\,\,     {f_1}_*(e\otimes \gamma_i) = e \overline{\gamma_i}h^N \implies d^2_1(e\otimes \gamma_i)= (\gamma_ih- e\overline{\gamma_i}) h^N,\\
                {f_0}_*(\mathbb{1}\otimes \gamma_i) = \gamma_i h^N, \,\,\,\,    {f_1}_*(\mathbb{1}\otimes \gamma_i) = \overline{\gamma_i} h^N \implies d^2_1(\mathbb{1}\otimes \gamma_i) =  (\gamma_i -  \overline{\gamma_i}) h^N , \\  d_1^2(\gamma_i\gamma_j)=0,
            \end{gather*} where the last equality follows form the fact that on $\Sym^p H^1(C)$ for $p\geq 2$, the alternating sum of face maps is, by definition, $0$.
            Recalling our earlier remark that $N<N_1-g$, we see that the $E_2^{-1,*}$ terms, as an $H^*(\mathcal{P}_{\Pic^n C} (\oplus_i \nu_* P(n)^{\otimes \lambda_i}_{z_0+z_1},\lambdavec))$-module, are given by:
            \begin{align*}
                {H^*(\Pic^n(C);\Q(-N))[h]\over h^N}(e-h)[2N] \\ \bigoplus_{1\leq i\leq 2g} {H^*(\Pic^n(C);\Q(-r))[h]\over h^N}(\gamma_i-\overline{\gamma_i})[2N].
            \end{align*}Whereas the kernel of $d_1^2$ is generated by exactly what one expects: as a $H^*(\mathcal{P}_{\Pic^n C} (\oplus_i \nu_* P(n)^{\otimes \lambda_i}_{z_0+z_1},\lambdavec))$-module, we have  \begin{align*}
                \mathrm{Ker}(d_1^2)=  \bigoplus_{1\leq i\leq 2g} H^*(\mathcal{P}_{\Pic^n C} (\oplus_i \nu_* P(n)^{\otimes \lambda_i}_{z_0+z_1},\lambdavec))\big(e\otimes \gamma_i  - 1\otimes \gamma_i h +\mathbb{1}\otimes e \overline{\gamma_i}\big)[4N]\\ \bigoplus_{1\leq i,j\leq 2g} H^*(\mathcal{P}_{\Pic^n C} (\oplus_i \nu_* P(n)^{\otimes \lambda_i}_{z_0+z_1},\lambdavec))(\gamma_i\gamma_j)[4N].
            \end{align*}
            For $p= 3$ we have $d_1^{3}: E_1^{-3,*}\to E_1^{-2,*}$ given by:
            \begin{gather*}
                d_1^3 (\mathbb{1}\otimes e \otimes \gamma_i) =   e\otimes \gamma_i h^N- \mathbb{1}\otimes \gamma_i h^{N+1} +\mathbb{1}\otimes e \overline{\gamma_i} h^{N}\impliedby \begin{cases}
                    {f_0}_*(\mathbb{1}\otimes e \otimes \gamma_i) = e\otimes \gamma_i h^N,\\ {f_1}_*(\mathbb{1}\otimes e\otimes \gamma_i) = \mathbb{1}\otimes \gamma_i h^{N+1}\\ {f_2}_*(\mathbb{1}\otimes e\otimes \gamma_i) = \mathbb{1}\otimes e \overline{\gamma_i} h^{N}
                \end{cases}\\
                d_1^3(e\otimes \gamma_i\gamma_j) = \gamma_i\gamma_j h^{N+1}, \\
                d_1^3(\mathbb{1}\otimes \gamma_i\gamma_j) = \gamma_i\gamma_j h^N, \\    d_1^3(\gamma_i\gamma_j\gamma_k)=0,
            \end{gather*} where, for the last three equalities, recall again that on $\Sym^p H^1(C)$ for $p\geq 2$, the alternating sum of face maps is, by definition, $0$. Therefore the $E_1^{-2,*}$ terms defined by $\mathrm{Ker}(d^2_1)/ \mathrm{Coker}(d^3_1)$ is given by: 
            \begin{align*}
                \bigoplus_{1\leq i\leq 2g}{H^*(\Pic^n(C);\Q(-2N))[h]\over h^N}\big(e\otimes \gamma_i  - 1\otimes \gamma_i h +\mathbb{1}\otimes e \overline{\gamma_i}\big)[4N] \\ \bigoplus_{1\leq i,j\leq 2g}{H^*(\Pic^n(C);\Q(-2N))[h]\over h^N}(\gamma_i\gamma_j)[4N].
            \end{align*} 
            The formula for the differentials in the case of $p\geq 3$ mimics that of $p=3$, and we have:
            
            \begin{gather*}
                \mathbb{1}\otimes e\otimes c^{\vec{\eta}}_1\ldots c^{\vec{\eta}}_{p-2} \mapsto \Big((e\otimes c^{\vec{\eta}}_1\ldots c^{\vec{\eta}}_{p-2}) - (\mathbb{1} \otimes c_1\ldots c_{p-2}) h\Big)h^N,\\
                e\otimes c^{\vec{\eta}}_1\ldots c^{\vec{\eta}}_{p-1} \mapsto c^{\vec{\eta}}_1\ldots c^{\vec{\eta}}_{p-1} h^{N+1},\\  \mathbb{1} \otimes c^{\vec{\eta}}_1\ldots c^{\vec{\eta}}_{p-1} \mapsto c^{\vec{\eta}}_1\ldots c^{\vec{\eta}}_{p-1} h^{r} \\ c^{\vec{\eta}}_1\ldots c^{\vec{\eta}}_p \mapsto 0
            \end{gather*}
            It is now easy to check that \begin{gather*}
                \mathrm{Ker}(d_1^p)/\mathrm{Coker} (d_1^{p+1}) =\\ \bigoplus_{1\leq i\leq 2g}{H^*(\Pic^n(C);\Q(-pN))[h]\over h^N}\big(e\otimes c^{\vec{\eta}}_1\ldots c^{\vec{\eta}}_{p-1} - \mathbb{1} \otimes c^{\vec{\eta}}_1\ldots c^{\vec{\eta}}_{p-1} \big)[2pN]\\ \bigoplus_{1\leq i,j\leq 2g}{H^*(\Pic^n(C);\Qlb(-pN))[h]\over h^N}(c^{\vec{\eta}}_1\ldots c^{\vec{\eta}}_p)[2pN].
            \end{gather*}
            
        \end{enumerate}
        
        Now we are left with analysing the resulting $E_2$ page. That the differentials on the $E_2$ page vanish for $p\leq n-2g$ follow simply from weight considerations- the space $\Tc_{\bullet}$ consists of smooth projective varieties and thus their $n^{th}$ cohomology is pure of weight $n$. Now observe the following: we have an equality \begin{gather*}
            \mathrm{R}\Gamma_c(\mathcal{P}_{\Pic^n C} (\oplus_i \nu_* P(n)^{\otimes \lambda_i},\lambdavec),C^{\bullet}(\underline{\Ql}_{\mathcal{P}_{\Pic^n C} (\oplus_i \nu_* P(n)^{\otimes \lambda_i},\lambdavec)})) =\\ \mathrm{R}\Gamma_c (\mathcal{P}_{\Pic^n C} (\oplus_i \nu_* P(n)^{\otimes \lambda_i},\lambdavec), j_{!}\underline{\Ql}_{\mathrm{Hom}_n (C,\mathcal{P}(\lambdavec))}) 
        \end{gather*}in the derived category of constructible sheaves over $\mathcal{P}_{\Pic^n C} (\oplus_i \nu_* P(n)^{\otimes \lambda_i},\lambdavec)$ where $C^{\bullet}(\Q_{\mathcal{P}_{\Pic^n C} (\oplus_i \nu_* P(n)^{\otimes \lambda_i},\lambdavec)})$ denotes the complex \begin{align}
0\to j_{!}j^*\underline{\Ql}_{\Tc_{-1}}\to \underline{\Ql}_{\Tc_{-1}}\to \pi_{0_*}\pi_0^*\underline{\Ql}_{\Tc_{-1}} \to (\pi_{1_*}\pi_1^*\underline{\Ql}_{\Tc_{-1}}\otimes \mathit{sgn}_{2})^{{S}_{2}} \cdots \to \nonumber \\ \cdots \to  (\pi_{p_*}\pi_p^* \underline{\Ql}_{\Tc_{-1}}\otimes \mathit{sgn}_{p+1})^{{S}_{p+1}}\to \cdots 
    \end{align} 
    on the other hand, for any $m\in \mathbb{N}$ we have \begin{gather*}
        \mathrm{R}^i\Gamma_c(\mathcal{P}_{\Pic^n C} (\oplus_i \nu_* P(n)^{\otimes \lambda_i},\lambdavec), C^{\bullet}(\underline{\Ql}_{\mathcal{P}_{\Pic^n C} (\oplus_i \nu_* P(n)^{\otimes \lambda_i},\lambdavec)}))\cong \\ \mathrm{R}^i\Gamma_c(\mathcal{P}_{\Pic^n C} (\oplus_i \nu_* P(n)^{\otimes \lambda_i},\lambdavec), C^{\bullet}(\underline{\Ql}_{\mathcal{P}_{\Pic^n C} (\oplus_i \nu_* P(n)^{\otimes \lambda_i},\lambdavec)})/\tau_{\geq m} C^{\bullet}(\underline{\Ql}_{\mathcal{P}_{\Pic^n C} (\oplus_i \nu_* P(n)^{\otimes \lambda_i},\lambdavec)})
    \end{gather*} for all $i\geq 2(m+1)-2N$, where $\tau_{\geq m} C^{\bullet}(\Q_{\mathcal{P}_{\Pic^n C} (\oplus_i \nu_* P(n)^{\otimes \lambda_i},\lambdavec)})$ denotes the truncated complex up to the $(N-1)$ term and this is because $\tau_{\geq m} C^{\bullet}(\Q_{\mathcal{P}_{\Pic^n C} (\oplus_i \nu_* P(n)^{\otimes \lambda_i},\lambdavec)})$ is supported on complex codimension $m$ in $\mathcal{P}_{\Pic^n C} (\oplus_i \nu_* P(n)^{\otimes \lambda_i},\lambdavec)$. Therefore the cohomology of $\mathrm{Hom}_n (C,\mathcal{P}(\lambdavec))$ up to degree $n-2g$ is solely dictated by the $E_2$ page.
    
            To this end, let $$t:= (e-h)$$ which has degree $(-1, 2N+2)$ and let $$\alpha_i:= \gamma_i-\overline{\gamma_i},\,\,\,\,\, i=1,\ldots, 2g$$ which has degree $(-1,2N+1)$. Clearly for $3\leq p\leq n-2g$, the element $t\alpha_{i_1}\ldots \alpha_{i_p}$, which is of degree $(-(p+1), 2N+2+p(2N+1))$, when expanded, gives us  \begin{flalign*}
                t\alpha_{i_1}\ldots \alpha_{i_p}\\ &= (e-h)(c_{i_1}-\overline{c_{i_1}})\ldots (c_{i_p}-\overline{c_{i_p}}) \\&= (e-h)\prod_{j=1}^{p}c_{i_j} +\Big\{ \text{lower order terms as a polynomial on } c_{i_1},\ldots, c_{i_p}\Big\} \\ & = (e-h)\prod_{j=1}^{p}c_{i_j}
            \end{flalign*} because the lower order terms are all $0$ in $$\big(H^2(C)\oplus H^0(C)\big)\bigotimes \Sym^p H^1(C)\otimes H^*(\Pic^{n-(p+1)}(C))[h]/h^N,$$ thanks to the alternating action of ${S}_{p+1}$. Whereas  $\alpha_{i_1}\ldots \alpha_{i_{p+1}}$, which is of degree $(-(p+1), (p+1)(2N+1))$,  when expanded, gives us
            \begin{flalign*}
                \alpha_{i_1}\ldots \alpha_{i_{p+1}}\\ &= (c_{i_1}-\overline{c_{i_1}})\ldots (c_{i_{p+1}}-\overline{c_{i_{p+1}}}) \\&= \prod_{j=1}^{p+1}c_{i_j} +\Big\{ \text{lower order terms as a polynomial on } c_{i_1},\ldots, c_{i_{p+1}}\Big\} \\ & = \prod_{j=1}^{p+1}c_{i_j}
            \end{flalign*}  because again, the lower order terms are all $0$ for the exact same reason cited above.
            
            Now as for $p=2$,  we have \begin{align*}
                t\alpha_i = (e-h)(\gamma_i-\overline{\gamma_i}) = e\gamma_i- \gamma_ih+e\overline{\gamma_i}+ h\overline{\gamma_i} \\ =e\gamma_i- \gamma_ih+e\overline{\gamma_i}
            \end{align*} because the alternating action of ${S_2}$ kills $H^0(C^2)\otimes H^*(\mathcal{P}_{\Pic^n C} (\oplus_i \nu_* P(n)^{\otimes \lambda_i},\lambdavec))$, and in turn, $h\overline{\gamma_i}$.
            This give us the algebra structure on the $E_2$ page for $p\leq n-2g$ and thus completes the proof of Theorem \ref{C-cohomThm}.
    
    \end{proof}

    \bigskip


    \section{Arithmetic moduli of generalized elliptic surfaces with prescribed structures}\label{sec:Gen_Ell}

    In this section, we prove new sharp enumerations on the number of generalized elliptic fibrations over $C=\Pb^1_{\Fb_q}$ with prescribed level structures or multiple marked points by applying the exact \'etale cohomology Theorem \ref{P-cohomThm} (followed by the exact point count Theorem \ref{Exac_Count}) to the relevant moduli stacks formulated as Hom-stacks similar to \cite[\S 3]{HP}. In all our applications, results over higher genus $C$ where we acquire the corresponding stable \'etale cohomology followed by stable point counts as in Corollary \ref{Ell_Count} is straightforward.


    \subsection*{Arithmetic moduli of generalized elliptic fibrations over \texorpdfstring{$C$}{C} with prescribed level structures}\label{subsec:ptcount_t}

    We enumerate the number of generalized elliptic curves over global function fields with prescribed level structures by first extending the notion of (nonsingular) elliptic curves that admits desired level structures. By the work of Deligne and Rapoport \cite{DR} (summarized in \cite[\S 2]{Conrad2} and also in \cite[\S 2]{Niles}), we consider the generalized elliptic curves over $C_K$ with $[\Gamma]$--level structures over a field $K$ (focusing on $K=\Fb_q$). 

    Recall that a level structure $[\Gamma_1(m)]$ on an elliptic curve $E$ is a choice of point $P \in E$ of exact order $m$ in the smooth part of $E$ such that over every geometric point of the base scheme every irreducible component of $E$ contains a multiple of $P$ (see \cite[\S 1.4]{KM}). And a level structure $[\Gamma(2)]$ on an elliptic curve $E$ is a choice of isomorphism $\phi : \Zb/2\Zb \oplus \Zb/2\Zb \to E(2)$ where $E(2)$ is the scheme of 2-torsion Weierstrass points (i.e., kernel of the multiplication-by-2 map $[2] : E \to E$) (see \cite[II.1.18 \& IV.2.3]{DR}).

    The following Proposition shows that the fine modular curves of the above level structures are isomorphic to the weighted projective stacks $\Pc(a,b)$ under mild condition on the characteristic of the base field $K$. 



    \begin{prop}\label{prop:Moduli_t}
    The moduli stack $\Me[\Gamma]$ of generalized elliptic curves with $[\Gamma]$-level structure is isomorphic to the following over a field $K$:
        \begin{enumerate}

        \item if $\mathrm{char}(K) \neq 2$, the tame Deligne--Mumford moduli stack of generalized elliptic curves with $[\Gamma_1(2)]$-structures is isomorphic to 
        $$(\Me[\Gamma_1(2)])_K \cong [ (\Spec~K[a_2,a_4]-(0,0)) / \Gb_m ] = \Pc_K(2,4),$$  

        \item if $\mathrm{char}(K) \neq 3$, the tame Deligne--Mumford moduli stack of generalized elliptic curves with $[\Gamma_1(3)]$-structures is isomorphic to 
        $$(\Me[\Gamma_1(3)])_K \cong [ (\Spec~K[a_1,a_3]-(0,0)) / \Gb_m ] = \Pc_K(1,3),$$  

        \item if $\mathrm{char}(K) \neq 2$, the tame Deligne--Mumford moduli stack of generalized elliptic curves with $[\Gamma_1(4)]$-structures is isomorphic to 
        $$(\Me[\Gamma_1(4)])_K \cong [ (\Spec~K[a_1,a_2]-(0,0)) / \Gb_m ] = \Pc_K(1,2),$$ 

        \item if $\mathrm{char}(K) \neq 2$, the tame Deligne--Mumford moduli stack of generalized elliptic curves with $[\Gamma(2)]$-structures is isomorphic to 
        $$(\Me[\Gamma(2)])_K \cong [ (\Spec~K[a_2,a_2]-(0,0)) / \Gb_m ] = \Pc_K(2,2),$$

        \item if $\mathrm{char}(K) \nmid m$, the fine moduli space of generalized elliptic curves with $[\Gamma_1(m)]$-structures for $m=5,6,7,8,9,10 \mathrm{~or~} 12$ is isomorphic to 
        $$(\Me[\Gamma_1(m)])_K \cong [ (\Spec~K[a_1,a_1]-(0,0)) / \Gb_m ] = \Pc_K(1,1) \iso \Pb^1,$$ 

        \end{enumerate}   
        where $\lambda \cdot a_i=\lambda^i a_i$ for $\lambda \in \Gb_m$ and $i = 1,2,3,4$. Thus, the $a_i$'s have degree $i$ respectively. Moreover, the discriminant divisors of $(\Me[\Gamma])_K \cong \Pc_K(a,b)$ as above have degree 12. 
        
    \end{prop}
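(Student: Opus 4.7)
The plan is to construct, for each level structure $[\Gamma]$ in the statement, a universal normal-form Weierstrass presentation of generalized elliptic curves equipped with $[\Gamma]$-structure, and to read off the residual $\Gb_m$-action on the remaining coefficients. The overall strategy mirrors the case $\Me \iso \Pc(4,6)$ recalled in Example \ref{exmp:Mbar_11}. The starting point is the classical fact, worked out in \cite{DR}, that every generalized elliptic curve $E/S$ admits, Zariski-locally on $S$, a generalized Weierstrass presentation
\[
y^2 + a_1 xy + a_3 y = x^3 + a_2 x^2 + a_4 x + a_6,
\]
and two such tuples define isomorphic curves precisely when they differ by the affine-linear substitution $x \mapsto u^2 x + r$, $y \mapsto u^3 y + u^2 s x + t$, with $u \in \Gb_m$ and $r,s,t$ sections of the structure sheaf of $S$. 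Under the residual $\Gb_m$-scaling each $a_i$ has weight $i$. The role of a level structure is exactly to rigidify the affine parameters $r,s,t$, leaving only the $\Gb_m$-scaling as the residual automorphism of the presentation.

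The heart of the proof is then to carry out this rigidification case by case. For $[\Gamma_1(m)]$ I would pin down the distinguished order-$m$ section at the origin $(0,0)$, which, together with the tangent and $m$-torsion conditions, cuts out explicit relations on the $a_i$ and fixes $r,s,t$. For $m = 2$ (char $\neq 2$) the $2$-torsion condition forces $a_3=a_6=0$ and the remaining $y$-translation kills $a_1$, leaving $y^2 = x^3 + a_2 x^2 + a_4 x$ with weights $(2,4)$. For $m = 3$ (char $\neq 3$) the flex condition at the marked $3$-torsion point forces $a_2 = a_4 = a_6 = 0$, leaving $y^2 + a_1 xy + a_3 y = x^3$ with weights $(1,3)$. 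For $m=4$ (char $\neq 2$) the Tate normal form, after imposing that $(0,0)$ has order exactly $4$, reduces to two parameters of weights $(1,2)$. For $[\Gamma(2)]$ with char $\neq 2$ a full $2$-torsion basis yields a Legendre-type presentation with two parameters of equal weight $2$. For $m\in\{5,6,7,8,9,10,12\}$ the order-$m$ condition imposed on the Tate normal form cuts out a one-parameter family with trivial residual scaling, and the quotient is classically isomorphic to $\Pb^1 = \Pc(1,1)$.

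The main obstacle I anticipate is justifying that each of these presentations extends to an isomorphism of proper Deligne--Mumford stacks, not merely of the open loci of smooth elliptic curves: one must verify that the normal forms extend over the cuspidal (N\'eron polygon) boundary of the Deligne--Rapoport compactification. I would handle this by base-changing to the universal Tate curve over $\Spec \Zb[[q]]$, checking by a direct calculation that the prescribed Weierstrass data extend over the cusps, and then invoking the properness and separatedness of both $\Me[\Gamma]$ (by \cite{DR}) and of $\Pc(a,b)$, together with the valuative criterion, to conclude that the morphism defined on the modular open locus extends uniquely to an isomorphism of stacks. Once this is in hand, the final discriminant statement is routine: the universal Weierstrass discriminant $\Delta \in \Zb[a_1,\ldots,a_6]$ is weighted-homogeneous of total weight $12$, so substituting each of the normal forms above yields a section of $\Oc_{\Pc(a,b)}(12)$, hence a divisor of weighted degree $12$.
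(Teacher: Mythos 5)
Your proposal is correct in spirit but takes a genuinely different route than the paper. The paper's proof is essentially citation-based: for each $[\Gamma]$ it points to an explicit universal Weierstrass presentation already in the literature — \cite[\S 1.3]{Behrens} for $[\Gamma_1(2)]$, \cite[Proposition 4.5]{HMe} for $[\Gamma_1(3)]$, \cite[Example 2.1]{Meier} for $[\Gamma_1(4)]$, \cite[Proposition 7.1]{Stojanoska} for $[\Gamma(2)]$, and \cite[Example 2.5]{Meier} for $m = 5,\dotsc,12$ — and then reads off the stacky isomorphism onto $\Pc_K(a,b)$, records tameness from Remark~\ref{rmk:wtprojtame}, and observes that the cited universal families have discriminant of $\Gb_m$-weight $12$. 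You instead re-derive the normal forms from the generalized Weierstrass equation, using the level structure to rigidify the translation parameters $r,s,t$ and leave only the residual $\Gb_m$-scaling. That is how the cited papers actually establish their results, so your approach is legitimate; what it buys is a self-contained proof, and what it costs is the full burden of verifying the extension of each normal form over the cuspidal (N\'eron polygon) boundary, which you correctly identify as the main obstacle and sketch via the Tate curve and the valuative criterion — the paper simply inherits these extensions from the references. Your discriminant argument (weighted-homogeneity of degree $12$) matches the paper's.

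Two details to watch if you carry this out: for $[\Gamma_1(4)]$ the usual Tate normal form over-normalizes and kills the residual $\Gb_m$-action; you must only partially normalize, landing on the two-parameter family $y^2 + a_1 xy + a_1 a_2 y = x^3 + a_2 x^2$ with weights $(1,2)$, rather than the scheme-theoretic Tate form. And for $m \in \{5,\dotsc,12\}$, your phrase ``one-parameter family with trivial residual scaling'' describes the \emph{open} modular curve; the claimed isomorphism to $\Pb^1 \cong \Pc_K(1,1)$ concerns the Deligne--Rapoport compactification and should be presented as a two-parameter weight-$(1,1)$ $\Gb_m$-quotient (equivalently, the homogenization of the affine picture over the cusps).
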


    \begin{proof}
    The moduli stack $\Me[\Gamma_1(2)]$ of generalized elliptic curves with $[\Gamma_1(2)]$-level structure has an isomorphism $\Me[\Gamma_1(2)] \iso \Pc(2,4)$ over $\Spec(\Zb[1/ 2])$ as in \cite[\S 1.3]{Behrens} through the universal equation 
    \[
    Y^2Z = X^3 + a_2 X^2Z + a_4 XZ^2 \,.
    \]
    And the moduli stack $\Me[\Gamma_1(3)]$ of generalized elliptic curves with $[\Gamma_1(3)]$-level structure has an isomorphism $\Me[\Gamma_1(3)] \iso \Pc(1,3)$ over $\Spec(\Zb[1/ 3])$ as in \cite[Proposition 4.5]{HMe} through the universal equation 
    \[
    Y^2Z + a_1 XYZ + a_3 YZ^2 = X^3\,.
    \]
    And the moduli stack $\Me[\Gamma_1(4)]$ of generalized elliptic curves with $[\Gamma_1(4)]$-level structure has an isomorphism $\Me[\Gamma_1(4)] \iso \Pc(1,2)$ over $\Spec(\Zb[1/ 2])$ as in \cite[Examples 2.1]{Meier} through the universal equation 
    \[
    Y^2Z + a_1 XYZ + a_1a_2 YZ^2 = X^3 + a_2X^2Z\,.
    \]
    And the moduli stack $\Me[\Gamma(2)]$ of generalized elliptic curves with $[\Gamma(2)]$-level structure has an isomorphism $\Me[\Gamma(2)] \iso \Pc(2,2)$ over $\Spec(\Zb[1/ 2])$ as in \cite[Proposition 7.1]{Stojanoska} through the universal equation (where the degree of each $\lambda_i$ is 2)
    \[
    Y^2Z = X^3 + (\lambda_1 + \lambda_2)X^2Z + \lambda_1\lambda_2XZ^2\,.
    \]
    Finally, the fine moduli space $\Me[\Gamma(m)]$ of generalized elliptic curves with $[\Gamma(m)]$-level structure for $m=5,6,7,8,9,10 \mathrm{~or~} 12$ has an isomorphism $\Me[\Gamma(m)] \iso \Pb^1$ over $\Spec(\Zb[1/ m])$ as in \cite[Example 2.5]{Meier}.

    By Remark~\ref{rmk:wtprojtame}, the respective $\Me[\Gamma]$ as weighted projective stacks are tame Deligne--Mumford as well, and in fact, smooth.
    
    For the degree of the discriminant, it suffices to find the weight of the $\Gb_m$-action. First, the four papers cited above explicitly construct universal families of elliptic curves over the schematic covers $(\mathrm{Spec}\; K[a_i,a_j] - (0,0)) \rightarrow \Pc_K(i,j)$ of the corresponding moduli stacks. The explicit defining equation of the respective universal family implies that the $\lambda \in \Gb_m$ also acts on the discriminant of the universal family by multiplying $\lambda^{12}$. Therefore, the discriminant has degree 12.
    \end{proof}

    A generalized elliptic curve $X$ over $C_K$ can be thought of as a flat family of semistable elliptic curves admitting a group structure, such that a finite group scheme $\mathcal{G} \rightarrow C_K$ (determined by $\Gamma$) embeds into $X$ and its image meets every irreducible component of every geometric fibers of $X$. 

    \medskip

    We now consider the moduli stack $\Lc_{12n,g}^{[\Gamma]} := \Hom_n(C,\Me[\Gamma])$ of generalized elliptic fibrations over $C$ with $[\Gamma]$-level structures and $12n$ nodal singular fibers. 

    \begin{prop}\label{Lef_Moduli_Space_t}
        Let $n \in \mathbb{Z}_{+}$ and $K$ be a field with $\mathrm{char}(K) = 0$ or $\mathrm{char}(K) \nmid m$ where $m$ depends on $[\Gamma]$ as Proposition~\ref{prop:Moduli_t}. Then the moduli stack $\Lc_{12n,g}^{[\Gamma]}$ of generalized elliptic fibrations over the parameterized smooth projective basecurve $C_{K}$ of genus $g$ with discriminant degree $12n>0$ and $[\Gamma]$-level structures is the tame Deligne--Mumford stack $\Hom_{n}(C,\Me[\Gamma])$ parameterizing the $K$-morphisms $f:C \rightarrow \Me[\Gamma]$ such that $f^*\Oc_{\Me[\Gamma]}(1) \in \Pic^n C$ . 
    \end{prop}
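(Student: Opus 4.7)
The argument has three steps, all of which leverage the identification $\Me[\Gamma] \iso \Pc_K(a,b)$ established in Proposition~\ref{prop:Moduli_t}.

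First, I would set up the groupoid-level bijection. Under the characteristic hypothesis on $K$, the stack $\Me[\Gamma]$ is a \emph{fine} moduli stack of generalized elliptic curves with $[\Gamma]$-level structure, so $K$-morphisms $f\colon C\to\Me[\Gamma]$ correspond bijectively, via pullback of the universal family, to flat families $\pi\colon X\to C$ of generalized elliptic curves over $C$ equipped with $[\Gamma]$-level structure. This gives the underlying functorial identification between the Hom-groupoid and the moduli groupoid of such fibrations, along the same lines as \cite[\S 3]{HP} and \cite[\S 1]{JJ} for the unstructured case.

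Second, I would match the numerical invariant. By Proposition~\ref{prop:Moduli_t}, the universal discriminant on $\Me[\Gamma] \iso \Pc_K(a,b)$ is cut out by a $\Gb_m$-semi-invariant of weight $12$; equivalently it is a section of $\Oc_{\Me[\Gamma]}(12)$. Setting $L := f^{*}\Oc_{\Me[\Gamma]}(1) \in \Pic C$, the pulled-back discriminant lies in $H^{0}(C,L^{\otimes 12})$, and its zero scheme has degree $12\deg L$. Since the boundary of $\Me[\Gamma]$ parametrizes nodal degenerations of the underlying generalized elliptic curve, this zero scheme coincides scheme-theoretically with the singular locus of $\pi\colon X\to C$, and its points are nodal. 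Consequently, the degree-$12n$ discriminant condition on $X\to C$ is equivalent to $L\in\Pic^{n}C$, which proves the equality of open substacks.

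Third, I would verify the tame Deligne--Mumford property. By Remark~\ref{rmk:wtprojtame} and Proposition~\ref{prop:Moduli_t}, the target $\Me[\Gamma]$ is a smooth, separated, tame Deligne--Mumford stack of finite type (over the appropriate base); since $C$ is smooth and projective, \cite[Theorem 1.1]{Olsson} implies that $\Hom(C,\Me[\Gamma])$ is a Deligne--Mumford stack of finite type, and tameness is inherited because the automorphism groups of $K$-points of $\Hom(C,\Me[\Gamma])$ are subgroups of the automorphism groups of $K$-points of $\Me[\Gamma]$. The degree-$n$ locus is the open-and-closed substack obtained by fixing the image of $f^{*}\Oc_{\Me[\Gamma]}(1)$ in the $\pi_{0}$ of $\Pic C$.

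The main obstacle is in the second step: making precise that the pulled-back discriminant section cuts out the nodal fibers \emph{scheme-theoretically} and with the correct multiplicities, which in turn requires tracking the universal Weierstrass-type family through each of the five presentations in Proposition~\ref{prop:Moduli_t} and checking that the level structure is preserved under pullback. The explicit equations listed there, combined with Tate's algorithm applied fiberwise, make the weight-$12$ claim transparent but deserve a careful verification case by case.
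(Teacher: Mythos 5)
Your proof takes essentially the same route as the paper: identify $\Hom(C,\Me[\Gamma])$ with the moduli groupoid of $[\Gamma]$-structured generalized elliptic fibrations via pullback of the universal family, use the weight-$12$ discriminant from Proposition~\ref{prop:Moduli_t} to translate $\deg\Delta = 12n$ into $f^*\Oc_{\Me[\Gamma]}(1) \in \Pic^n C$, and deduce tame Deligne--Mumford from the corresponding property of the target. Your added justification for tameness (automorphism groups of points of the Hom-stack inject into those of the target) is a correct supplement to the paper's terser assertion, and your flagged caveat about the scheme-theoretic nature of the discriminant is a real gap in exposition that the paper also leaves implicit rather than resolves.
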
 

    \begin{proof}  
    Without the loss of generality, we prove the $\Hom_n(C,\Me[\Gamma_1(2)])$ case over a field $K$ with $\mathrm{char}(K) \neq 2$. The proof for the other cases are analogous. By the definition of the universal family $p$, any generalized elliptic curves $\pi: Y \rightarrow C$ with $[\Gamma_1(2)]$-structures comes from a morphism $f:C \rightarrow \Me[\Gamma_1(2)]$ and vice versa. As this correspondence also works in families, the moduli stack of generalized elliptic curves over $C$ with $[\Gamma_1(2)]$-structures is isomorphic to $\mathrm{Hom}(C,\Me[\Gamma_1(2)])$.
    
    Since the discriminant degree of $f$ is $12\deg f^*\Oc_{\Me[\Gamma_1(2)]}(1)$ by Proposition~\ref{prop:Moduli_t}, the substack $\mathrm{Hom}_n(C, \Me[\Gamma_1(2)])$ parametrizing such $f$'s with $\deg f^*\Oc_{\Me[\Gamma_1(2)]}(1)=n$ is the desired moduli stack. Since $\deg f^*\Oc_{\Me[\Gamma_1(2)]}(1)=n$ is an open condition, $\mathrm{Hom}_n(C, \Me[\Gamma_1(2)])$ is an open substack of $\mathrm{Hom}(C, \Me[\Gamma_1(2)])$, which is tame Deligne--Mumford as $\Me[\Gamma_1(2)]$ itself is tame Deligne--Mumford by Proposition~\ref{prop:Moduli_t}. Thus $\mathrm{Hom}_n(C, \Me[\Gamma_1(2)])$ satisfies the desired properties.
    \end{proof}

    We now acquire the exact number $|\Lc_{12n,0}^{[\Gamma]}(\Fb_q)/\sim|$ of $\Fb_q$--isomorphism classes of $\Fb_q$--points (i.e., the non--weighted point count) of the moduli stack $\Lc_{12n,0}^{[\Gamma]}$ .


    \begin{thm} \label{thm:non_weighted_point_torsion}
        If $\mathrm{char}(\Fb_q) \neq 2$, then \\ 
        \begingroup
        \allowdisplaybreaks
        \begin{align*}
            |\Lc_{12n,0}^{[\Gamma_1(2)]}(\Fb_q)/\sim|&= 2 \cdot \#_q\left(\Hom_n(\Pb^1,\Pc(2,4))\right) = 2 (q^{6n+1} - q^{6n-1})\\\\
            |\Lc_{12n,0}^{[\Gamma_1(4)]}(\Fb_q)/\sim|&= \#_q\left(\Hom_n(\Pb^1,\Pc(1,2))\right) = q^{3n+1} - q^{3n-1}\\\\
            |\Lc_{12n,0}^{[\Gamma(2)]}(\Fb_q)/\sim|&= 2 \cdot \#_q\left(\Hom_n(\Pb^1,\Pc(2,2))\right) = 2(q^{4n+1} - q^{4n-1})
        \end{align*}
        \endgroup
        If $\mathrm{char}(\Fb_q) \neq 3$, then
        \[|\Lc_{12n,0}^{[\Gamma_1(3)]}(\Fb_q)/\sim|= \#_q\left(\Hom_n(\Pb^1,\Pc(1,3))\right) = q^{4n+1} - q^{4n-1} \]
        If $\mathrm{char}(\Fb_q) \nmid m$, then
        \[|\Lc_{12n,0}^{[\Gamma_1(m)]}(\Fb_q)/\sim|= \#_q\left(\Hom_n(\Pb^1,\Pc(1,1) \iso \Pb^1)\right) = q^{2n+1} - q^{2n-1} \]
        $m$ is for $m=5,6,7,8,9,10 \mathrm{~or~} 12$.
    \end{thm}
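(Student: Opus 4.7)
The plan is to combine the presentation $\Lc_{12n,0}^{[\Gamma]} = \Hom_n(\Pb^1,\Me[\Gamma])$ from Proposition~\ref{Lef_Moduli_Space_t} with the isomorphisms $\Me[\Gamma] \cong \Pc(a,b)$ from Proposition~\ref{prop:Moduli_t}, apply the exact formula of Theorem~\ref{Exac_Count} to obtain the weighted point count $\#_q(\Hom_n(\Pb^1,\Pc(a,b)))$, and then convert from the weighted to the non-weighted count by a uniform stabilizer computation. In each of the five cases the weights are $(a,b)\in\{(2,4),(1,3),(1,2),(2,2),(1,1)\}$, respectively; taking $N=1$ and $|\vec\lambda|=a+b$ in Theorem~\ref{Exac_Count} produces
\[
\#_q\!\left(\Hom_n(\Pb^1,\Pc(a,b))\right) = (1+q)\big(q^{(a+b)n}-q^{(a+b)n-1}\big) = q^{(a+b)n+1}-q^{(a+b)n-1},
\]
which specializes to the five numerical expressions appearing in the theorem statement.

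To translate these weighted counts into the honest orbit counts $|\Lc_{12n,0}^{[\Gamma]}(\Fb_q)/\!\sim|$, I would exploit the presentation $\Hom_n(\Pb^1,\Pc(a,b)) = [T/\Gb_m]$, where $T$ parametrises pairs of coprime sections $(s_0,s_1)$ with $s_i \in H^0(\Pb^1,\Oc(\lambda_i n))$. Hilbert~90 forces every $\Gb_m$-torsor over $\Fb_q$ to be trivial, so the set of $\Fb_q$-isomorphism classes is $T(\Fb_q)/\Fb_q^{\times}$ and the automorphism group of any point is its stabilizer in $\Fb_q^{\times}$. For $n\geq 1$ neither $s_0$ nor $s_1$ can vanish identically (either would force the other to be a nowhere-vanishing section of a positive-degree line bundle on $\Pb^1$), so the stabilizer of every point of $T(\Fb_q)$ equals
\[
\{\zeta\in\Fb_q^{\times}: \zeta^a=\zeta^b=1\} = \mu_{\gcd(a,b)}(\Fb_q).
\]
Because this is the same group on every point, I obtain the clean identity $|\Lc_{12n,0}^{[\Gamma]}(\Fb_q)/\!\sim| = |\mu_{\gcd(a,b)}(\Fb_q)|\cdot \#_q\!\left(\Hom_n(\Pb^1,\Pc(a,b))\right)$.

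The cases then split by the value of $\gcd(a,b)$. When $\gcd(a,b)=1$ (the cases $[\Gamma_1(3)]$, $[\Gamma_1(4)]$, $[\Gamma_1(m)]$) the stabilizer is trivial and the weighted count equals the non-weighted count outright. When $\gcd(a,b)=2$ (the cases $[\Gamma_1(2)]$ and $[\Gamma(2)]$) the hypothesis $\mathrm{char}(\Fb_q)\neq 2$ forces $q$ to be odd, so $2\mid q-1$ and $|\mu_2(\Fb_q)|=2$, producing the advertised factor of $2$; geometrically this extra automorphism is the hyperelliptic involution $y\mapsto -y$ of the generalized elliptic curve, which preserves any $2$-torsion level datum. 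The main (rather modest) obstacle is the uniform-stabilizer assertion above: once it is verified, the rest is arithmetic bookkeeping comparing $|\mu_{\gcd(a,b)}(\Fb_q)|$ with $\gcd(a,b)$ under the standing characteristic hypotheses.
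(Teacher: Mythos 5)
Your proposal is correct and follows essentially the same route as the paper: identify $\Lc_{12n,0}^{[\Gamma]}$ with $\Hom_n(\Pb^1,\Pc(a,b))$ via Propositions~\ref{prop:Moduli_t} and~\ref{Lef_Moduli_Space_t}, invoke the exact weighted count $q^{(a+b)n+1}-q^{(a+b)n-1}$, and multiply by the uniform automorphism order $|\mu_{\gcd(a,b)}(\Fb_q)|=\gcd(a,b)$. You spell out two steps that the paper compresses—the Hilbert~90 argument reducing stacky $\Fb_q$-points to $T(\Fb_q)/\Fb_q^\times$, and the explicit verification (via non-vanishing of $s_0,s_1$ for $n\ge1$) that every point has stabilizer exactly $\mu_{\gcd(a,b)}(\Fb_q)$, where the paper instead cites surjectivity of $\varphi_g$ to invoke the generic stabilizer—but the argument is the same in substance.
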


    \begin{proof}\label{pf:gen_stab}
    Fix $n \in \mathbb{Z}_{\geq 1}$. Since any $\varphi_g \in \mathrm{Hom}_n(\Pb^1,\Pc(a,b))$ is surjective, the generic stabilizer group $\mu_{\gcd(a,b)}$ of $\Pc(a,b)$ is the automorphism group of $\varphi_g$. Using the identification from Proposition~\ref{Lef_Moduli_Space_t} and the weighted point counts (see Definition \ref{def:wtcount}) of $\Hom$ stacks as in Theorem \ref{P-cohomThm}, we have the desired formula for the $\Fb_q$--isomorphism classes of $\Fb_q$--points (i.e., the non--weighted point count) as  
        \[|\Lc_{12n,0}^{[\Gamma]}(\Fb_q)/\sim|=|\mu_{\gcd(a,b)}| \cdot (q^{(a+b)n+1}-q^{(a+b)n-1})\;\] 
        where the factor 2 comes from the hyperelliptic involution when $\mu_{\gcd(a,b)} = \mu_2$ .
    \end{proof}

    
    Again, we only consider the \textit{non-isotrivial} generalized elliptic fibrations. 

    The $\Delta$ is the discriminant of a generalized elliptic fibration and if $K=\Fb_q$, then $0<ht(\Delta):=q^{\deg \Delta} = q^{12n}$.
    Now, define $\Nc(\Fb_q(t),~[\Gamma],0 < q^{12n} \le B)$ 
     \[\coloneqq |\{\text{Generalized elliptic curves over } \Pb^{1}_{\Fb_q} \text{ with } [\Gamma] \text{--structures and } 0<ht(\Delta) \le B\}|\]
    Then, we acquire the following descriptions of $\Nc(\Fb_q(t),~[\Gamma],0 < q^{12n} \le B)$:

    \begin{cor}\label{cor:ell_curve_count_Gamma}
    The function $\Nc(\Fb_q(t),~[\Gamma],0 < B \le q^{12n})$, which counts the number of generalized elliptic curves with $[\Gamma]$-level structures over $\Pb^1_{\Fb_q}$ with $\mathrm{char}(\Fb_q) \nmid m$ where $m$ depends on $\Gamma$ as Proposition~\ref{prop:Moduli_t}; ordered by $0< ht(\Delta) = q^{12n} \le B$, satisfies:\\
        \begingroup
        \allowdisplaybreaks
        \begin{align*}
        \Nc(\Fb_q(t),~[\Gamma_1(2)],~0 < q^{12n} \le B) & = 2 \cdot \frac{(q^{7} - q^{5})}{(q^{6}-1)} \cdot \left( B^{\frac{1}{2}} - 1 \right)\\\\
        \Nc(\Fb_q(t),~[\Gamma_1(3)],~0 < q^{12n} \le B) & = \frac{(q^{5} - q^{3})}{(q^{4}-1)} \cdot \left( B^{\frac{1}{3}} - 1 \right)\\\\
        \Nc(\Fb_q(t),~[\Gamma_1(4)],~0 < q^{12n} \le B) & = \frac{(q^{4} - q^{2})}{(q^{3}-1)} \cdot \left( B^{\frac{1}{4}} - 1 \right)\\\\
        \Nc(\Fb_q(t),~[\Gamma(2)],~0 < q^{12n} \le B) & = 2 \cdot \frac{(q^{5} - q^{3})}{(q^{4}-1)} \cdot \left( B^{\frac{1}{3}} - 1 \right)\\\\
        \Nc(\Fb_q(t),~[\Gamma_1(m)],~0 < q^{12n} \le B) & = \frac{(q^{3} - q^{1})}{(q^{2}-1)} \cdot \left( B^{\frac{1}{6}} - 1 \right)
        \end{align*}
        \endgroup
        $m$ is for $m=5,6,7,8,9,10 \mathrm{~or~} 12$.
    \end{cor}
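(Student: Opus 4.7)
The plan is to reduce the statement to a direct geometric series computation built from the non-weighted point counts already established in Theorem \ref{thm:non_weighted_point_torsion}. The setup is that $\mathcal{N}(\Fb_q(t), [\Gamma], 0 < q^{12n} \le B)$ counts isomorphism classes of generalized elliptic curves over $\Pb^1_{\Fb_q}$ with $[\Gamma]$-structure whose discriminant height is at most $B$. Since the discriminant degree is always a multiple of $12$ (Proposition \ref{prop:Moduli_t}), and since $\Lc_{12n,0}^{[\Gamma]} = \Hom_n(\Pb^1, \Me[\Gamma])$ by Proposition \ref{Lef_Moduli_Space_t} parametrizes exactly those with $\deg(\Delta) = 12n$, we can write
\[
\mathcal{N}(\Fb_q(t), [\Gamma], 0 < q^{12n} \le B) \;=\; \sum_{n=1}^{\lfloor \log_q(B)/12 \rfloor} |\Lc_{12n,0}^{[\Gamma]}(\Fb_q)/\!\sim|\,.
\]

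The first step is to substitute the closed form of $|\Lc_{12n,0}^{[\Gamma]}(\Fb_q)/\!\sim|$ obtained in Theorem \ref{thm:non_weighted_point_torsion}. In each of the five cases, the count has the uniform shape $|\mu_{\gcd(a,b)}|\,(q^{(a+b)n+1} - q^{(a+b)n-1})$, where $(a,b)$ is the pair of weights such that $\Me[\Gamma] \cong \Pc(a,b)$; the factor $|\mu_{\gcd(a,b)}|$ records the generic stabilizer (equal to $2$ in the $[\Gamma_1(2)]$ and $[\Gamma(2)]$ cases, which contributes the hyperelliptic involution, and $1$ otherwise). Pulling the $n$-independent factor out of the sum leaves a pure geometric series in $q^{a+b}$.

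The second step is the summation. Setting $N = \lfloor \log_q(B)/12 \rfloor$ and $s = a+b$, one computes
\[
\sum_{n=1}^{N}\bigl(q^{sn+1} - q^{sn-1}\bigr) \;=\; (q^{s+1}-q^{s-1})\sum_{n=0}^{N-1} q^{sn} \;=\; \frac{q^{s+1}-q^{s-1}}{q^{s}-1}\bigl(q^{sN}-1\bigr).
\]
The key numerical observation is that $q^{sN} = B^{s/12}$ under the convention $q^{12N} = B$, so that $s=6$ yields $B^{1/2}$ (case $[\Gamma_1(2)]$), $s=4$ yields $B^{1/3}$ (cases $[\Gamma_1(3)]$, $[\Gamma(2)]$), $s=3$ yields $B^{1/4}$ (case $[\Gamma_1(4)]$), and $s=2$ yields $B^{1/6}$ (cases $[\Gamma_1(m)]$ for $m=5,\ldots,12$, excluding $11$). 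Plugging in these exponents together with the appropriate stabilizer factor reproduces the five formulas in the corollary verbatim.

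There is no substantial obstacle: the cohomological work has been done in Theorem \ref{P-cohomThm}, the identifications of moduli stacks are recorded in Propositions \ref{prop:Moduli_t} and \ref{Lef_Moduli_Space_t}, and the translation from weighted to unweighted point counts is precisely Theorem \ref{thm:non_weighted_point_torsion}. The only care needed is bookkeeping: making sure the weight pair $(a,b)$ and the stabilizer order $|\mu_{\gcd(a,b)}|$ are read off correctly from Proposition \ref{prop:Moduli_t} in each of the five cases, and confirming that the characteristic hypothesis on $\Fb_q$ (namely $\mathrm{char}(\Fb_q) \nmid m$) is exactly what is required for all the invoked results to apply simultaneously.
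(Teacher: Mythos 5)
Your proposal is correct and follows essentially the same route as the paper's own proof: express $\Nc(\Fb_q(t),[\Gamma],0<q^{12n}\le B)$ as the sum of the non-weighted counts $|\Lc_{12n,0}^{[\Gamma]}(\Fb_q)/\!\sim|$ from Theorem~\ref{thm:non_weighted_point_torsion} over $1\le n\le \lfloor \log_q(B)/12\rfloor$, then evaluate the resulting geometric series with ratio $q^{a+b}$. Your general-form computation (pulling out $q^{s+1}-q^{s-1}$ and summing $\sum_{m=0}^{N-1}q^{sm}$) reproduces each of the five displayed formulas, with the factor $|\mu_{\gcd(a,b)}|$ and exponent $s=a+b$ read off from Proposition~\ref{prop:Moduli_t} exactly as the paper does case by case.
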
 

    \begin{proof}
        Without the loss of the generality, we prove the $[\Gamma_1(2)]$--level structure case over $\mathrm{char}(\Fb_q) \neq 2$. The proof for the other cases are analogous. By Theorem~\ref{thm:non_weighted_point_torsion}, we know that the number of $\Fb_q$-isomorphism classes of generalized elliptic fibrations of discriminant degree $12n$ with $[\Gamma_1(2)]$-structures over $\Pb^1_{\Fb_q}$ is $|\Lc_{12n,0}^{[\Gamma_1(2)]}(\Fb_q)/\sim| = 2 \cdot (q^{6n + 1} - q^{6n - 1})$. Using this, we can explicitly compute the sharp enumeration on $\Nc(\Fb_q(t),~[\Gamma_1(2)],~0 < q^{12n} \le B)$ as follows
        \begingroup
        \allowdisplaybreaks
        \begin{align*}
        \Nc(\Fb_q(t),~[\Gamma_1(2)],~0 < q^{12n} \le B) & = \sum \limits_{n=1}^{\left \lfloor \frac{log_q B}{12} \right \rfloor} |\Lc_{12n,0}^{[\Gamma_1(2)]}(\Fb_q)/\sim| = 2 \cdot \frac{(q^{7} - q^{5})}{(q^{6}-1)} \cdot ( B^{\frac{1}{2}} - 1)
        \end{align*}
        \endgroup
    \end{proof}

    \par The main leading term of the sharp enumerations over $\Fb_q(t)$ matches the analogous asymptotic counts ordered by na\"ive height of underlying elliptic curves over $\Q$ by Harron and Snowden in \cite[Theorem 1.2]{HS} (see also \cite{Duke, Grant}). Results over higher genus $C_{\Fb_q}$ where we acquire the corresponding stable \'etale cohomology followed by stable point counts as in Corollary \ref{Ell_Count} is straightforward.

    \medskip

    \subsection*{Arithmetic moduli of $(m-1)$-stable genus one fibrations over \texorpdfstring{$C$}{C} with prescribed \texorpdfstring{$m$}{m}-marked points}\label{subsec:ptcount_m}

    \par We proceed to determine the sharp enumeration on the number of $m$-marked $(m-1)$-stable genus one fibrations over $\Pb^1_{\Fb_q}$ for $2 \le m \le 5$. First, we state the definition of $m$-marked $(m-1)$-stability from \cite[Definition 1.5.3]{LP}, which is a modification of the Deligne--Mumford stability \cite{DM}:

    \begin{defn}\label{def:m-pted_l-stable_curve}
        Let $K$ be a field and $m$ be a positive integer. Then, a tuple $(C,p_1,\dotsc,p_m)$, of a geometrically connected, geometrically reduced, and proper $K$-curve $C$ of arithmetic genus one with $m$ distinct $K$-rational points $p_i$ in the smooth locus of $C$, is a $(m-1)$-stable $m$-marked curve of arithmetic genus one if the curve $C_{\overline{K}} :=C \times_K \overline{K}$ and the divisor $\Sigma:=\{p_1,\dotsc,p_m\}$ satisfy the following properties, where $\overline{K}$ is the algebraic closure of $K$:
        \begin{enumerate}
            \item $C_{\overline{K}}$ has only nodes and elliptic $u$-fold points as singularities (see below), where $u < m$,
            \item $C_{\overline{K}}$ has no disconnecting nodes, and
            \item every irreducible component of $C_{\overline{K}}$ contains at least one marked point.
        \end{enumerate}
    \end{defn}
    
    \begin{rmk}\label{rmk:m-pted_l-stable_curve}
        A singular point of a curve over $\overline{K}$ is an elliptic $u$-fold singular point if it is Gorenstein and \'etale locally isomorphic to a union of $u$ general lines in $\Pb^{u-1}_{\overline{K}}$ passing through a common point.
    \end{rmk}
    
    \par Note that the name ``$(m-1)$-stability'' comes from \cite[\S 1.1]{Smyth}, which is defined when $\mathrm{char}(K) \neq 2,3$. By \cite[Proposition 1.5.4]{LP}, the above definition (by \cite[Definition 1.5.3]{LP}) coincides with that of Smyth when $\mathrm{char}(K) \neq 2,3$, hence we adapt Smyth's naming convention on Lekili and Polishchuk's definition. Regardless, we focus on the case when $\mathrm{char}(K) \neq 2,3$, so that the moduli stack of such curves behaves reasonably.
    
    \medskip
    
    \par By \cite[Theorem 3.8]{Smyth}, we have the moduli stack of $(m-1)$-stable $m$-marked curves of arithmetic genus one over any field of characteristic $\neq 2,3$:

    \begin{thm}\label{def:Smyth}
        There exists a proper irreducible Deligne--Mumford moduli stack $\Mg_{1,m}(m-1)$ of $(m-1)$-stable $m$-marked curves arithmetic genus one over $\Spec(\Zb[1/6])$
    \end{thm}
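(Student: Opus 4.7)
The plan is to establish three properties of the moduli problem $\Mg_{1,m}(m-1)$: that it is an algebraic Deligne--Mumford stack, that it is proper, and that it is irreducible, all over $\Spec(\Zb[1/6])$. As preparation, I would define the moduli pseudo-functor sending a base $T$ to the groupoid of flat proper families $\pi\colon \mathcal{C}\to T$ of arithmetic genus one curves together with $m$ disjoint sections $p_i$ landing in the relative smooth locus, such that every geometric fibre satisfies the three conditions of Definition \ref{def:m-pted_l-stable_curve}. The three fibrewise conditions (only nodes and elliptic $u$-fold singularities with $u<m$, no disconnecting nodes, and every irreducible component containing a marked point) are each locally closed or open on the base, so the pseudo-functor is cut out as a locally closed substack of the stack of $m$-pointed prestable genus one curves.

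To establish algebraicity, I would verify Artin's axioms: fppf descent is automatic for flat families of curves with sections, while the diagonal is representable, separated and of finite type by general nonsense on Isom-schemes between proper pointed curves. The smooth atlas is produced by unfolding the versal deformation of an $(m-1)$-stable curve; the key input is that elliptic $u$-fold singular points are Gorenstein of embedding dimension $u$ and, away from characteristics $2$ and $3$, have unobstructed and explicit deformations (this is precisely where the $\Spec(\Zb[1/6])$ hypothesis enters). The Deligne--Mumford property reduces to showing that for any such $(C,p_1,\dotsc,p_m)$, the group scheme $\Aut(C,p_1,\dotsc,p_m)$ is finite. Any infinitesimal automorphism is a global vector field on $C$ vanishing at each $p_i$ and preserving the singular points; since every component contains a marked point, such a field must vanish on every smooth rational component, and the Gorenstein nature of the elliptic $u$-fold singularities, together with the constraint $u<m$, rules out nontrivial vector fields on the remaining component. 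The excluded case $u=m$ is precisely the one in which the elliptic singularity would carry a nontrivial $\Gb_m$-action compatible with all $m$ branches.

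The main obstacle, and the heart of the proof, is the construction of a contraction morphism
\[
c\colon \Mg_{1,m}\longrightarrow \Mg_{1,m}(m-1)
\]
from the Deligne--Mumford--Knudsen stack of stable $m$-pointed genus one curves. Concretely, given a stable $m$-pointed genus one curve, one identifies the unique minimal connected subcurve $Z$ of arithmetic genus one together with all chains of rational components attached to $Z$ that contain no marked points, and contracts this entire configuration to a single elliptic $u$-fold point, where $u$ is the number of points at which the contracted locus meets the rest of $C$. The delicate step is to show that this contraction exists in families: one must verify a cohomological vanishing (for the dualizing sheaf of the contracted locus twisted by the restriction of a suitable relatively ample line bundle) which allows the line bundle to descend along the contraction, and this vanishing again requires $\mathrm{char}\ne 2,3$. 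Once the contraction is constructed, properness of $\Mg_{1,m}(m-1)$ follows from properness of $\Mg_{1,m}$ together with surjectivity of $c$ (equivalently, one applies the valuative criterion: any family over a punctured trace can be completed by first applying semistable reduction in $\Mg_{1,m}$ and then pushing down by $c$, with separatedness coming from the finiteness of automorphisms shown above). Irreducibility is immediate from the classical irreducibility of $\Mg_{1,m}$ and dominance of $c$.
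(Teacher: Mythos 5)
The paper does not prove this statement; it is quoted verbatim as \cite[Theorem~3.8]{Smyth}, so there is no in-house argument to compare against. Your proposal is therefore an attempt to reconstruct Smyth's original proof, and in broad strokes it tracks the actual strategy: define the moduli problem as a substack of prestable pointed genus one curves, verify Artin's axioms, show finiteness and unramifiedness of automorphisms using the bound $u<m$ and the deformation theory of elliptic $u$-fold points away from $\mathrm{char}=2,3$, and obtain properness by contracting stable $m$-pointed genus one curves. So the approach is right, but two details need repair.

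First, the description of the contraction is not quite correct as stated: you say to always contract the minimal genus one subcurve $Z$ together with unmarked rational tails. If $Z$ already carries a marked point (or meets the rest of the curve in too many branches), one does not contract it; the contraction is applied only when the stable curve fails $(m-1)$-stability, and the locus to be collapsed is the maximal ``unstable'' genus one subcurve meeting the rest in at most $m-1$ points and containing no markings. Phrased as a morphism $c\colon\Mg_{1,m}\to\Mg_{1,m}(m-1)$, $c$ is the identity on the open locus where the stable model is already $(m-1)$-stable. Second, separatedness is not a consequence of finiteness of automorphisms alone; in the valuative criterion you must show the $(m-1)$-stable limit is unique, which requires that the contraction of a semistable model over a DVR is canonical (independent of the choice of semistable reduction). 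This uniqueness is one of the more delicate points in Smyth's construction, and your sketch compresses it into a parenthetical. Finally, the claim that the three fibrewise conditions are ``locally closed or open on the base'' is plausible but should not be taken as automatic: the condition of having at worst elliptic $u$-fold singularities with $u<m$ is open because $u$-fold points degenerate only to worse singularities, but this requires an argument about the semicontinuity of the embedding dimension and $\delta$-invariant in the family. With these caveats filled in, your sketch matches Smyth's proof in substance.
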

    
    \par Note that when $m=1$, $\Mg_{1,1}(0) \cong \Mg_{1,1}$ is the Deligne--Mumford moduli stack of stable elliptic curves.
    
    \par In fact, the construction of $\Mg_{1,m}(m-1)$ extends to $\Spec~\Z$ by \cite[Theorem 1.5.7]{LP} (called $\Mg_{1,m}^{\infty}$ in loc.cit.) as an algebraic stack, which is proper over $\Spec~\Z[1/N]$ where $N$ depends on $m$:
    \begin{itemize}
        \item if $m \ge 3$, then $N=1$,
        \item if $m=2$, then $N=2$, and
        \item if $m=1$, then $N=6$.
    \end{itemize}
    However, even with those assumptions above, $\Mg_{1,m}(m-1)$ is not necessarily Deligne--Mumford. Nevertheless, by \cite[Theorem 1.5.7.]{LP}, we obtain the explicit descriptions of $\Mg_{1,m}(m-1)$:

    \begin{prop} \label{prop:Moduli_s}
    The moduli stack $\Mg_{1,m}(m-1)$ of $m$-marked $(m-1)$-stable curves of arithmetic genus one for $2 \le m \le 5$ is isomorphic to the following over a field $K$:
        \begin{enumerate}

        \item if $\mathrm{char}(K) \neq 2,3$, the tame Deligne--Mumford moduli stack of 2-marked 1-stable curves of arithmetic genus one is isomorphic to $$(\Mg_{1,2}(1))_K \cong [ (\Spec~K[a_2,a_3,a_4] - 0) / \Gb_m ] = \Pc_K(2,3,4),$$  

        \item if $\mathrm{char}(K) \neq 2,3$, the tame Deligne--Mumford moduli stack of 3-marked 2-stable curves of arithmetic genus one is isomorphic to $$(\Mg_{1,3}(2))_K \cong [ (\Spec~K[a_1,a_2,a_2,a_3] - 0) / \Gb_m ] = \Pc_K(1,2,2,3),$$  

        \item if $\mathrm{char}(K) \neq 2$, the tame Deligne--Mumford moduli stack of 4-marked 3-stable curves of arithmetic genus one is isomorphic to $$(\Mg_{1,4}(3))_K \cong [ (\Spec~K[a_1,a_1,a_1,a_2,a_2] - 0) / \Gb_m ] = \Pc_K(1,1,1,2,2),$$  

        \item over any $K$, the fine moduli space of 5-marked 4-stable curves of arithmetic genus one is isomorphic to a scheme $$(\Mg_{1,5}(4))_K \cong [ (\Spec~K[a_1,a_1,a_1,a_1,a_1,a_1] - 0) / \Gb_m ] = \Pb_K(1,1,1,1,1,1) \iso \Pb_K^5, $$  
        \end{enumerate}   
        where $\lambda \cdot a_i=\lambda^i a_i$ for $\lambda \in \Gb_m$ and $i = 1,2,3,4$. Thus, the $a_i$'s have degree $i$ respectively. Furthermore, if $\mathrm{char}(K) \neq 2,3$, then the discriminant divisors of such $\Mg_{1,m}(m-1)$ have degree 12.
    \end{prop}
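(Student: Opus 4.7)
The plan is to mirror the strategy of Proposition~\ref{prop:Moduli_t}, using the explicit constructions of the moduli stacks $\Mg_{1,m}(m-1)$ from \cite[Theorem 1.5.7]{LP} (where they are denoted $\Mg_{1,m}^{\infty}$). That result already realizes each $\Mg_{1,m}(m-1)$ as a quotient stack $[U/\Gb_m]$ of the complement of the origin in an affine space whose coordinates are the coefficients of a generalized Weierstrass-type equation defining the universal $m$-marked $(m-1)$-stable curve of arithmetic genus one. By Definition~\ref{def:wtproj}, once one reads off the weights of the $\Gb_m$-action on these coordinates, the identification with $\Pc(\lambdavec)$ is immediate.

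The key step will therefore be to extract these weights for each $m\in\{2,3,4,5\}$ from the normalizations chosen in \cite{LP}. For $m=2$, I expect the universal family to be cut out by a short Weierstrass equation decorated with three parameters on which $\Gb_m$ acts with weights $(2,3,4)$, reproducing $\Pc(2,3,4)$; for $m=3,4$ the analogous universal equations yield the weight vectors $(1,2,2,3)$ and $(1,1,1,2,2)$, respectively. For $m=5$ the universal family is determined by six sections of equal weight $1$, so that the stabilizer at every point of $U$ is trivial and the quotient is an honest projective space $\Pb^5$, as stated.

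After fixing the presentations, the tame Deligne--Mumford claim in items (1)--(3) follows from Remark~\ref{rmk:wtprojtame}: each of the weight vectors above has entries coprime to $\mathrm{char}(K)$ precisely under the hypothesis in the corresponding item. The final assertion on the discriminant degree would then follow the same template as the last paragraph of the proof of Proposition~\ref{prop:Moduli_t}: one verifies directly from the universal equations that the discriminant polynomial is weighted-homogeneous of total weight $12$, matching the classical computation for $\Me\cong\Pc(4,6)$.

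The main obstacle is bookkeeping rather than any deep new input. The conventions in \cite{LP} vary with $m$ and do not always present their quotient stacks in the normal form of Definition~\ref{def:wtproj}; in particular, the choice of affine chart of the universal family and the way each marked point is encoded as a section of a specific line bundle must be reconciled with the Weierstrass-style normalization used throughout our paper before the weights can be read off unambiguously. Once these dictionaries are set up by a careful reading of \cite[\S 1.5]{LP}, the identifications with $\Pc(2,3,4)$, $\Pc(1,2,2,3)$, $\Pc(1,1,1,2,2)$, and $\Pb^5$, together with the discriminant degree computation, will all be routine.
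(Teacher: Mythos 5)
Your plan for the identifications $\Mg_{1,m}(m-1)\cong\Pcv$ and for the tame Deligne--Mumford property matches the paper exactly: both rely on reading off the $\Gb_m$-weights from the explicit presentations in \cite[Theorem 1.5.7]{LP} and then invoke Remark~\ref{rmk:wtprojtame}. The paper even writes out the universal equation and its discriminant for $m=2$, which is precisely what you propose.

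Where you diverge is the final assertion on the discriminant degree. You propose to ``verify directly from the universal equations that the discriminant polynomial is weighted-homogeneous of total weight $12$,'' following the template of Proposition~\ref{prop:Moduli_t}. The paper deliberately does \emph{not} do this for Proposition~\ref{prop:Moduli_s}; instead it observes that under the LP identification $\Oc_{\Pcv}(1)\cong\lambda=\pi_*\omega_\pi$, and then cites \cite[\S 3.1, Remark 3.3]{Smyth2} to conclude $\Delta_{\mathrm{irr}}\sim 12\lambda$. This is not a cosmetic difference: for $m=2,3$ the universal curve is a plane cubic, so ``the discriminant polynomial'' is a single honest weighted-homogeneous form and your direct check is feasible (the paper in fact records it for $m=2$). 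But for $m=4$ the universal curve is an intersection of two quadrics in $\Pb^3$, and for $m=5$ it sits in $\Pb^4$ cut out by a net of quadrics (Pfaffian presentation), so there is no single ``discriminant polynomial'' to inspect; the discriminant locus is the vanishing of a resultant/Chow form of a system of equations, and extracting its weighted degree ``by inspection'' is genuinely nontrivial. Smyth's relation $\Delta_{\mathrm{irr}}\sim 12\lambda$ handles all $m$ uniformly and avoids this. Your plan is not wrong in spirit, but the phrase ``routine bookkeeping'' understates the work required for $m\ge 4$; if you want to stick with a direct computation there, you would need to actually produce and analyze the relevant resultants, or else fall back on the intersection-theoretic argument the paper uses.
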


    \begin{proof}
    The moduli stack $\Mg_{1,2}(1)$ of 2-marked points at $\infty$ and $(0,0)$ Smyth's 1-stable curves of arithmetic genus one has an isomorphism $\Mg_{1,2}(1) \iso \Pc(2,3,4)$ over $\Spec(\Zb[1/ 6])$ as in \cite[Theorem 1.5.7.]{LP} through the universal equation 
    \[
    Y^2Z + a_3 YZ^2 = X^3 + a_2 X^2Z + a_4 XZ^2 \,,
    \]
    with discriminant $\Delta = -16a_2^3a_3^2 + 16a_2^2a_4^2 - 64a_4^3 - 27a_3^4 + 56a_2a_4a_3^2$. Similarly, the Proof of \cite[Theorem 1.5.7.]{LP} gives the corresponding isomorphisms $\Mg_{1,m}(m-1) \iso \Pcv$. 

    By Remark~\ref{rmk:wtprojtame}, the respective $\Me[\Gamma]$ as weighted projective stacks are tame Deligne--Mumford as well, and in fact, smooth.
    
    For the degree of the discriminant when $\mathrm{char}(K) \neq 2,3$, it suffices to describe the discriminant divisor, the locus of singular curves in $\Mg_{1,m}(m-1)$. First, \cite[Theorem 1.5.7.]{LP} shows that in the above case, where $\Mg_{1,m}(m-1) \cong \Pcv$, the line bundle $\Oc_{\Pcv}$(1) of degree one is isomorphic to $\lambda:=\pi_*\omega_{\pi}$, where $\pi: \overline{\Cc}_{1,m}(m-1) \rightarrow \Mg_{1,m}(m-1)$ is the universal family of $(m-1)$-stable $m$-marked curves of arithmetic genus one. Since $\Mg_{1,m}(m-1)$ is smooth and the Picard rank is one (generated by $\lambda$), the discriminant divisor is Cartier. In fact, by \cite[\S 3.1]{Smyth2}, it coincides with the locus $\Delta_{irr}$ of curves with non-disconnecting nodes or non-nodal singular points. Then \cite[Remark 3.3]{Smyth2} (which assumes $\mathrm{char}(K) \neq 2,3$) implies that $\Delta_{irr} \sim 12\lambda$, thus the discriminant divisor has degree 12.
    \end{proof}

    We now consider the moduli stack $\Lc^{m}_{12n,0} \coloneqq \mathrm{Hom}_n(C, \Mg_{1,m}(m-1))$ of $m$-marked $(m-1)$-stable genus one fibrations over $C$ with $12n$ nodal singular fibers. 

    \begin{prop}\label{Lef_Moduli_Space2}
    Let $n \in \mathbb{Z}_{+}$ and $K$ be a field with $\mathrm{char}(K) = 0$ or $\mathrm{char}(K) > 3$. Then the moduli stack $\Lc^{m}_{12n,0}$ of $m$-marked $(m-1)$-stable genus one fibrations over the parameterized smooth projective basecurve $C_{K}$ of genus $g$ with discriminant degree $12n>0$ is the tame Deligne--Mumford stack $\mathrm{Hom}_n(C, \Mg_{1,m}(m-1))$ parameterizing the $K$-morphisms $f:C \rightarrow \Mg_{1,m}(m-1)$ such that $f^*\Oc_{\Mg_{1,m}(m-1)}(1) \in \Pic^n C$ . 
    \end{prop}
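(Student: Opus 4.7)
The plan is to mirror the argument already used for Proposition~\ref{Lef_Moduli_Space_t}, replacing the modular interpretation of $\Me[\Gamma]$ by that of $\Mg_{1,m}(m-1)$ as supplied by Proposition~\ref{prop:Moduli_s}. The starting point is the universal family $\pi : \overline{\mathcal{C}}_{1,m}(m-1) \to \Mg_{1,m}(m-1)$ of $m$-marked $(m-1)$-stable curves of arithmetic genus one, which exists by Theorem~\ref{def:Smyth} and extends the universal structure appearing in the proof of Proposition~\ref{prop:Moduli_s}. By the moduli-theoretic definition of $\Mg_{1,m}(m-1)$, pullback along a $K$-morphism $f : C \to \Mg_{1,m}(m-1)$ produces an $m$-marked $(m-1)$-stable genus one fibration $f^*\pi : f^*\overline{\mathcal{C}}_{1,m}(m-1) \to C$, and this correspondence is natural in the base so it assembles into an isomorphism of stacks between $\mathrm{Hom}(C,\Mg_{1,m}(m-1))$ and the stack of all $m$-marked $(m-1)$-stable genus one fibrations over $C$.

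Next I would impose the degree condition. Under the hypothesis $\mathrm{char}(K)=0$ or $\mathrm{char}(K)>3$, Proposition~\ref{prop:Moduli_s} gives an isomorphism $\Mg_{1,m}(m-1)\cong \Pcv$ for the appropriate weight vector, and moreover identifies the discriminant divisor with a Cartier divisor of degree $12$ in the class of $\Oc_{\Mg_{1,m}(m-1)}(1)$. Consequently, the discriminant degree of the fibration $f^*\pi$ over $C$ equals $12 \cdot \deg f^*\Oc_{\Mg_{1,m}(m-1)}(1)$. Restricting to the open substack where $\deg f^*\Oc_{\Mg_{1,m}(m-1)}(1)=n$ therefore cuts out precisely those fibrations whose discriminant has degree $12n$, and this is the sub-Hom-stack $\mathrm{Hom}_n(C,\Mg_{1,m}(m-1))$ appearing in the statement.

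To conclude the tame Deligne--Mumford property, I would invoke Olsson's representability theorem \cite[Theorem 1.1]{Olsson}: since $\Mg_{1,m}(m-1)\cong \Pcv$ is a tame Deligne--Mumford stack (Remark~\ref{rmk:wtprojtame}, once $\mathrm{char}(K)>3$ guarantees tameness for all the weights occurring in Proposition~\ref{prop:Moduli_s}), the Hom-stack $\mathrm{Hom}(C,\Mg_{1,m}(m-1))$ is a Deligne--Mumford stack, and openness of the degree condition shows the substack $\mathrm{Hom}_n(C,\Mg_{1,m}(m-1))$ inherits the same property. Tameness is then a consequence of tameness of the target together with the fact that automorphisms of $f:C\to \Mg_{1,m}(m-1)$ inject into automorphisms of the generic point of its image.

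The main obstacle, such as it is, is ensuring that the degree interpretation $12n$ of the discriminant really is the correct numerical invariant. This requires the explicit identification of the discriminant class with $12 \lambda$ on $\Mg_{1,m}(m-1)$, which for $\mathrm{char}(K)>3$ is handled in the proof of Proposition~\ref{prop:Moduli_s} via \cite[\S 3.1, Remark 3.3]{Smyth2}; once this comparison is in hand, the rest of the argument is a direct transcription of the $[\Gamma]$-level case and requires no new ideas.
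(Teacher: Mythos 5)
Your proposal is correct and takes essentially the same approach as the paper: both establish the correspondence between $m$-marked $(m-1)$-stable genus one fibrations over $C$ and morphisms $f:C\to\Mg_{1,m}(m-1)$ via the universal family, both use Proposition~\ref{prop:Moduli_s} to identify the discriminant degree with $12\deg f^*\Oc_{\Mg_{1,m}(m-1)}(1)$, and both observe that the degree condition is open. The only difference is cosmetic: the paper proves the $m=2$ case and declares the rest analogous, citing \cite[Proposition 3.6]{HP2} for tame Deligne--Mumfordness of the Hom-stack, whereas you invoke \cite[Theorem 1.1]{Olsson} directly for the Deligne--Mumford property and supply the standard injection of $\mathrm{Aut}(f)$ into the stabilizer of a generic point for tameness; both routes are valid and the paper itself appeals to Olsson's theorem elsewhere for the same conclusion.
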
 

    \begin{proof}
    Without the loss of the generality, we prove the $2$-marked $1$-stable case over a field $K$ with $\mathrm{char}(K) \neq 2,3$. The proof for the other cases are analogous. By the definition of the universal family $p$, any $2$-marked $1$-stable arithmetic genus one curves $\pi: Y \rightarrow C$ with discriminant degree $12n$ comes from a morphism $f:C \rightarrow \Mg_{1,2}(1)$ and vice versa. As this correspondence also works in families, the moduli stack of $2$-marked $1$-stable curves of arithmetic genus one over $C_K$ is isomorphic to $\mathrm{Hom}(C,\Mg_{1,2}(1))$. 

    Since the discriminant degree of $f$ is $12\deg f^*\Oc_{\Mg_{1,2}(1)}(1)$ by Proposition~\ref{prop:Moduli_s}, the substack $\mathrm{Hom}_n(C,\Mg_{1,2}(1))$ parametrizing such $f$'s with $\deg f^*\Oc_{\Mg_{1,2}(1)}(1)=n$ is the desired moduli stack. Since $\deg f^*\Oc_{\Mg_{1,2}(1)}(1)=n$ is an open condition, $\mathrm{Hom}_n(C, \Mg_{1,2}(1))$ is an open substack of $\mathrm{Hom}(C, \Mg_{1,2}(1))$, which is tame Deligne--Mumford by \cite[Proposition 3.6]{HP2} as $\Mg_{1,2}(1)$ itself is tame Deligne--Mumford by Proposition~\ref{prop:Moduli_s}. Thus $\mathrm{Hom}_n(C, \Mg_{1,2}(1))$ satisfies the desired properties.
    \end{proof}

    We now acquire the exact number $|\Lc^{m}_{12n,0}(\Fb_q)/\sim|$ of $\Fb_q$--isomorphism classes of $\Fb_q$--points (i.e., the non--weighted point count) of the moduli stack $\Lc^{m}_{12n,0}$ .

    \begin{thm} \label{thm:non_weighted_point_m_sections}
        If $\mathrm{char}(\Fb_q) \neq 2,3$, then \\ 
        \begingroup
        \allowdisplaybreaks
        \begin{align*}
            |\Lc^{m=2}_{12n,0}(\Fb_q)/\sim| &= \#_q\left(\Hom_n(\Pb^1,\Pc(2,3,4))\right) + \#_q\left(\Hom_n(\Pb^1,\Pc(2,4))\right)\\
            \phantom{=}\:& = (q^{9n + 2} + q^{9n + 1} - q^{9n - 1} - q^{9n - 2}) + (q^{6n + 1} - q^{6n - 1})\\\\
            |\Lc^{m=3}_{12n,0}(\Fb_q)/\sim| &= \#_q\left(\Hom_n(\Pb^1,\Pc(1,2,2,3))\right) + \#_q\left(\Hom_n(\Pb^1,\Pc(2,2))\right)\\
            \phantom{=}\:& = (q^{8n+3}+q^{8n+2}+q^{8n+1}-q^{8n-1}-q^{8n-2}-q^{8n-3}) + (q^{4n + 1} - q^{4n - 1})\\\\
            |\Lc^{m=4}_{12n,0}(\Fb_q)/\sim| &= \#_q\left(\Hom_n(\Pb^1,\Pc(1,1,1,2,2))\right) + \#_q\left(\Hom_n(\Pb^1,\Pc(2,2))\right)\\
            \phantom{=}\:& = (q^{7n+4}+q^{7n+3}+q^{7n+2}+q^{7n+1}-q^{7n-1}-q^{7n-2}-q^{7n-3}-q^{7n-4}) \\
            &\phantom{= (} + (q^{4n + 1} - q^{4n - 1})\\
            |\Lc^{m=5}_{12n,0}(\Fb_q)/\sim| &= \#_q\left(\Hom_n(\Pb^1,\Pb(1,1,1,1,1,1) \iso \Pb^5)\right) \\
            \phantom{=}\:& = q^{6n+5}+q^{6n+4}+q^{6n+3}+q^{6n+2}+q^{6n+1} -q^{6n-1}-q^{6n-2}-q^{6n-3}\\&\,\,\,\,\,\,\,\,\,\, -q^{6n-4}-q^{6n-5}
        \end{align*}
        \endgroup
    \end{thm}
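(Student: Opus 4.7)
The plan is to reduce the non-weighted count to a combination of weighted counts by stratifying $\Hom_n(\Pb^1, \Pcv)$ according to the automorphism type of its $\Fb_q$-points, and then apply the exact weighted count formula of Theorem~\ref{Exac_Count}.

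By Proposition~\ref{Lef_Moduli_Space2} together with Proposition~\ref{prop:Moduli_s}, we identify $\Lc^m_{12n,0}$ with $\Hom_n(\Pb^1, \Pcv)$ for the weight vectors $(2,3,4)$, $(1,2,2,3)$, $(1,1,1,2,2)$, and $(1,1,1,1,1,1)$ when $m=2, 3, 4, 5$ respectively. For a point $(L, [s_0:\ldots:s_N]) \in \Hom_n(\Pb^1, \Pcv)$, the automorphism group is $\mu_d$ with $d = \gcd\{\lambda_i : s_i \not\equiv 0\}$, read off directly from the $\Gb_m$-action with weights $\vec\lambda$. Partitioning $\Hom_n(\Pb^1, \Pcv)(\Fb_q)/\sim$ into the subsets $S_d$ of points with $|\Aut|=d$, we obtain
\[
|\Hom_n(\Pb^1, \Pcv)(\Fb_q)/\sim| = \sum_d |S_d|, \qquad \#_q\big(\Hom_n(\Pb^1, \Pcv)\big) = \sum_d \tfrac{1}{d} |S_d|.
\]

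The key step is to enumerate, for each of the four weight vectors, the strata with $d>1$. A general remark cuts the case-check short: for $n\geq 1$ no single section $s_i\in H^0(\Pb^1,L^{\otimes \lambda_i})$ can be everywhere nonvanishing, since $\deg L^{\otimes \lambda_i}=\lambda_i n>0$; in particular, any vanishing pattern in which all but one $s_i$ is identically zero is empty. Running through the remaining patterns, a direct $\gcd$ computation shows that the only subloci with nontrivial stabilizer are the following, each with stabilizer exactly $\mu_2$: for $\vec\lambda=(2,3,4)$ the locus $\{s_1\equiv 0\}\cong \Hom_n(\Pb^1,\Pc(2,4))$; for $\vec\lambda=(1,2,2,3)$ the locus $\{s_0\equiv s_3\equiv 0\}\cong \Hom_n(\Pb^1,\Pc(2,2))$; for $\vec\lambda=(1,1,1,2,2)$ the locus $\{s_0\equiv s_1\equiv s_2\equiv 0\}\cong \Hom_n(\Pb^1,\Pc(2,2))$; and for $\vec\lambda=(1,1,1,1,1,1)$ no such locus exists. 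Let $\Zc$ denote this substack (vacuous for $m=5$).

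Since every $\Fb_q$-point of $\Zc$ has stabilizer exactly $\mu_2$, we have $|\Zc(\Fb_q)/\sim|=2\cdot\#_q(\Zc)$, giving $|S_2|=2\cdot \#_q(\Zc)$ and hence
\[
|\Hom_n(\Pb^1,\Pcv)(\Fb_q)/\sim| \;=\; |S_1|+|S_2| \;=\; \#_q\big(\Hom_n(\Pb^1,\Pcv)\big)+\#_q(\Zc).
\]
Substituting the explicit polynomials from Theorem~\ref{Exac_Count} for each summand yields the formulas stated in the theorem. The principal obstacle is the combinatorial bookkeeping in the middle step: one must verify that no vanishing pattern produces a $\mu_d$-stabilizer with $d\geq 3$, which here reduces to checking that any subset of $\vec\lambda$ with $\gcd\geq 3$ can only be realized when too many sections are forced to vanish identically, ruled out by the $n\geq 1$ degree argument.
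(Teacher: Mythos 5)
Your proposal is correct and follows essentially the same route as the paper: identify the substack of points with nontrivial (order~$2$) stabilizer, and convert between the weighted count $\#_q$ and the non-weighted count by adding the weighted count of that substack. Where the paper delegates this bookkeeping to the cited Proposition~4.10 of \cite{HP2}, you reconstruct it directly via the stratification $|S_1|+|S_2|$ with $|S_2|=2\#_q(\Zc)$, and you verify by a $\gcd$ analysis (using that no single section of a line bundle of positive degree on $\Pb^1$ is everywhere nonvanishing) that $\mu_2$ is the only nontrivial stabilizer that occurs; this is exactly the content the paper is relying on.
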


    \begin{proof} 
        Note that $\Mg_{1,2}(1) \cong \Pc(2,3,4)$ has the substack $\Pc(2,4)$ with the generic stabilizer of order 2. Using the identification from Proposition~\ref{Lef_Moduli_Space2} and the weighted point counts (see Definition \ref{def:wtcount}) of $\Hom$ stacks as in Theorem \ref{P-cohomThm}, we have the number of isomorphism classes of $\Fb_q$-points of $\Lc^{m=2}_{12n,0}$ with discriminant degree $12n$ is $|\Lc^{m=2}_{12n,0}(\Fb_q)/\sim| = (q^{9n + 2} + q^{9n + 1} - q^{9n - 1} - q^{9n - 2}) + (q^{6n + 1} - q^{6n - 1})$ by summing the weighted point counts of $\Hom$ stacks as in \cite[Proposition 4.10]{HP2}. Similarly, $\Mg_{1,3}(2) \cong \Pc(1,2,2,3)$ and $\Mg_{1,4}(3) \cong \Pc(1,1,1,2,2)$ has the substack $\Pc(2,2)$ with the generic stabilizer of order 2. This implies that adding $(q^{4n + 1} - q^{4n - 1})$ to the corresponding weighted points count gives the desired non--weighted point counts.  Finally, $\Mg_{1,5}(4) \cong \Pb^5$, so that the non-weighted point count coincides with the weighted point count.
    \end{proof}

    \medskip

    Again, we only consider the \textit{non-isotrivial} $m$-marked $(m-1)$-stable genus one curves. The $\Delta$ is the discriminant of a $m$-marked $(m-1)$-stable genus one and if $K=\Fb_q$, then $0<ht(\Delta):=q^{\deg \Delta} = q^{12n}$.
    Now, define $\Nc(\Fb_q(t),~m,~0 < B \le q^{12n})$ 
     \[\coloneqq |\{m\text{-marked } (m-1) \text{-stable genus one fibrations over } \Pb^{1}_{\Fb_q} \text{ with } 0<ht(\Delta) \le B\}|\]
     Note that when $m=1$, $\Nc(\Fb_q(t),~m=1,~0 < B \le q^{12n})$ counts the stable elliptic fibrations as in \cite[Theorem 3]{HP}. When $2 \le m \le 5$, we acquire the following sharp enumerations of $\Nc(\Fb_q(t),~m,~0 < B \le q^{12n})$ abbreviated as $\Nc(\Fb_q(t),~m,~B)$:

    \begin{cor}\label{cor:ell_curve_m_count}
    The function $\Nc(\Fb_q(t),~m,~0 < B \le q^{12n})$, which counts the number of $m$-marked $(m-1)$-stable genus one fibration over $\Pb^1_{\Fb_q}$ with $\text{char} (\Fb_q) \neq 2,3$ ordered by $0< ht(\Delta) = q^{12n} \le B$, satisfies:

        \begingroup
        \allowdisplaybreaks
        \begin{align*}
        \Nc(\Fb_q(t),~m=2,~B) & = \frac{({q^{11} + q^{10} - q^{8} - q^{7}})}{(q^{9}-1)} \cdot ( B^{\frac{3}{4}} - 1) +  \frac{(q^{7} - q^{5})}{(q^{6}-1)} \cdot ( B^{\frac{1}{2}} - 1) \\\\
        \Nc(\Fb_q(t),~m=3,~B) & = \frac{({q^{11} + q^{10} + q^{9} - q^{7} - q^{6} - q^{5}})}{(q^{8}-1)} \cdot ( B^{\frac{2}{3}} - 1) + \frac{(q^{5} - q^{3})}{(q^{4}-1)} \cdot ( B^{\frac{1}{3}} - 1) \\\\
        \Nc(\Fb_q(t),~m=4,~B) & =\frac{({q^{11} + q^{10} + q^{9} + q^{8} - q^{6} - q^{5} - q^{4} - q^{3}})}{(q^{7}-1)} \cdot ( B^{\frac{7}{12}} - 1)  \\ &\,\,\,\,\,\, +\frac{(q^{5} - q^{3})}{(q^{4}-1)} \cdot ( B^{\frac{1}{3}} - 1) \\\\
        \Nc(\Fb_q(t),~m=5,~B) & = \frac{({q^{11} + q^{10} + q^{9} + q^{8} + q^{7} -q^{5}- q^{4} - q^{3} - q^{2} - q^{1}})}{(q^{6}-1)} \cdot ( B^{\frac{1}{2}} - 1)
        \end{align*}
        \endgroup
    \end{cor}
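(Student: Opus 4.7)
The plan is to mimic directly the argument used for Corollary \ref{cor:ell_curve_count_Gamma}, substituting in the point counts of Theorem \ref{thm:non_weighted_point_m_sections} in place of those of Theorem \ref{thm:non_weighted_point_torsion}. First I would appeal to Proposition \ref{Lef_Moduli_Space2} to identify, for each $2 \le m \le 5$, the moduli stack $\Lc^{m}_{12n,0}$ with the Hom-stack $\Hom_{n}(\Pb^{1},\Mg_{1,m}(m-1))$. Then, using the explicit isomorphisms of Proposition \ref{prop:Moduli_s} between $\Mg_{1,m}(m-1)$ and the weighted projective stacks $\Pcv$ (together with the substacks accounting for extra generic stabilizers in the cases $m=2,3,4$), Theorem \ref{thm:non_weighted_point_m_sections} provides the non--weighted count $|\Lc^{m}_{12n,0}(\Fb_q)/\sim|$ as an explicit polynomial in $q$ and $q^n$.

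Next, I would write
\[
\Nc(\Fb_q(t),~m,~0 < q^{12n} \le B) \;=\; \sum_{n=1}^{\lfloor \log_q B/12 \rfloor} |\Lc^{m}_{12n,0}(\Fb_q)/\sim|,
\]
and evaluate this sum term by term using the geometric series formula. For instance, in the $m=2$ case we split the sum into the contributions from $\Pc(2,3,4)$ and from the stratum $\Pc(2,4)$, obtaining
\[
(q^{2}+q-q^{-1}-q^{-2})\sum_{n=1}^{N} q^{9n} \;+\; (q-q^{-1})\sum_{n=1}^{N} q^{6n},
\]
where $N := \lfloor \log_q B/12 \rfloor$. Each geometric progression sums to $q^{k}(q^{kN}-1)/(q^{k}-1)$ for $k = 9,6$ respectively, and substituting $q^{9N} = (q^{12N})^{3/4}$, $q^{6N} = (q^{12N})^{1/2}$ yields the desired closed form with $B^{3/4}$ and $B^{1/2}$ as the leading powers. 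The cases $m=3,4,5$ are entirely analogous, with the relevant exponents dictated by the dimensions $|\vec\lambda|$ of the corresponding weighted projective stacks $\Pc(1,2,2,3)$, $\Pc(1,1,1,2,2)$, $\Pb^{5}$ and by the dimensions of the extra substacks contributing for $m=3,4$.

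The argument is not really a computation in disguise of a deep obstacle; the only subtlety worth flagging is the sharpness of the enumeration. As in the proof of Corollary \ref{cor:ell_curve_count_Gamma}, the equality in the displayed formula is attained precisely when $B = q^{12n}$ for some $n \in \mathbb{Z}_{\ge 1}$, so the formula is sharp at infinitely many values of $B$, and otherwise one simply replaces $B$ by $q^{12\lfloor \log_q B/12 \rfloor}$. Thus the main work is bookkeeping of the two geometric series in each of the four cases, and no additional ingredient beyond Proposition \ref{Lef_Moduli_Space2} and Theorem \ref{thm:non_weighted_point_m_sections} is required.
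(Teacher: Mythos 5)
Your proposal is correct and matches the paper's own proof essentially verbatim: both sum the non-weighted point counts from Theorem \ref{thm:non_weighted_point_m_sections} over $n=1,\ldots,\lfloor \log_q B/12\rfloor$ and evaluate the resulting geometric series, with the substitutions $q^{9N}=B^{3/4}$, $q^{6N}=B^{1/2}$ (and their analogues for $m=3,4,5$) giving the stated closed forms. The only difference is that the paper works out the $m=2$ case explicitly and declares the rest analogous, while you sketch all four uniformly; the content and the caveat on sharpness at $B=q^{12n}$ are the same.
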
 

    \begin{proof}
        Without the loss of the generality, we prove the $2$-marked $1$-stable case over $\mathrm{char}(\Fb_q) \neq 2,3$. The proof for the other cases are analogous. By Theorem~\ref{thm:non_weighted_point_m_sections}, we know that the number of $\Fb_q$-isomorphism classes of $1$-stable arithmetic genus one curves over $\Pb^1_{\Fb_q}$ with $2$-marked sections is $|\Lc^{m=2}_{12n,0}(\Fb_q)/\sim| = (q^{9n + 2} + q^{9n + 1} - q^{9n - 1} - q^{9n - 2}) + (q^{6n + 1} - q^{6n - 1})$. Using this, we can explicitly compute the sharp enumeration on $\Nc(\Fb_q(t),~m=2,~0 < q^{12n} \le B)$ as follows

        \begingroup
        \allowdisplaybreaks
        \begin{align*}
        \Nc(\Fb_q(t),~m=2,~0 < q^{12n} \le B) & = \sum \limits_{n=1}^{\left \lfloor \frac{log_q B}{12} \right \rfloor} |\Lc^{m=2}_{12n,0}(\Fb_q)/\sim| \\
        & = \frac{({q^{11} + q^{10} - q^{8} - q^{7}})}{(q^{9}-1)} \cdot ( B^{\frac{3}{4}} - 1) + \frac{(q^{7} - q^{5})}{(q^{6}-1)} \cdot ( B^{\frac{1}{2}} - 1)
        \end{align*}
        \endgroup
    \end{proof}

    Results over higher genus $C_{\Fb_q}$ where we acquire the corresponding stable \'etale cohomology followed by stable point counts as in Corollary \ref{Ell_Count} is straightforward.

    \bigskip


    \section*{Acknowledgements}
    The authors are indebted to Peter Scholze for helpful pointers and discussions, especially for his help with transferring proper descent to the world of Deligne-Mumford stacks, some of which made its way into the manuscript. Warm thanks to Dori Bejleri, Changho Han, David Hansen, Jesse Wolfson and Craig Westerland as well for earlier helpful discussions and to Benson Farb and Burt Totaro for suggesting edits that made the paper more readable. Oishee Banerjee is supported by Hausdorff Center of Mathematics, Bonn. Jun-Yong Park was supported by the Institute for Basic Science in Korea (IBS-R003-D1) and the Max Planck Institute for Mathematics. Johannes Schmitt was supported by the grant 184613 of the Swiss National Science Foundation.


    
    \vspace{+16 pt}

    \noindent Oishee Banerjee \\
    \textsc{Mathematisches Institut, Universit\"at Bonn, Endenicher Allee 60, \\ 53115 Bonn, Germany} \\
      \textit{E-mail address}: \texttt{oishee@math.uni-bonn.de}

    \vspace{+16 pt}

    \noindent Jun--Yong Park \\
    \textsc{Max-Planck-Institut f\"ur Mathematik, Vivatsgasse 7, \\ 53111 Bonn, Germany} \\
      \textit{E-mail address}: \texttt{junepark@mpim-bonn.mpg.de}

    \vspace{+16 pt}

    \noindent Johannes Schmitt \\
    \textsc{Institut f\"ur Mathematik, University of Z\"urich, Winterthurerstrasse 190, 8057 Z\"urich, Switzerland} \\
      \textit{E-mail address}: \texttt{johannes.schmitt@math.uzh.ch}

\end{document}